\newtheorem{fed}{Definition}[section]
\newtheorem{teo}[fed]{Theorem}
\newtheorem*{teo*}{Theorem}
\newtheorem{lem}[fed]{Lemma}
\newtheorem{cor}[fed]{Corollary}
\newtheorem{pro}[fed]{Proposition}
\theoremstyle{definition}
\newtheorem{rem}[fed]{Remark}
\newtheorem{conj}[fed]{Conjecture}
\newtheorem{exa}[fed]{Example}
\newtheorem{num}[fed]{}
\newcommand{\IN}[1]{\mathbb {I} _{#1}}
\def\In{\mathbb {I} _n}
\def\CC{\mathbb {C}}
\def\RR{\mathbb {R}}
\def\IM{\mathbb {I} _m}
\def\suml{\sum\limits}
\def\QEDP{\tag*{\QED}}
\def\ov{\overline}
\def\bce{\begin{center}}
\def\ece{\end{center}}
\newcommand{\trivial}{\{0\}}
\DeclareMathOperator{\FP}{RSP\,}
\def\gr{\text{\rm Gr }}
\def\cO{{\mathcal O}} 
\def\cD{\mathcal D}
\def\t{{\mathbf t}} 
\def\FS{\cN_w  = (w_i\, ,\, \cN_i)_{i\in \IM}}
\def\fori{for every $i \in \IM\,$}
\def\subim{_{i\in \IN{m}}\,}
\def\rk{\text{\rm rk}}
\def\noi{\noindent}
\def\cF{\mathcal F}
\def\QED{\hfill $\square$}
\def\EOE{\hfill $\triangle$}
\renewcommand{\qed}{\hfill $\square$}
\def\bdem{\begin{proof}}
\def\edem{\end{proof}}
\newcommand{\peso}[1]{ \quad \text{ #1 } \quad }
\def\uno{\mathds{1}}
\def\bm{\left[\begin{array}}
\def\em{\end{array}\right]}
\def\ben{\begin{enumerate}}
\def\een{\end{enumerate}}
\def\bit{\begin{itemize}}
\def\eit{\end{itemize}}
\def\barr{\begin{array}}
\def\earr{\end{array}}
\def\igdef{\ \stackrel{\mbox{\tiny{def}}}{=}\ }
\def\k{\mathbf{k}}
\def\v{\mathbf{v}}
\def\x{\mathbf{x}}
\def\eps{\varepsilon}
\def\la{\lambda}
\def\al{\alpha}
\def\N{\mathbb{N}}
\def\R{\mathbb{R}}
\def\C{\mathbb{C}}
\def\cA{\mathcal{A}}
\def\cI{\mathcal{I}}
\def\cE{\mathcal{E}}
\def\cH{\mathcal{H}}
\def\cK{\mathcal{K}}
\def\cP{\mathcal{P}}
\def\cQ{\mathcal{Q}}
\def\cR{{\cal R}}
\def\cS{{\cal S}}
\def\cT{{\cal T}}
\def\cM{{\cal M}}
\def\cN{{\cal N}}
\def\cV{{\cal V}}
\def\cU{{\cal U}}
\def\cW{{\cal W}}
\def\ese{\mathcal{S}}
\def\ete{\mathcal{T}}
\def\eme{\mathcal{M}}
\def\ene{\mathcal{N}}
\def\vacio{\emptyset}
\def\orto{^\perp}
\def\inc{\subseteq}
\def\Inc{\supseteq}
\def\sii{ if and only if }
\def\inv{^{-1}}
\def\*A{\#\sb A}
\def\rai{^{1/2}}
\def\api{\langle}
\def\cpi{\rangle}
\def\inv{^{-1}}
\DeclareMathOperator{\Preal}{Re} 
 \DeclareMathOperator{\tr}{tr}
\DeclareMathOperator{\gen}{span}
\def\SP{L(m,\k,d)}
\def\RS{\cR \cS }
\def\RSv{\cR \cS_\v \,}
\def\OPv{\cO\cP_\v}
\def\LOPv{\Lambda(\OPv )}
\def\PRO{\cP \cR\cS }
\def\RSO{\mbox{\rm RSO}} 
\def\PROv{\cP \cR\cS _\v\, }
\def\DRSv{\cD\cP_\v\, }
\def\RSV{\cV= \{V_i\}_{i\in \, \IN{m}}}
\def\RSU{\cU= \{U_i\}_{i\in \, \IN{m}}}
\def\RSW{\cW= \{W_i\}_{i\in \, \IN{m}}}
\def\RSF{\cF= \{f_i\}_{i\in \, \IN{m}}}
\newcommand{\hil}{\mathcal{H}}
\newcommand{\op}{L(\mathcal{H})}
\newcommand{\lhk}{L(\mathcal{H} , \mathcal{K})}
\newcommand{\lkh}{L(\mathcal{K} , \mathcal{H})}
\newcommand{\posop}{L(\mathcal{H})^+}
\def\H{{\cal H}}
\def\lh{{L(\H)}}
\def\lh+{{\lh^+}}
\def\glh{\mathcal{G}\textit{l}\,(\cH)}
\def\glk{\mathcal{G}\textit{l}\,(\cK)}
\newcommand{\cene}{\mathbb{C}^n}
\newcommand{\mat}{\mathcal{M}_d(\mathbb{C})}
\newcommand{\matn}{\mathcal{M}_n(\mathbb{C})}
\newcommand{\matsa}{\mathcal{H}(n)}
\newcommand{\matu}{\mathcal{U}(n)}
\newcommand{\matpos}{\mat^+}
\newcommand{\matposn}{\matn^+}
\newcommand{\matnpos}{\matn^+}
\newcommand{\matinv}{\mathcal{G}\textit{l}\,(n)}
\newcommand{\matinvd}{\mathcal{G}\textit{l}\,(d)}
\def\gld{\matinvd^+}
\newcommand{\matrec}[1]{\mathcal{M}_{#1} (\mathbb{C})}
\def\beq{\begin{equation}}
\def\eeq{\end{equation}}
\def\pausa{\medskip\noi}
\begin{document}

\title{{\bf Duality in  reconstruction systems}
\footnote{Keywords: Fusion frames, reconstruction systems, dual  reconstruction systems.}
\footnote{Mathematics subject classification (2000): 42C15, 15A60.}}
\author{Pedro G. Massey, Mariano A. Ruiz  and Demetrio Stojanoff\thanks{Partially supported by CONICET 
(PIP 5272/05) and  Universidad de La PLata (UNLP 11 X472).} }
\author{P. G. Massey, M. A. Ruiz and D. Stojanoff \\ {\small Depto. de Matem\'atica, FCE-UNLP,  La Plata, Argentina
and IAM-CONICET  
}}
\date{}

\maketitle

\begin{abstract} We consider the notion of finite dimensional reconstructions 
systems (RS's), which includes  the fusion frames as projective RS's. 
We study  erasures, some geometrical properties of these spaces, 
the spectral picture of the set of all dual systems of a fixed RS, 
the spectral picture of the set of RS operators for the projective systems
with fixed weights and  the structure
of the minimizers of the joint potential in this setting.  We give several examples. 
\end{abstract}

\tableofcontents

\def\coma{\, , \, }

\bigskip

\section{Introduction} 
In this paper we study the notion of finite dimensional reconstruction systems,
which gives a new framework for fusion and vector frames. 
Fusion frames (briefly FF's) were introduced 
under the name of ``frame of subspaces'' in \cite{[CasKu]}. They arise naturally as a generalization of the usual frames of vectors for a Hilbert space $\hil$.
Several applications of FF's have been studied, for example, 
sensor networks \cite{[CasKuLiRo]}, neurology \cite{RJ}, 
coding theory \cite{Bod}, \cite {Pau}  , \cite{GK}, among others. 
We refer the reader to 
\cite{[CasKuLi]} and the references therein  
for a detailed treatment of the FF theory. Further developments 
can be found in  \cite{CF}, \cite {[CasKu2]} and \cite{RS}. 

Given $m \in \N$ we denote by $\IM = \{1, \dots , m\} \inc \N$. 
In the finite dimensional setting, a FF is a sequence 
$\FS$ where each $w_i \in \R_{>0}$ and  the $\ene_i\inc \C^d$ are subspaces that 
generate $\C^d$. 
The synthesis operator of  $\ene_w$ is usually defined as 
$$
\barr{rl}
T_{\ene_w} :\cK_{\ene_w} \igdef \bigoplus\subim \ene_i \to 
\C^d & \peso{given by} T_{\ene_w} (x_i)\subim = \sum \subim w_i \, x_i \ .
\earr
$$ 
Its adjoint, the so-called analysis operator of $\cN_w\,$, is given  
by $T_{\ene_w}^* y = (w_i \, P_{\ene_i} \, y)\subim$ for 
$y\in \C^d$, where $P_{\ene_i}$ denotes the orthogonal projection onto $\ene_i\,$. 
The frame $\ene_w$ induces a linear 
encoding-decoding scheme that can be described in terms of these operators.

The previous setting for the theory of FF's presents some technical difficulties. For example the domain of $T_{\ene_w}$ 
relies strongly on the subspaces of the fusion frame.  
In particular, any change on the subspaces modifies the domain 
of the operators preventing smooth perturbations of these objects.
Moreover, this kind of rigidity on the definitions implies that the 
notion of a dual FF is not clear. 

An alternative approach to the fusion frame (FF) theory comes from the theory of protocols introduced in \cite{Bod} and the theory of reconstruction systems considered in \cite {MRS} and \cite{P}.
In this context, we fix the dimensions 
$\dim \ene_i = k_i$ and consider a universal space 
$$\cK = \cK_{m\coma \k} \igdef  \bigoplus_{i\in \, \IN{m}}  \ \C^{k_i} \ , 
\peso{where} \k = (k_1 \coma \dots \coma k_m)   \in \N^m \ .
$$
A reconstruction system (RS) is a sequence  
$\RSV$ such that $V_i \in L(\C^d\coma \C^{k_i})$ \fori, 
which allows the construction of an encoding-decoding algorithm 
(see Definition \ref {defi recons} for details). We denote by 
$\RS= \RS(m,\k,d)$ the set of all RS's with these fixed parameters. 
Observe that, if  $\FS$ is a FF, it can be modeled as a system 
$\RSV \in \RS$ such that 
$V_i^*V_i = w_i^2 \, P_{\mathcal N_i}\,$ \fori. 
These systems are called projective RS's. 

On the other hand, a general RS arise from 
a usual vector frame by grouping together the elements of the frame. Thus, the 
coefficients involved in the 
encoding-decoding scheme of RS are vector valued, and they lie in the space $\cK$.

The main advantage of the RS framework with respect to the fusion 
frame formalism is that each (projective) RS  has many RS's that 
are {\it dual} systems. In particular, the canonical dual RS
remains being a RS (for details and definitions see Section \ref{basic}). 
In contrast, it is easy to give examples of a FF
such that its canonical dual is not a fusion frame. 
There exists a notion of duality among fusion frames defined by Gavruta 
(see \cite{Gav}), where the reconstruction formula
of a fixed $\cV$ involves the FF operator $S_\cV$ of $\cV$. 
Nevertheless,  
in the context of RS's, we show that the notion of dual systems  
can be described and characterized in a quite natural way. 
On the other hand, the RS framework (see Section 2 for a detailed description) 
allows to make not only a metric but 
also a differential geometric study of the set of RS's, which will be developed in 
Section 4 of this paper.

Let us fix the parameters $(m,\k,d)$ and the sequence 
$\v=(v_i)_{i\in \IM}\in \R_{>0}^m\,$ of weights. 
In this work we study some properties of 
the sets $\RS = \RS(m,\k,d)$ of RS's and $\PROv = \PROv(m,\k,d)$ of projective 
systems with fixed weights $\v$.  
In section 3 we study erasures in this context. We  show conditions  
which guarantee that, after erasing some of its components, 
the system keeps being a RS, and we exhibit adequate bounds for it. 
In section 4 we present  a geometrical description of both sets 
$\RS $ and $\PROv$, and give a sufficient  condition 
(the notion of irreducible systems) 
in order that the operation of taking RS operators 
$\PROv\ni \cV \mapsto S_\cV$ (see Definition \ref {defi recons}) has smooth local 
cross sections. In section 5 we study the spectral picture of the set $\cD(\cV)$ of all dual systems
for a fixed $\cV \in \RS$, and the set $\OPv$ of the RS operators of all systems in $\PROv$. 

Finally, in section 6 we focus on the main problem of the paper, which needs 
the results of the previous sections:  
Let $\DRSv  \igdef \big\{ \, (\cV,\,\cW) \in  \PROv \times \RS \ : \ 
\cW\in \cD(\cV) \, \big\}$.
We look for  pairs $(\cV \coma \cW) $ 
which have the best minimality properties. If there exist tight systems in $\PROv\,$
(systems whose RS operator is a multiple of the identity) then the pair $(\cV \coma \cV^\#)$
is minimal, where $\cV^\#$ is the canonical dual of $\cV$ (see Defintion \ref{CanDua}). 
Nevertheless, this is not always the case (see \cite {comp} or \cite {MRS}). 
Therefore we  define a joint RS potential given by 
$\DRSv \ni (\cV \coma \cW) \mapsto \FP(\cV \coma \cW) 
= \tr \, S_\cV^2 + \tr \, S_\cW^2 
\in \R_{>0}\,$, which is similar to the potential used in \cite {Phys} for 
 vector frames. 
The minimizers of  $\FP$ are 
those pairs which are the best analogue of a tight pair. 
The main results in this direction are that: 
\bit
\item There exist $\la_\v = \la_\v(m,\k,d) \in \R_{>0}^d$ such that a pair 
$(\cV \coma \cW) \in \DRSv$ is a minimizer 
for the $\FP$  \sii $\cW = \cV^\#$ and the vector of eigenvalues 
$\la(S_\cV) = \la_\v\,$. 
\item Every such  $\cV$ can be decomposed as a orthogonal sum 
of tight projective RS's, where the quantity of components and their tight constants 
are the same for every minimizer. 
\eit
In section 7 we give some examples of these problems, 
showing sets of parameters for which the vector $\la_\v$ and
all minimizers $\cV \in \PROv$ can be explicitly computed. We also present
a conjecture which suggest an easy way to compute the vector $\la_\v\,$, as 
the minimal element in the spectral picture of $\OPv$ with respect 
to the majorization (see Conjecture \ref{la conj}).

\subsection*{General notations.}
Given $m \in \N$ we denote by $\IM = \{1, \dots , m\} \inc \N$ and 
$\uno = \uno_m  \in \R^m$ denotes the vector with all its entries equal to $1$. 
For a vector $x\in \R^m$ we denote by $x^\downarrow$ the rearrangement
of $x$ in a decreasing order, and $(\R^m)^\downarrow = \{ x\in \R^m : x = x^\downarrow\}$
the set of ordered vectors. 

\pausa 
Given $\cH \cong \C^d$  and $\cK \cong \C^n$, we denote by $\lhk $ 
the space of linear operators $T : \cH \to \cK$. 
Given an operator $T \in \lhk$, $R(T) \inc \cK$ denotes the
image of $T$, $\ker T\inc \cH$ the null space of $T$ and $T^*\in \lkh$ 
the adjoint of $T$. If $d\le n$ we say that $U\in \lhk$ is an isometry 
if $U^*U = I_\cH\,$. In this case, $U^*$ is called a coisometry. 
If $\cK = \cH$ we denote by $\op = L(\cH \coma \cH)$, 
by $\glh$ the group of all invertible operators in $\op$, 
 by $\posop $ the cone of positive operators and by
$\glh^+ = \glh \cap \posop$. 
If $T\in \op$, we  denote by   
$\sigma (T)$ the spectrum of $T$, by rk $T $  the rank of $T$,
and by $\tr T$ the trace of $T$. By fixing orthonormal basis (onb) 
of the Hilbert spaces involved, we shall identify operators with 
matrices, using the following notations:

\pausa 
By $\matrec{n,d} \cong L(\C^d \coma \C^n)$ we denote the space of complex $n\times d$ matrices. 
If $n=d$ we write $\matn = \matrec{n,n}$.  
$\matsa$ is the $\R$-subspace of selfadjoint matrices,  
$\matinv$ the group of all invertible elements of $\matn$, $\matu$ the group 
of unitary matrices, 
$\matposn$ the set of positive semidefinite
matrices, and $\matinv^+ = \matposn \cap \matinv$. 
If $d\le n$, we denote by $\cI(d\coma n) \inc 
\matrec{n\coma d}$ the set of isometries, i.e. those 
$U\in \matrec{n\coma d} $ such that $U^*U = I_d\,$.

\pausa
If $W\inc \cH$ is a subspace we denote by $P_W \in \posop$ the orthogonal 
projection onto $W$, i.e. $R(P_W) = W$ and $\ker \, P_W = W^\perp$. 
For vectors on $\cene$ we shall use the euclidean norm.  
On the other hand,  for matrices $T\in \matn$ we shall use both 
\begin{enumerate} 
\item The spectral norm $\|T\| = \|T\|_{sp}= \max\limits_{\|x\|=1}\|Tx\|$. 
\item The Frobenius norm $\|T\|_{_2} = (\tr \, T^*T )\rai = 
\big( \, \suml_{i,j \in \In } |T_{ij}|^2 \, \big)\rai$. This norm  is induced by the inner product
$\api A,\ B\cpi=  \tr \, B^*A \,$,  for $A, B \in \matn$. 

\end{enumerate}

\section{Basic framework of reconstruction systems}\label{basic}

In what follows we consider $(m,\k,d)$-reconstruction systems, which
are more general linear systems than those considered in
\cite{BF}, \cite{Bod}, \cite{Pau}, \cite{BodPau},  \cite{HolPau} and
\cite{MR}, that also have an associated reconstruction algorithm.  

\begin{fed}\label{defi recons}\rm
Let $m, d \in \N$ and $\k = (k_1 \coma \dots \coma k_m)   \in \N^m $. 
\ben 
\item We shall abbreviate the above description by saying that  $(m, \k , d)$
is a set of parameters. We denote by $n = \tr \, \k \igdef \sum_{i\in \, \IN{m}} k_i$
and assume that $n\ge d$. 
\item We denote by $
\cK = \cK_{m\coma \k} \igdef  \bigoplus_{i\in \, \IN{m}}  \ \C^{k_i}\cong \C^n $.
We shall often write each direct summand by $\cK_i = \C^{k_i}\,$. 

\item Given a space $\cH \cong \C^d$ we denote by 
$$\SP \igdef \bigoplus_{i\in \IM} L(\cH \coma \cK_i) \cong \lhk 
\cong \bigoplus_{i\in \, \IN{m}}  \ \matrec{k_i\coma d}\cong \matrec{n,d}\ .
$$ 
A typical  element 
of $\SP$ is a system $\RSV $ such that 
each $V_i \in L(\cH \coma \cK_i)$. 
\item
A family $\cV= \{V_i\}_{i\in \, \IN{m}}\in \SP$  is an 
$(m,\k,d)$-reconstruction system (RS)
for $\cH$ if
\beq\label{S de RS}
S_\cV \igdef \sum\limits_{i\in \, \IN{m}} V_i^*V_i  \in \glh^+\, ,
\eeq
i.e., if $S_\cV$ is  invertible. $S_\cV$ is called the {\bf RS operator} of $\cV$. In this case, 
the $m$-tuple  $\k = (k_1 \coma \dots \coma k_m)   \in \N^m $ satisfies that 
$n= \tr \k  \ge d$. 

We shall denote by $\RS = \RS(m,\k,d)$ the set of all $(m,\k,d)$-RS's for $\cH\cong \C^d$. 

\item
The system $\cV $  is said to be {\bf projective} if 
there exists a sequence  $\v= (v_i)_{i \in \IN{m}}\in \R_{+}^m $ 
of positive numbers, the 
weights of $\cV$, such that 
$$
V_i \, V_i^* = v_i^2 \, I_{\cK_i} \ , \peso{for every} i \in \IN{m} \ . 
$$
In this case, the following properties hold: 
\ben
\item The weights can be computed directly, since each  $v_i = \|V_i\|_{sp}\ $. 
\item Each $V_i = v_i U_i$ for a  coisometry $U_i \in L(\cH \coma \cK_i)$. 
Thus $V_i^*V_i=v_i^2\, 
P_{R(V_i^*)} \in \posop$ \fori.  

\item 
$S_\cV = \sum_{i\in \, \IN{m}} v_i^2 \,P_{R(V_i^*)}$ as in fusion frame theory. 
\een
We shall denote by  $\PRO = \PRO(m,\k,d)$ the set of all projective elements of $\RS$.

\item
The {\bf analysis} operator of the system $\cV$ is defined by 
$$
T_\cV \ : \ \cH\rightarrow
\cK  =
 \bigoplus_{i\in \, \IN{m}}  \cK_i\ \ \text{given by } \ \ T_\cV\, x= (V_1 \, x \coma \dots \coma V_m\, x)  
\ , \peso{for} x\in \cH \ .
$$ 
\item 
Its adjoint  $T_\cV^*$ is called the {\bf synthesis} operator of the system $\cV$, and it satisfies that
$$
T_\cV^* : \cK = \bigoplus_{i\in \, \IN{m}}  \cK_i \rightarrow \cH  \peso{is given by}
T_\cV ^* \, \big(\,  (y_i)_{i\in \, \IN{m}}\big) =\sum_{i\in \, \IN{m}} V_i^*\, y_i \ . 
$$
Using the previous notations and definitions we have that $S_\cV = T_\cV^*\  T_\cV\, $. 
\item The frame constants in this context are the following: $\cV$ is a RS \sii
\beq\label{const RS}
A_\cV \, \|x\|^2 \, \le \, \api S_\cV \, x\coma x\cpi \, = \sum\subim \, \|V_i \, x \|^2  \,
 \,  \le \, B_\cV \, \|x\|^2 
\eeq
for every $x\in \cH$, 	
where $0< A_\cV   =\la_{\min} (S_\cV) = \|S_\cV\inv \| \inv\le 
\la_{\max} (S_\cV) = \|S_\cV \| = B_\cV \,$. 
\item As usual, we say that 
$\cV$ is {\bf tight} if $A_\cV = B_\cV\,$. In other words, the system 
$\cV \in \RS(m,\k,d)$ is 
tight if and only if $S_\cV = \frac{\tau}{d}  \, I_\H\,$, where $\tau = \sum\subim v_i^2 \, k_i\,$. 
\item The Gram matrix of $\cV$ is  $G_\cV = T_\cV\, T_\cV^* \in 
L(\cK)^+ \cong  \matnpos$,  where the size of $G_\cV$ viewed 
as a matrix is $n = \tr \, \k = \sum\subim \, k_i = \dim \cK$. 
\item Given $U \in \matinvd$, we define $\cV\cdot U  \igdef 
\{V_i\,U\}_{i\in \, \IN{m}} \in \RS(m,\k,d)$.  \EOE
\een
\end{fed}

\begin{rem}
Let $\RSV \in \RS$ such that every $V_i \neq 0$.  In case that $\k = \uno_m\,$, 
then $\cV$ can be identified with a vector frame, since each $V_i : \C^d \to \C$ is in fact a 
vector $0\neq f_i \in \C^d$. In the same manner, the projective RS's can be seen as fusion frames. 
Here the identification is given by 
$V_i \simeq \big(\,\|V_i\| \coma R(V_i^*)\,\big)$ \fori. 
\EOE

\end{rem}

\begin{fed}\label{CanDua} \rm
For every $\RSV\in \RS(m,\k,d)$, we define the system 
$$
\cV^\# \igdef \cV\cdot S_\cV ^{-1} =  \{V_i\,S_\cV ^{-1}\}_{i\in \, \IN{m}} \in \RS(m,\k,d) \ , 
$$
called  the {\bf canonical dual} RS associated to $\cV$.  
\EOE
\end{fed}

\begin{rem} 
Given $\RSV \in \RS$ with 
 $S_\cV = \sum_{i\in \, \IN{m}} V_i^*V_i\,$,  then 
\begin{equation}\label{hecho1}
\sum_{i\in \, \IN{m}}S_\cV\, ^{-1}\, V_i^*V_i=I_\cH  \ ,
\ \ \text{ and } \ \ \sum_{i\in \, \IN{m}} V_i^*V_i\ S_\cV\, ^{-1}=I_ \cH \ . 
\end{equation} 
Therefore, we obtain the reconstruction formulas
\begin{equation}\label{ec recons}
x= \sum_{i\in \, \IN{m}} S_\cV^{-1} \, V_i^* \, (V_i \,x) 
= \sum_{i\in \, \IN{m}} V_i^* \,V_i(S_\cV^{-1}\, x) \peso{for every} x\in \cH \ .
\end{equation} 
Observe that, by Eq. \eqref{hecho1}, we see that 
the canonical dual $\cV^\#$ satisfies that 
\beq\label{SMP}
T_{\cV^\#}^* \, T_\cV = \sum_{i\in \, \IN{m}}S_\cV\, ^{-1}\, V_i^*V_i=I_\cH 
\peso{and} 
S_{\cV^\#} = \sum_{i\in \, \IN{m}}S_{\cV}^{-1}\, V_i^*\, V_i \, S_{\cV}^{-1} = S_{\cV}^{-1}\ 
\ . 
\eeq
Next we generalize the notion of dual RS's\,: 
\EOE
\end{rem}

\begin{fed}\label{DV} \rm 
Let $\RSV$ and $\RSW \in \RS$.  
We   say that $\cW$ is a {\bf dual} RS 
for $\cV$ if $ T_\cW^*\, T_\cV = I_\cH\, $, 
or equivalently if 
$x  =\sum_{i\in \, \IN{m}} W_i^* \,V_i \,x  $ for every $x\in \H $.

\pausa
We denote the set of all dual RS's for a fixed $\cV \in \RS$ by  
$
\cD(\cV) \igdef \{\cW\in \RS: T_\cW^*\, T_\cV = I_\cH\,\}  \ .
$   
Observe that  $\cD(\cV) \neq \vacio$ since $\cV^\#\in \cD(\cV) $. 
\EOE
\end{fed}

\begin{rem} \label{MP sost}
Let $\cV\in \SP$. Then $\cV  \in \RS \iff T_\cV^*$ is surjective. 
In this case, a system  $\cW\in \cD(\cV)$ \sii  its synthesis 
operator $T_\cW^*$ is a pseudo-inverse of $T_\cV\,$. 
Indeed,  $\cW\in \cD(\cV) \iff T_\cW^*\, T_\cV = I_\cH\,$. Observe 
that the map $\RS \ni \cW \mapsto T_\cW^*$ is one to one. 
Thus,   in the context of RS's   each $(m,\k,d)$-RS has many duals that are $(m,\k,d)$-RS's. 
This is one of the advantages of the RS's setting.

\pausa
Moreover,  
the synthesis operator  $T_{\cV^\#}^*$ of the canonical dual $\cV^\#$ 
corresponds to the Moore-Penrose pseudo-inverse of $T_\cV\,$. Indeed, notice that 
 $T_{\cV}\,T_{\cV^\#}^* = T_{\cV}\, S_{\cV}\inv T_{\cV}^* 
 \in L(\cK)^+$, so that it  
 is an orthogonal projection. 
From this point of view, the canonical dual $\cV^\#$ has some optimal properties that come from the theory of pseudo-inverses.

\pausa
On the other hand the map $\SP \ni \cW \mapsto T_\cW^* \in \lkh$ is $\R$-linear. 
Then, for every $\cV\in \RS$,  the set $\cD(\cV)$ 
of dual systems is convex in $\SP$, because the 
set of pseudoinverses of $T_\cV$ is convex in $\lkh$. 
\EOE

\end{rem}

\section{Erasures and lower bounds.}
	
It is a known result in frame theory that, for a given frame $\cF=\{f_i\}_{i\in I}$, the set $\cF'=\{f_i\}_{i\in I, \, i\neq j}$ is either a frame or a incomplete set for $\hil$.
In \cite{[CasKu2]} P. Casazza and G. Kutyniok give examples where this situation does not occur in the fusion frame setting. Considering fusion frames as a particular case of  reconstruction systems we can rephrase their result in the following way: 

\begin{teo}[Casazza and Kutyniok]\label{teo CasKu}\rm
Let $\cV=\{V_i\}_{i \in \IM}\in \PRO$ with bounds $A_\cV \coma B_\cV$. 
If $\sum_{i\in J} \|V_i\|^2 <A_\cV\,$ then  the sequence 
$\cV_J\igdef \{V_i\}_{i \in \IM\setminus J}$ is a projective RS for $\hil \cong \C^d$ 
with bounds $A_{\cV_J} \ge A_\cV-\sum_{i\in J} \|V_i\|^2$ and $B_{\cV_J} \le B_\cV\,$. 
\end{teo}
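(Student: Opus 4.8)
The plan is to reduce everything to one operator identity together with a positivity estimate. Since $\IM = \{1,\dots,m\}$ is partitioned as $J \sqcup (\IM\setminus J)$, the defining formula \eqref{S de RS} for the RS operator gives
\[
S_{\cV_J} \ = \ \sum_{i\in\IM\setminus J} V_i^*V_i \ = \ S_\cV - \sum_{i\in J} V_i^*V_i \ .
\]
First I would bound the discarded term. For each $i$ one has $V_i^*V_i \le \|V_i\|_{sp}^2\, I_\cH = \|V_i\|^2\, I_\cH$ (for a projective system this is the equality $V_i^*V_i = v_i^2 P_{R(V_i^*)}$ with $v_i=\|V_i\|$ of Definition \ref{defi recons}), so summing over $i\in J$ yields
\[
0 \ \le \ \sum_{i\in J} V_i^*V_i \ \le \ \Big(\sum_{i\in J}\|V_i\|^2\Big)\, I_\cH\ .
\]

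From here both bounds are immediate. The left inequality gives $S_{\cV_J} \le S_\cV$, hence $\lmax{S_{\cV_J}} \le \lmax{S_\cV}$, that is, $B_{\cV_J}\le B_\cV$. Combining $S_\cV \ge A_\cV\, I_\cH$ with the right inequality gives
\[
S_{\cV_J} \ = \ S_\cV - \sum_{i\in J} V_i^*V_i \ \ge \ \Big(A_\cV - \sum_{i\in J}\|V_i\|^2\Big)\, I_\cH\ ,
\]
and the hypothesis $\sum_{i\in J}\|V_i\|^2 < A_\cV$ forces the scalar on the right to be strictly positive, so $S_{\cV_J}\in\glh^+$. By \eqref{S de RS} this says precisely that $\cV_J$ is a reconstruction system, and reading off $\lmin{S_{\cV_J}}$ gives the lower bound $A_{\cV_J}\ge A_\cV - \sum_{i\in J}\|V_i\|^2$.

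It remains to check that $\cV_J$ is \emph{projective}, and this needs no extra work: the relation $V_iV_i^* = v_i^2\, I_{\cK_i}$ concerns each single index $i$ and therefore survives restriction to $\IM\setminus J$; thus $\cV_J \in \PRO(m',\k',d)$ with $m' = m-|J|$, $\k' = (k_i)_{i\in\IM\setminus J}$ and weights $(v_i)_{i\in\IM\setminus J}$ (and $\tr\k' \ge \rk S_{\cV_J} = d$ automatically). I do not expect a genuine obstacle here: the whole content is the splitting $S_{\cV_J} = S_\cV - \sum_{i\in J}V_i^*V_i$ together with the estimate $\sum_{i\in J}V_i^*V_i \le \big(\sum_{i\in J}\|V_i\|^2\big)\, I_\cH$, and the only point worth stating carefully is that the latter is a true operator inequality on $\cH$ (not merely a bound on norms), which is exactly what lets one subtract the scalar $\sum_{i\in J}\|V_i\|^2$ uniformly from the whole spectrum of $S_\cV$.
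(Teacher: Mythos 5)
Your proof is correct, but it is worth pointing out that the paper never proves Theorem \ref{teo CasKu} directly: it is quoted as a known result of Casazza and Kutyniok, and the paper instead recovers it (with a sharper bound) as a consequence of its own Theorem \ref{cotas nosiglias}. That route runs through the matrix $M_J = I_d - \sum_{i\in J} V_i^*V_i\,S_\cV^{-1}$: one shows $S_{\cV_J} = M_J\, S_\cV$, observes that $\|I_d - M_J\| \le \|\sum_{i\in J} V_i^*V_i\|\,\|S_\cV^{-1}\| < 1$ under the hypothesis, so $M_J$ is invertible by a Neumann-series argument, and then reads off $A_{\cV_J} \ge A_\cV/\|M_J^{-1}\| \ge A_\cV - \|\sum_{i\in J}V_i^*V_i\| \ge A_\cV - \sum_{i\in J}\|V_i\|^2$. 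Your argument is the direct one: the identity $S_{\cV_J} = S_\cV - \sum_{i\in J}V_i^*V_i$ together with the operator inequality $0 \le \sum_{i\in J}V_i^*V_i \le \big(\sum_{i\in J}\|V_i\|^2\big) I_\cH$ immediately sandwiches $S_{\cV_J}$ between $\big(A_\cV - \sum_{i\in J}\|V_i\|^2\big)I_\cH$ and $S_\cV$, and projectivity of the truncated system is tautological. Your approach is more elementary and is surely the intended original proof; what the paper's detour through $M_J$ buys is generality (non-projective systems, a weaker hypothesis in terms of $\|\sum_{i\in J}V_i^*V_i\|$ rather than $\sum_{i\in J}\|V_i\|^2$) and the strictly better lower bound $A_\cV/\|M_J^{-1}\|$, as quantified in Proposition \ref{pro era} and the remark following it. All the individual steps you wrote are sound, including the observation that $\tr\,\k' \ge \rk\, S_{\cV_J} = d$ follows automatically from invertibility of $S_{\cV_J}$.
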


\pausa
As they notice in \cite{[CasKu2]} with an example, this is not a necessary condition.
On the other side, in \cite{As}, M. G. Asgari  proves that, 
under certain conditions, a single element can be erased from 
the original fusion frame (in our setting, a projective RS), 
and he obtains different lower bounds for the resulting reconstruction system: 

\begin{teo}[Asgari]\label{teo As}\rm
Let $\cV=\{V_i\}_{i \in \IM}\in \PRO$ with bounds $A_\cV \coma B_\cV$. 
Suppose that there exists $j\in \IM$ such that
$M_j \igdef I_d - V_j^*V_j S_\cV^{-1} \in \matinvd$, 
then $\cV^j=\{V_i\}_{i \neq j}$ is a projective RS for 
$\hil \cong \C^d$ with bounds $A_{\cV_J} \ge \frac{A_\cV^2}{A_\cV+\|V_j\|^2\|M_j^{-1}\|^2}$ 
and $B_{\cV_J} \le B_\cV\,$.
\end{teo}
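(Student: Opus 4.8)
\emph{Plan.} The plan is to work entirely with the RS operator of the erased system, $S_{\cV^j}=\sum_{i\neq j}V_i^*V_i=S_\cV-V_j^*V_j$, together with a Sherman--Morrison--Woodbury formula for $S_{\cV^j}^{-1}$. First, $\cV^j$ is automatically projective, since $V_iV_i^*=v_i^2\,I_{\cK_i}$ still holds for every $i\neq j$. To see that it is a RS, write $S_{\cV^j}=S_\cV-V_j^*V_j=(I_d-V_j^*V_jS_\cV^{-1})\,S_\cV=M_j\,S_\cV$: as a product of two invertible operators it is invertible, and as $\sum_{i\neq j}V_i^*V_i$ it is positive semidefinite, hence $S_{\cV^j}>0$ and $\cV^j\in\PRO$. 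The upper bound is immediate: $S_\cV-S_{\cV^j}=V_j^*V_j\ge 0$ gives $B_{\cV^j}=\lmax{S_{\cV^j}}\le\lmax{S_\cV}=B_\cV$.

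For the lower bound I would introduce the ``compression'' $R_j:=I_{\cK_j}-V_jS_\cV^{-1}V_j^*\in L(\cK_j)$. It is invertible: by $\sigma(AB)\cup\{0\}=\sigma(BA)\cup\{0\}$ (with $A=V_jS_\cV^{-1/2}$ and $B=A^*$) one has $1\in\sigma(V_jS_\cV^{-1}V_j^*)\Leftrightarrow 1\in\sigma(S_\cV^{-1/2}V_j^*V_jS_\cV^{-1/2})$, and the last matrix is similar to $V_j^*V_jS_\cV^{-1}=I_d-M_j$, so this is equivalent to $M_j$ being non-invertible, which is excluded. One then has the resolvent identity
\[
S_{\cV^j}^{-1}=(S_\cV-V_j^*V_j)^{-1}=S_\cV^{-1}+S_\cV^{-1}V_j^*\,R_j^{-1}\,V_jS_\cV^{-1},
\]
checked elementarily by solving $S_{\cV^j}x'=x$: putting $u:=V_jx'$ and applying $V_j$ one gets $R_j\,u=V_jS_\cV^{-1}x$, hence $u=R_j^{-1}V_jS_\cV^{-1}x$ and $x'=S_\cV^{-1}x+S_\cV^{-1}V_j^*u$.

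The crucial step is the estimate $\|R_j^{-1}\|\le\|M_j^{-1}\|$. Left-multiplying $R_j$ by $V_j^*$ gives $V_j^*R_j=V_j^*-V_j^*V_jS_\cV^{-1}V_j^*=M_jV_j^*$, whence $V_j^*R_j^{-1}=M_j^{-1}V_j^*$. Since $\cV$ is projective, $V_jV_j^*=v_j^2\,I_{\cK_j}$ with $v_j=\|V_j\|$, so $\|V_j^*w\|=v_j\|w\|$ for every $w\in\cK_j$; applying this to $w$ and to $R_j^{-1}w$ yields
\[
v_j\,\|R_j^{-1}w\|=\|V_j^*R_j^{-1}w\|=\|M_j^{-1}V_j^*w\|\le\|M_j^{-1}\|\,v_j\,\|w\|,
\]
which gives the claim (the case $V_j=0$ being trivial). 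Taking spectral norms in the resolvent identity and using $\|S_\cV^{-1}\|=A_\cV^{-1}$ and $\|V_j\|=\|V_j^*\|=v_j$,
\[
\|S_{\cV^j}^{-1}\|\le\frac{1}{A_\cV}+\frac{\|V_j\|^2}{A_\cV^2}\,\|R_j^{-1}\|\le\frac{A_\cV+\|V_j\|^2\,\|M_j^{-1}\|}{A_\cV^2},
\]
so $A_{\cV^j}=\|S_{\cV^j}^{-1}\|^{-1}\ge A_\cV^2\big(A_\cV+\|V_j\|^2\|M_j^{-1}\|\big)^{-1}$. Finally $\|M_j^{-1}\|\ge 1$ (the eigenvalues of $M_j$ are of the form $1-\mu$ with $\mu\in\sigma(S_\cV^{-1/2}V_j^*V_jS_\cV^{-1/2})\subseteq[0,1]$, since $V_j^*V_j\le S_\cV$; hence those of $M_j^{-1}$ are $\ge 1$), so the denominator is $\le A_\cV+\|V_j\|^2\|M_j^{-1}\|^2$ and the stated bound follows.

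The main obstacle is the estimate $\|R_j^{-1}\|\le\|M_j^{-1}\|$: one must realize that the right object to control sits on the small space $\cK_j$ via $R_j$, and this is exactly where projectivity enters essentially --- it makes $V_j^*$ a scalar multiple of an isometry, which lets one transport the norm bound for $M_j^{-1}$ on $\cH$ to that of $R_j^{-1}$ on $\cK_j$. Verifying the invertibility of $R_j$ from that of $M_j$ through the $\sigma(AB)=\sigma(BA)$ identity is the other point that needs some care.
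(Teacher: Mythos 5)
Your proof is correct, but it takes a genuinely different route from the one the paper uses. The paper does not prove Asgari's statement directly: it first proves Theorem \ref{cotas nosiglias}, which treats an arbitrary subset $J\inc\IM$ and arbitrary (not necessarily projective) systems via the one-line observation $S_{\cV_J}=M_J\,S_\cV$, so that $\|S_{\cV_J}^{-1}\|\le\|S_\cV^{-1}\|\,\|M_J^{-1}\|$ and $A_{\cV_J}\ge A_\cV/\|M_J^{-1}\|$; Proposition \ref{pro era} then shows, by a short case analysis on whether $\|V_j\|^2\ge A_\cV$ or $\|V_j\|^2<A_\cV$, that this bound dominates the one in Asgari's statement. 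Your argument instead gives a direct, self-contained proof in the spirit of Asgari's original one: the Sherman--Morrison--Woodbury identity reduces the control of $S_{\cV^j}^{-1}$ to that of $R_j=I_{\cK_j}-V_jS_\cV^{-1}V_j^*$ on the small space $\cK_j$, and projectivity is used essentially to transport the bound $\|M_j^{-1}\|$ to $\|R_j^{-1}\|$ through the intertwining $V_j^*R_j=M_jV_j^*$ together with the fact that $V_j^*$ is $v_j$ times an isometry. All the individual steps check out: the invertibility of $R_j$ via $\sigma(AB)\cup\{0\}=\sigma(BA)\cup\{0\}$, the identity itself, the norm transport, and $\|M_j^{-1}\|\ge 1$ from $0\le V_j^*V_j\le S_\cV$; and, like the paper, you in fact obtain a sharper intermediate bound (denominator $A_\cV+\|V_j\|^2\|M_j^{-1}\|$, with the first power) before weakening it to the stated one. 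What the paper's route buys is brevity and generality --- no projectivity assumption and erasure of any subset $J$; what yours buys is an explicit formula for $S_{\cV^j}^{-1}$ and a derivation that explains where projectivity and the single-erasure hypothesis actually enter Asgari's form of the estimate.
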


\pausa
Actually,  Asgari's result can be generalized to any subset $J$ of $\IM\,$ and general RS's. 
In the following statement we shall give 
necessary and sufficient conditions which guarantee 
that the erasure of  $\{V_i\}_{i\in J}$ of a non necessary projective 
$\RSV\in \RS$  provides another  RS. 
Recall that the sharp bounds for $\cV$ are given by 
$A_\cV = \|S_\cV\inv \|\inv$ and $B_\cV = \|S_\cV\|$. 

\begin{teo}\label{cotas nosiglias}
Let $\RSV \in \RS(m,\k,d)$  with bounds $A_\cV \coma B_\cV$. 
Fix a subset 
  $J\subset \IM$ and consider the matrix 
$M_J \igdef I_d-\sum_{i\in J} V_i^*V_i\,S_\cV^{-1} \in \mat$.   
Then,  
\beq\label{da RS}
\cV_J=(V_i)_{i\in \IM \setminus J}  \quad \mbox{ \rm is a RS for  \ \ $\hil \cong \C^d$ } 
\iff  M_J \in \matinvd  \ .
\eeq
In this case $S_{\cV_J} = M_J\, S_\cV$ and   $\cV_J$ has bounds 
$A_{\cV_J} \ge \frac{A_\cV}{\|M_J^{-1}\|}$ and $B_{\cV_J} \le B_\cV\,$.
\end{teo}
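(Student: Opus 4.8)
The plan is to reduce the entire statement to a single factorization identity between RS operators. For any $J\subset\IM$ put $S_{\cV_J}=\sum_{i\in\IM\setminus J}V_i^*V_i\in\posop$; since this operator is automatically positive semidefinite, by item 4 of Definition \ref{defi recons} the family $\cV_J$ is a RS for $\cH$ \sii $S_{\cV_J}$ is invertible. The first step is the elementary computation
$$
S_{\cV_J}=\sum_{i\in\IM}V_i^*V_i-\sum_{i\in J}V_i^*V_i=S_\cV-\sum_{i\in J}V_i^*V_i=\Big(I_d-\sum_{i\in J}V_i^*V_i\,S_\cV^{-1}\Big)S_\cV=M_J\,S_\cV\ ,
$$
which holds with no invertibility hypothesis on $M_J$.

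Since $S_\cV\in\glh^+$ is invertible, the product $M_J\,S_\cV$ is invertible if and only if $M_J$ is, and this gives the equivalence \eqref{da RS}; moreover, in that case the displayed identity is precisely the asserted formula $S_{\cV_J}=M_J\,S_\cV$ for the RS operator. One may note in passing that $M_J$ need not be self-adjoint, yet $S_\cV\mrai M_J\,S_\cV\rai=S_\cV\mrai S_{\cV_J}S_\cV\mrai\ge 0$, so $M_J$ is similar to a positive semidefinite matrix and in particular has nonnegative spectrum; this is not needed for the bounds but clarifies the situation.

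For the bounds, the upper estimate follows because $S_\cV-S_{\cV_J}=\sum_{i\in J}V_i^*V_i\ge 0$, hence $0\le S_{\cV_J}\le S_\cV$, and the spectral norm is monotone on $\posop$, so $B_{\cV_J}=\|S_{\cV_J}\|\le\|S_\cV\|=B_\cV$. For the lower estimate I would invert the factorization, $S_{\cV_J}\inv=S_\cV\inv M_J\inv$, so submultiplicativity of the spectral norm yields $\|S_{\cV_J}\inv\|\le\|S_\cV\inv\|\,\|M_J\inv\|=A_\cV\inv\,\|M_J\inv\|$, and therefore $A_{\cV_J}=\|S_{\cV_J}\inv\|\inv\ge A_\cV/\|M_J\inv\|$.

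There is no deep obstacle here: the crux is recognizing the factorization $S_{\cV_J}=M_J\,S_\cV$, after which the equivalence and both norm bounds are immediate. The only point requiring mild care is that $M_J$ is in general not self-adjoint, so one must argue through the spectral norm and submultiplicativity rather than directly with eigenvalues of $M_J$; this is also what forces the lower bound to be expressed via $\|M_J\inv\|$.
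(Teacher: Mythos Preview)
Your proof is correct and essentially identical to the paper's: both hinge on the factorization $S_{\cV_J}=M_J\,S_\cV$, derive the equivalence from invertibility of $S_\cV$, and obtain the bounds via $0\le S_{\cV_J}\le S_\cV$ and submultiplicativity of the spectral norm applied to $S_{\cV_J}^{-1}=S_\cV^{-1}M_J^{-1}$. Your extra remark that $M_J$ is similar to a positive operator is a nice aside but, as you note, not needed.
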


\begin{proof}
The equality $S_{\cV_J} = M_J\, S_\cV$ follows from the following fact: 
$$
M_J = I_d - \sum_{i\in J}V_i^*V_i\,S_\cV^{-1} = S_\cV \,S_\cV^{-1}
-\sum_{i\in J}V_i^*V_i\,S_\cV^{-1} = \sum_{i\notin J}V_i^*V_i\ S_\cV^{-1} = 
S_{\cV_J}\,S_\cV^{-1} \ .
$$ 
This 
implies the equivalence of Eq. \eqref{da RS}. 
On the other hand, 
\[
\frac{A_\cV}{\|M_J^{-1}\|}=\|S_\cV^{-1}\|^{-1} \, \|M_J^{-1}\|^{-1} \leq \|(M_J\, S_\cV)^{-1}\|^{-1}= \|S_{\cV_J}^{-1}\|^{-1}  = A_{\cV_J} \ .
\]
The fact that $0<S_{\cV_J} \le S_\cV$ assures that  $B_{\cV_J} \le B_\cV\,$.
\end{proof}

\pausa
In the case $J=\{j\}$, the lower bound in  Theorem \ref{cotas nosiglias} 
is greater than that obtained in \cite{As}: 

\begin{pro}\label{pro era}
Let $\cV$, and $M_J$ be as in Theorem \ref{cotas nosiglias}, with $J=\{j\}$. Then 
\beq\label{es mejor}
\frac{A_\cV^2}{A_\cV+\|V_j\|^2\|M_J^{-1}\|^2} \ \leq  \ \frac{A_\cV}{\|M_J^{-1}\|}\ . 
\eeq
\end{pro}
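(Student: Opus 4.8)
The plan is to reduce the inequality \eqref{es mejor} to an elementary inequality between positive real numbers. Write $A = A_\cV > 0$, $b = \|V_j\|^2 \ge 0$, and $t = \|M_J^{-1}\|$; since $M_J \in \matinvd$ we have $t > 0$. The claim \eqref{es mejor} is then
\[
\frac{A^2}{A + b\,t^2} \le \frac{A}{t}\ ,
\]
and after multiplying both sides by the positive quantity $\frac{A + b\,t^2}{A} \cdot t$ (which is legitimate since $A + b\,t^2 > 0$), this is equivalent to $A\,t \le A + b\,t^2$, i.e. to
\[
A\,t \le A + b\,t^2 \ .
\]
So the whole content of the proposition is this last inequality, and the task is to justify it from properties of the norm $t = \|M_J^{-1}\|$.

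Next I would argue that $t = \|M_J^{-1}\| \ge 1$. Indeed, from the computation in Theorem \ref{cotas nosiglias}, $M_J = S_{\cV_J}\, S_\cV^{-1}$ with $J = \{j\}$, so $M_J^{-1} = S_\cV\, S_{\cV_J}^{-1}$. Since $\cV_J$ is obtained by erasing one component, $0 < S_{\cV_J} \le S_\cV$, hence $S_\cV\, S_{\cV_J}^{-1} \ge I_d$ in the sense that its inverse $S_{\cV_J}\, S_\cV^{-1}$ satisfies $\|S_{\cV_J}\, S_\cV^{-1}\| \le 1$; more directly, $\|M_J^{-1}\| = \|S_\cV^{1/2}(S_\cV^{1/2} S_{\cV_J}^{-1} S_\cV^{1/2})S_\cV^{-1/2}\|$, and the positive operator $S_\cV^{1/2} S_{\cV_J}^{-1} S_\cV^{1/2}$ dominates $I_d$ because $S_{\cV_J}^{-1} \ge S_\cV^{-1}$. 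Since a positive operator dominating the identity has norm at least $1$, we get $t \ge 1$. (Alternatively: $M_J M_J^{-1} = I_d$ forces $\|M_J^{-1}\| \ge \|M_J\|^{-1}$, and one checks $\|M_J\| \le 1$ from $M_J = \sum_{i \ne j} V_i^* V_i S_\cV^{-1}$ being a sub-sum, giving the same bound.)

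With $t \ge 1$ in hand, the inequality $A\,t \le A + b\,t^2$ is immediate when $b = 0$ would fail — but note $b = \|V_j\|^2$; if $V_j = 0$ then erasing it changes nothing, $M_J = I_d$, $t = 1$, and the inequality reads $A \le A$, fine. If $b > 0$, write the desired inequality as $b\,t^2 - A\,t + A \ge 0$, but it is cleaner to factor differently: since $t \ge 1$,
\[
A + b\,t^2 - A\,t = A(1 - t) + b\,t^2 \ge A(1-t) + b\,t \ge \ \cdots
\]
is not obviously nonnegative, so instead I would simply use $A\,t \le A\,t^2$ (valid because $t \ge 1$ and $A > 0$) and combine with $b\,t^2 \ge 0$ — wait, that gives $A\,t \le A\,t^2 \le A\,t^2 + b\,t^2$, which is larger than $A + b\,t^2$ in general, so that crude bound is the wrong direction. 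The correct elementary fact is: for $t \ge 1$, $A + b\,t^2 \ge A\,t$ because $A + b\,t^2 \ge A + 0 \ge A$ is too weak; rather, $b\,t^2 \ge 0$ and $A \ge A/t \cdot t$... Let me state it properly: $A + b\,t^2 - A\,t = A(1-t) + b\,t^2$; when $t=1$ this is $b \ge 0$; for $t > 1$ we need $b\,t^2 \ge A(t-1)$, which need NOT hold.

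I therefore revisit the reduction: the honest equivalent of \eqref{es mejor} must be re-derived carefully — cross-multiplying $\frac{A^2}{A+bt^2} \le \frac{A}{t}$ gives $A^2 t \le A(A + bt^2)$, i.e. $At \le A + bt^2$, i.e. $bt^2 - At + A \ge 0$ — and this is exactly the statement to prove, with no further simplification available from $t \ge 1$ alone. The resolution is that $t = \|M_J^{-1}\|$ is not a free parameter: one must also use that $M_J = I_d - V_j^* V_j S_\cV^{-1}$ with $\|V_j^* V_j S_\cV^{-1}\| \le \|V_j\|^2 / A = b/A$. Then for any $\lambda \in \sigma(V_j^* V_j S_\cV^{-1})$ (real and in $[0, b/A]$ after a similarity to a self-adjoint operator, since $S_\cV^{-1/2} V_j^* V_j S_\cV^{-1/2} \ge 0$), the corresponding eigenvalue of $M_J$ is $1 - \lambda \in [1 - b/A, 1]$, so $\|M_J^{-1}\| = \max_\lambda (1-\lambda)^{-1} = (1 - \lambda_{\max})^{-1}$ where $\lambda_{\max} \le b/A$. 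Hence $t = \|M_J^{-1}\| \le (1 - b/A)^{-1}$ provided $b < A$ (which is forced by invertibility of $M_J$ in the worst case, or handled separately), i.e.
\[
\frac{1}{t} \ge 1 - \frac{b}{A} = \frac{A - b}{A}\ , \quad\text{equivalently}\quad At \ge A(A - b)\cdots
\]
hmm, rearranging: $t \le \frac{A}{A-b}$ gives $t(A-b) \le A$, i.e. $At - bt \le A$, i.e. $At \le A + bt$. Since $t \ge 1$, $bt \le bt^2$, so $At \le A + bt \le A + bt^2$, which is exactly what we want. So the proof is: first establish $1 \le t \le \frac{A}{A - \|V_j\|^2}$ using the spectral description of $M_J$ via the positive operator $S_\cV^{-1/2}V_j^*V_jS_\cV^{-1/2}$, then chain $A_\cV\,t \le A_\cV + \|V_j\|^2 t \le A_\cV + \|V_j\|^2 t^2$, then divide back. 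The main obstacle is getting the two-sided bound on $t$ cleanly — in particular the upper bound $t \le A_\cV/(A_\cV - \|V_j\|^2)$, which requires identifying $\sigma(M_J)$ through the self-adjoint conjugate $S_\cV^{-1/2}V_j^*V_jS_\cV^{-1/2}$ and bounding its top eigenvalue by $\|V_j\|^2\|S_\cV^{-1}\| = \|V_j\|^2/A_\cV$; once that is in place the rest is the short algebraic chain above.
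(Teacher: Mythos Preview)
Your reduction to the scalar inequality $A\,t \le A + b\,t^2$ (with $A=A_\cV$, $b=\|V_j\|^2$, $t=\|M_J^{-1}\|$) is exactly what the paper does, and so is the key step in the case $b<A$: bound $\|I_d-M_J\|\le b/A<1$, deduce $t\le A/(A-b)$, and chain $A\,t\le A+bt\le A+bt^2$ using $t\ge 1$. So the route is the same as the paper's.

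However, there is a genuine gap. Your argument only covers the case $\|V_j\|^2<A_\cV$, and the parenthetical escape ``forced by invertibility of $M_J$ in the worst case'' is false. Invertibility of $M_J$ does \emph{not} force $b<A$: for instance with $d=2$, $V_1=e_1^*$, $V_2=e_2^*$, $V_3=10\,e_1^*$ one has $A_\cV=1$, $\|V_3\|^2=100$, and $M_{\{3\}}=\diag{1/101,\,1}$ is invertible. The paper handles the case $b\ge A$ directly (using $t\ge 1$):
\[
\frac{A^2}{A+bt^2}\ \le\ \frac{A^2}{bt^2}\ \le\ \frac{A}{t^2}\ \le\ \frac{A}{t}\ .
\]
You have all the ingredients for this (you established $t\ge 1$), but you did not actually write it, so the proof is incomplete as stated.

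A smaller point: in the case $b<A$ you derive the bound on $t$ from the eigenvalue identity $\|M_J^{-1}\|=(1-\lambda_{\max})^{-1}$. This equality is not justified, because $M_J=I_d-V_j^*V_jS_\cV^{-1}$ is in general not normal, and the operator norm of a non-normal matrix can exceed its spectral radius. The correct (and simpler) argument is the Neumann bound: $\|I_d-M_J\|=\|V_j^*V_jS_\cV^{-1}\|\le b/A<1$ gives $\|M_J^{-1}\|\le (1-b/A)^{-1}=A/(A-b)$ directly, which is exactly what the paper uses and what your chain of inequalities needs.
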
 

\begin{proof}
We can suppose $\|M_J^{-1}\|\geq 1$, since otherwise \eqref{es mejor} is evident. 
Note that 
$$
\barr{rl}

\|V_j\|^2\geq A_\cV & \implies \ 
\frac{A_\cV^2}{A_\cV+\|V_j\|^2\|M_J^{-1}\|^2} \ \leq \ 
\frac{A_\cV^2}{\|V_j\|^2\|M_J^{-1}\|^2} \ \le \  \frac{A_\cV}{\|M_J^{-1}\|^2} 
 \ \leq  \ \frac{A_\cV}{\|M_J^{-1}\|}\ .
\earr
$$
But if $\|V_j\|^2<A_\cV\,$, then
$\|I_d-M_J\|=\|V_j^*V_j\, S_\cV^{-1}\|\leq \frac{\|V_j\|^2}{A_\cV}<1$. 
Therefore 
$$
\barr{rl}
 \|M_J^{-1}\|\leq \frac{A_\cV}{A_\cV-\|V_j\|^2} & \implies 
A_\cV \,\|M_J^{-1}\| \leq A_\cV +\|V_j\|^2 \|M_J^{-1}\|
\leq  A_\cV +\|V_j\|^2 \|M_J^{-1}\|^2 
\ ,
\earr
$$
which clearly  implies the inequality of Eq. \eqref{es mejor}.
\end{proof}

\begin{rem}
Let  $J\inc \IM\,$,  $\cV\in \RS$, and $M_J$ be as 
in Theorem \ref{cotas nosiglias}. 
Assume that $\|\sum_{i\in J} V_i^*\, V_i\| < A_\cV\,$   
(compare with the hypothesis $\sum_{i\in J} \|V_i\|^2<A_\cV\,$  of Theorem \ref{teo CasKu}). 
Then, as in the proof of Proposition \ref{pro era}, it can be shown that
under this assumption it holds that 
$\|I_d-M_J\|<1 \implies M_J \in \matinvd$ and 
that the lower bounds satisfy  
$$
\barr{rl}
A_\cV-\sum_{i\in J} \|V_i\|^2 & \leq  \ 
A_\cV-\|\sum_{i\in J} V_i^*\, V_i\| \le \frac{A_\cV}{\|M_J^{-1}\|} 
\le A_{\cV_J}\ .
\earr
$$ 
Hence Theorem \ref{cotas nosiglias} generalizes Theorem \ref{teo CasKu} to general RS's with better bounds.
The matrix $M_J$ also serves to compute the canonical 
dual system $(\cV_J)^\#$: If we denote 
$\cV^\#  = \{W_i \}\subim $ and 
$\cV^\# _J = \{W_i \} _{i\notin J}\,$, then the formula 
$S_{\cV_J} = M_J\, S_\cV$ of Theorem \ref{cotas nosiglias} 
gives the equality  
$$
\cV^\# _J  \cdot M_J\inv \igdef 
 \{W_i \, M_J\inv \} _{i\notin J}= 
 \{V_i\, S_\cV\inv  \, M_J\inv \} _{i\notin J}
 = \{V_i \, S_{\cV_J} \inv \} _{i\notin J} = (\cV_J)^\#   \ .
 $$
 That is, 
 $(\cV_J)^\# $ is the truncation  of  the canonical dual $\cV^\#$ modified with 
 $M_J\inv\,$.
\EOE
\end{rem}

\section{Geometric presentation of  $\RS$.}\label{RG}

In this section we shall study several objects related with 
the sets $\RS$ from a geometrical point of view. 
On one hand, this study 
is of independent interest. On the other hand, 
some geometrical results  of this section will be necessary 
in order to characterize the minimizers of the joint potential, 
a problem that we shall consider in Section \ref{sec joint}.

\subsection{General Reconstruction systems}
\begin{num}\label{top en SP}
Observe that we can use on $\SP$ the natural  metric 
$\|\cV\|_{_2} = \big(\,\sum_{i\in \IM} \|V_i\|_{_2}\big)\rai $ for 
$\RSV \in \SP$. Note that 
$$
\|\cV\|_{_2}^2 = \suml_{i\in \IM} \|V_i\|_{_2}^2 = \|T_\cV\|_{_2}^2 \peso{\big( \, in 
the space $\lhk 
\,$ \big) .} 
$$
With this metric it is easy to see that in $ \RS \inc \SP $ the following conditions hold: 
\ben
\item 
The space $\RS$ is open in $\SP$, since the map 
$\RSO:\SP \to \op$ given by $\RSO(\cV) = S_\cV = T_\cV ^*\,T_\cV$ 
(for $\cV \in \SP$\,) is continuous. 
\item On the other hand, if we fix $\cV \in \RS$, then the set 
$\cD(\cV)$ is {\bf closed} in $\SP$, because the map 
$\SP \ni \cW \mapsto T^*_\cW\, T_\cV \in \op$ is continuous. 
Observe that the equality $T^*_\cW\, T_\cV = I_\cH \implies T^*_\cW $ is surjective, so that 
$\cW\in \RS$. \EOE
\een
\end{num}

\begin{num}\label{diff RS}
Given a  {\bf surjective }  $A\in L( \cK\coma 
\cH\,)$, 
let us consider $P_{\cK_i} \, A^* \in L(\cH \coma \cK_i)$ \fori. 
Then $A$  produces a system $\cW_A = (P_{\cK_i} \, A^* )_{i\in \IM} \in \RS$ such that 
$$
T_{\cW_A}^* = A 
\ \mbox{ and } \ S_{\cW_A} = AA^* \in \glh^+\ .
$$ 
Therefore, given a fixed $\RSV\in \RS$,  we can parametrize 
$$
\RS = \{ U\cdot \cV \igdef (P_{\cK_i}\, U\, T_\cV )_{i\in \IM} : U \in \glk \} \ . 
$$
In other words, the Lie group $\glk$ acts 
transitively on $\RS$, where the action is given by 
the formula $U\cdot \cV = (P_{\cK_i}\, U\, T_\cV )_{i\in \IM} \,$. 
Indeed, for every $x = (x_i)_{i\in \IM} \in \cK$,  
\beq\label{TVU}
\barr{rl}
T_{U\cdot \cV}^*\, x & = \suml _{i\in \IM} T_\cV^* \,U^* \,P_{\cK_i} \, x  
= T_\cV^* \,U^* \, \suml _{i\in \IM} \,P_{\cK_i} \, x   = T_\cV^* \, U^* \, x \ . 
\earr
\eeq
Therefore $T_{U\cdot \cV}^* = T_\cV^* \, U^*  \in \lkh$, 
which  is {\bf surjective} for every $U \in \glk$, so that  $U\cdot \cV \in \RS$. 
Hence $T_{U \cdot \cV} = U \, T_\cV\,$, which shows that this is indeed an action. 
On the other hand, for every  $\cW \in \RS$, 
since both $T_\cW^*$ and $T_\cV^*$ are surjective, then there exists $U \in \glk$ such that 
$T_{\cW}^* = T_\cV^* \, U^*$. Therefore we have that  $ \cW = U\cdot \cV$. 

\pausa
Fix $\RSV\in \RS$. Then we can define a continuous  surjective map 
$$
\pi _\cV : \glk \to \RS \peso{given by} \pi _\cV (U) = U \cdot \cV \peso{for} U \in \glk \ .
$$
The isotropy subgroup of this action is $\cI_\cV = \pi _\cV \inv (\cV)= 
\{ U \in \glk : U\big|_{R(T_\cV)} = \mbox { Id } \}$.
Indeed, looking at Eq. \eqref{TVU} we see that $U\cdot \cV = \cV \iff T_\cV^*\,U^* = T_\cV^* 
\iff U \, T_\cV = T_\cV\,$. 
In \cite{CPS} it is proved that these facts are sufficient to assure
that $\RS$ is a smooth submanifold of $\SP$ (actually 
it is an open subset)  such that the map 
$\pi _\cV : \glk \to \RS$ becomes a smooth submersion. 
On the other hand, we can parametrize 
$\cD(\cV)$ in two different ways\,: 
\beq\label{pepe}
\barr{rl}
\cD(\cV) & 
= \Big\{ \cW \in \SP \  : \   T_\cW^* = T_{\cV^\#}^* + G \ , \ \  G\in \lkh \ \mbox{ and } \  G\big|_{R(T_\cV)} 
\equiv 0  \Big\} 
\\&\\ 
& = \Big\{ U \cdot \cV^\#   \ : \  U \in \glk  \ \mbox{ and } \  
P \, U ^* \, P  = P  \Big\} 
\ , \quad \mbox{where $P = P_{R(T_\cV)} \,$ .} \earr
\eeq
Indeed, just observe that 
$\ker  T_{\cV^\#}^*  = \ker S_V\inv T_\cV ^* = R(T_\cV)\orto = \ker P$. Therefore  
$$ 
T_{U \cdot \cV^\#}^* \  T_\cV = T_{\cV^\#}^* \, U^* \, T_\cV = I_\cH \iff 
U^*\, x \in x + \ker T_{\cV^\#}^* 
\ \ \mbox{for every $x\in R(P)$ ,} 
$$
which means exactly that $PU^*P = P$. \EOE
\end{num}

\begin{rem}\label{rem descomposicion} This geometric presentation is similar to the presentation of vector frames 
done in \cite{CPS}. The relationship is based on the following fact: 

\pausa
The space $\RS$ can be seen as an agrupation in packets of vector frames. 
Recall that  $n = \tr \,\k = \sum\subim k_i\,$. 
Let us denote by $\cE_i = \{ e_1^{(i)} \coma \dots \coma e_{k_i}^{(i)}\} $ the canonical ONB 
of each $\cK_i = \C^{k_i}$, and the set $\cE = \bigcup\subim \, \cE_i \,$, which is a 
reenumeration of the canonical ONB of the space $\cK = \bigoplus\limits_{i\in \, \IN{m}}  \ \cK_i \cong \C^n$. 
Then,  there is a natural one to one correspondence 
\beq\label{rep RS}
\RS \ni \RSV \longleftrightarrow 
\Big( \, ( \, V_i ^* \, e_j^{(i)} \, )_{j\in \IN{k_i}} \, \Big)\subim \ = 
\big(  \, T_\cV ^* \, e \, \big)_{e\in\, \cE} 
\ \in \ \cH^n \ ,
\eeq
where the right term is a general $n$-vector frame for $\cH$. On the other hand, fixed the ONB 
$\cE$ of $\cK$, the set of $n$-vector frames for $\cH$ can be also identified with the space 
$E(\cK \coma \cH) \igdef
\{ A \in \lkh : A $ is surjective $\}$, via the map 
$A \longleftrightarrow \big(  \, A \, e \, \big)_{e\in\, \cE} \,$.

\pausa
The geometrical representation of $\RS$ given before is the natural 
geometry of the space of epimorphisms 
$E(\cK \coma \cH)$ under the (right) action of $\glk$. Through all these 
identifications we get 
 the correspondence $\RS \ni \cV  \longleftrightarrow T_\cV^* \in E(\cK \coma \cH)$. 

\pausa
Observe that, in terms of Eq. \eqref{rep RS}, a system $\RSV\in \RS$ satisfy that
$\cV\in \PRO \iff $ each subsystem $( \, V_i ^* \, e_j^{(i)}\, )_{j\in \IN{k_i}}$  is a multiple of 
an orthonormal system in $\cH$.   \EOE
\end{rem}

\subsection{Projective RS's with fixed weights}
Given a fixed  sequence of weights $\v=(v_i)_{i\in \IM}\in \R_{>0}^m\,$, 
we define the set of projective RS's with fixed set of weights $\v$:  
\beq\label{PROv}
\PROv \igdef \Big\{ \, \RSV \in  \PRO  : \ 
\|V_i\|_{sp} =v_i   \, \text{ for every } i\in \IN{m} \Big\} \ .
\eeq
Denote by $\tau=\suml\subim v_i^2\,k_i\,$. 
Observe that $\tr \, S_\cV = \suml \subim \tr \, V_i^* \, V_i =  \tau $
for every $\cV \in \PROv$. In what follows we shall denote  by 
$$
\cM_d(\C)^+_\tau \igdef \{A\in \cM_d(\C)^+\, : \, \tr \,A=\tau \} \  \peso{ and }\
\gld_\tau \igdef \cM_d(\C)^+_\tau \cap \matinvd \, .$$ 
the set of $d\times d$ positive  and positive invertible operators with fixed trace $\tau$, 
endowed with the metric and geometric structure induced  by 
those of $\matinvd$.

\pausa
In this section we look for conditions which assure that the smooth  map
\beq\label{map RSO}
\RSO : \PROv \to \gld_\tau \peso{given by} \RSO(\cV ) = S_\cV = 
\sum \subim V_i ^*\, V_i \ , 
\eeq
for every $ \RSV \in \PROv $, has smooth local cross sections. 
Before giving these conditions and the proof of their sufficiency, we 
need some notations and two geometrical lemmas: 
Fix $d\in \N$. For every $k \in \IN{d}\,$, we denote by  
$\cI(k\coma d) = \{U\in  L(\C^k\coma\C^d) : U^*U = I_k\}$  
the set of isometries. 
Given an $m$-tuple $\k=(k_i)_{i\in \IM}\in \IN{d}^m \inc \N^m$,  
we denote by 
$$
\cI(\k\coma d) \igdef \bigoplus\subim  \cI(k_i\coma d)\ \inc 
\bigoplus\subim   L(\cK_i \coma\cH) \cong L(\cK\coma \cH) \ ,$$
 endowed with the product (differential, metric) structure
 (see \cite{AC} for a description of the geometrical structure). 
  Similarly,   
let $\text{Gr}(k,d)$ denote the Grassmann manifold of orthogonal projections of rank $k$ in $\C^d$ and let  
$$
\gr(\k\coma d) \igdef \bigoplus\subim  \gr (k_i\coma d) \inc \op^m \ ,
$$ 
with the product smooth structure (see \cite{CPR}).

\begin{lem}\label{fundamental}
Consider the smooth map  
$\Phi:\cI(\k\coma d)\rightarrow \gr(\k\coma d)$ 
given by 
$$
\Phi( \cW )=(W_1\, W_1^* \coma \dots \coma  W_m\,W_m^*) 
\peso{for every} \RSW 
\in \cI(\k\coma d)  \ .$$ 
Then  $\Phi$ has smooth local cross sections 
around any point $\cP = (P_i)\subim \in  \gr(\k\coma d)$ toward  every 
$\cW \in \cI(\k\coma d) 
$ such that $\Phi(\cW) = \cP$.  
In particular, $\Phi$ is open and surjective. 
\end{lem}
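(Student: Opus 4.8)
The plan is to reduce the statement to a local, coordinatewise problem and then build the cross section on each factor $\gr(k_i,d)$ separately. Since $\cI(\k,d) = \bigoplus_i \cI(k_i,d)$ and $\gr(\k,d) = \bigoplus_i \gr(k_i,d)$, and the map $\Phi$ acts diagonally as $\Phi(\cW) = (W_iW_i^*)_i$, it suffices to treat the case $m=1$: given a smooth map $\varphi: \cI(k,d) \to \gr(k,d)$, $\varphi(W) = WW^*$, construct a smooth local cross section around any $P_0 \in \gr(k,d)$ passing through any prescribed $W_0 \in \varphi^{-1}(P_0)$. The desired cross section for general $m$ is then the product of the one-factor cross sections, and openness and surjectivity of $\Phi$ follow because these properties are preserved under finite products and are local.

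For the one-factor problem, the key observation is that $\varphi^{-1}(P_0) = \{W_0 U : U \in \matu(k)\}$: two isometries with the same range differ by a unitary on the right, since $W_0^* W = U$ is unitary when $R(W) = R(W_0)$. So I want a smooth map $\sigma$ from a neighborhood $\cU$ of $P_0$ in $\gr(k,d)$ into $\cI(k,d)$ with $\varphi \circ \sigma = \mathrm{id}$ and $\sigma(P_0) = W_0$. The standard device is the following: for $P$ close to $P_0$, the operator $P P_0$ restricted to $R(P_0)$ is close to $P_0|_{R(P_0)}$, hence invertible as a map $R(P_0) \to R(P)$; equivalently, $P$ and $P_0$ are connected by a canonical unitary (the "Kovarik" or "unitary intertwiner") $u_P \in \glh$, analytic in $P$, with $u_P P_0 u_P^{-1} = P$ and $u_{P_0} = I$. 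Concretely one may take $u_P$ built from $P P_0 + (I - P)(I - P_0)$ via polar decomposition, or the explicit formula $u_P = \big(I - (P - P_0)^2\big)^{-1/2}\big(P P_0 + (I-P)(I-P_0)\big)$, which is a unitary depending smoothly (indeed real-analytically) on $P$ in a neighborhood of $P_0$ where $\|P - P_0\| < 1$. Then I define $\sigma(P) = u_P W_0$. Since $W_0^* W_0 = I_k$ we get $\sigma(P)^*\sigma(P) = W_0^* u_P^* u_P W_0 = I_k$, so $\sigma(P) \in \cI(k,d)$; and $\sigma(P)\sigma(P)^* = u_P W_0 W_0^* u_P^* = u_P P_0 u_P^* = P$, so $\varphi(\sigma(P)) = P$; and $\sigma(P_0) = u_{P_0} W_0 = W_0$. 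This is exactly the sought cross section.

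Assembling: for a point $\cP = (P_i)_i \in \gr(\k,d)$ and any $\cW = (W_i)_i \in \Phi^{-1}(\cP)$, choose for each $i$ a neighborhood $\cU_i$ of $P_i$ in $\gr(k_i,d)$ and the cross section $\sigma_i: \cU_i \to \cI(k_i,d)$ with $\varphi_i \circ \sigma_i = \mathrm{id}$ and $\sigma_i(P_i) = W_i$, as above. Then $\cU = \bigoplus_i \cU_i$ is a neighborhood of $\cP$ in $\gr(\k,d)$, and $\Sigma = (\sigma_1, \dots, \sigma_m): \cU \to \cI(\k,d)$ is smooth, satisfies $\Phi \circ \Sigma = \mathrm{id}_\cU$, and $\Sigma(\cP) = \cW$. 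This proves the existence of smooth local cross sections through every preimage point. Surjectivity of $\Phi$ is clear (every $P_i$ of rank $k_i$ is $W_iW_i^*$ for some isometry $W_i$, e.g. an orthonormal basis of $R(P_i)$), and openness follows since a continuous surjection admitting local cross sections is open: if $\cW_0 \in \cI(\k,d)$ and $\cP_0 = \Phi(\cW_0)$, take a cross section $\Sigma$ on a neighborhood $\cU$ of $\cP_0$ with $\Sigma(\cP_0) = \cW_0$; then $\Sigma(\cU)$ is a neighborhood of $\cW_0$ contained in $\Phi^{-1}(\cU)$... more carefully, one argues that for any open $\cO \ni \cW_0$ in $\cI(\k,d)$, $\Sigma^{-1}(\cO)$ is an open neighborhood of $\cP_0$ whose $\Phi$-image lies in $\Phi(\cO)$, giving $\cP_0$ interior to $\Phi(\cO)$.

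The main obstacle is the honest construction of the smooth (real-analytic) unitary intertwiner $u_P$ between nearby projections $P$ and $P_0$ and the verification that $P \mapsto u_P$ is smooth with $u_{P_0} = I$; this is where the work lies, though it is classical and is presumably available in the references \cite{CPR} or \cite{AC} cited for the geometry of the Grassmannian. Everything else—the product reduction, the fiber description $\varphi^{-1}(P_0) = W_0\,\matu(k)$, and deducing openness from the existence of sections—is routine.
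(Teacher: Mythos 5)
Your proof is correct and follows essentially the same route as the paper: reduce to the case $m=1$ by the product structure, and then obtain the section $P\mapsto u_P\,W_0$ where $u_P$ is the unitary part of the polar decomposition of $PP_0+(I-P)(I-P_0)$, exactly the classical construction from \cite{CPR} that the paper invokes. The explicit formula you give for $u_P$ even makes the smoothness claim self-contained, so there is no gap.
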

\proof Since both spaces have a product structure, it suffices 
to consider the case $m=1$.  
It is clear that the map  $\Phi$ is surjective. 

\pausa
For every $P \in \gr(k\coma d)$, 
the $C^\infty$ map $\pi_P : \cU(d) \to \gr(k\coma d)$ given by $\pi_P(U) = UPU^*$ 
for $U\in \cU(d)$ is a submersion with a smooth local cross  section (see \cite{CPR})
$$
h_P : U_P \igdef \{Q\in \gr(k\coma d) : \|Q-P\|<1 \} \to \cU(d)
\peso{such that} h_P(P) = I_d \ .
$$ 
For completeness we recall that,  for every $Q \in U_P\,$, the matrix  
$h_P(Q) $ is the unitary part in the polar decomposition  of 
the invertible matrix $QP + (I_d -Q) (I_d -P)$.  
Then, fixed $W\in \cI(k \coma d)$ such that 
$\Phi (W) = P$, we can  define the following smooth local cross section for $\Phi\,$: 
\beq
s_{P\coma W} : U_P \to \cI(k\coma d) 
\peso{given by} s_{P\coma W}(Q) = h_P(Q) \, W  \ , \peso{for every} Q\in U_P \ .
\QEDP
\eeq

\pausa
We shall need the following result from \cite{MRS}. In order to state it 
we recall the following notions and introduce some notations:
\ben
\item Fix $\v=(v_i)_{i\in \IM}\in\R_{>0}^m \, $. 
We shall consider the smooth map
\begin{equation}\label{el fi}
\Psi_\v  
: \cI(\k\coma d)
\rightarrow \cM_d(\C)^+\peso{given by} \Psi_\v(\cU)=\sum_{i\in \IM} v_i^2\ U_i\,U_i^*
\end{equation}
for every $\RSU \in \ \cI(\k\coma d)$. 
\item Given a set $\cP = \{P_j :j\in \IM\}\inc \cM_d(\C)^+$, 
we denote by 
\beq\label{conm}
\cP' = \{P_j :j\in \IM\}' = \{A\in \cM_d(\C): AP_j = P_j \, A \peso{for every} j \in \IM\}\, .
\eeq
Note that $\cP'$ is a (closed) unital selfadjoint subalgebra of $\cM_d(\C)$.
Therefore, 
\beq\label{hay proye}
\mbox{$\cP' \neq \C\, I_d \iff $ there exists 
a non-trivial orthogonal projection $Q \in \cP'$ .} 
\eeq
\een

\begin{lem}[\cite{MRS}] \label{citanosa}
Let $\v=(v_i)_{i\in \IM}\in\R_{>0}^m \, $ and $\cP=\{P_i\}_{i\in \IM}\in \gr(\k\coma d)$. Denote by $\tau=\sum_{i\in \IM}v_i^2\,k_i\,$. 
Then 
the  map $S_\v:\gr(\k\coma d)\rightarrow \cM_d(\C)_\tau ^+$ given by
\begin{equation}\label{el sv}
 S_\v(\cQ) 
= \sum_{i\in \IM} v_i^2\ Q_i \peso{for} \cQ= \{Q_i\}_{i\in \IM}\in \gr(\k\coma d)
\end{equation} 
is smooth and, if  $\cP$ satisfies that $\cP'=\CC\, I_d\,$, then 
\begin{enumerate}
\item The matrix  $S_\v (\cP) \in \gld_\tau\,$. 
\item The image of $S_\v$ contains an open neighborhood of $S_\v(\cP)$ in $\cM_d(\C)^+_\tau$.
\item Moreover, $S_\v$ has a 
smooth local cross section around $S_\v(\cP)$ towards $\cP$. 
\QED
\end{enumerate}
\end{lem}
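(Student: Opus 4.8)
The plan is to establish the three clauses in order. Smoothness of $S_\v$ is immediate, since $S_\v$ is the restriction to the compact submanifold $\gr(\k\coma d)\inc\op^m$ of the $\R$-linear map $(A_i)\subim\mapsto\sum\subim v_i^2A_i$. For clause (1), I would observe that $S_\v(\cP)=\sum\subim v_i^2P_i\ge0$ and that $\ker S_\v(\cP)=\{x:\sum\subim v_i^2\|P_ix\|^2=0\}=\bigcap\subim\ker P_i=:W$. Writing $Q=P_W$, the inclusion $R(Q)=W\inc\ker P_i$ forces $P_iQ=0$, and taking adjoints $QP_i=0$; hence the projection $Q$ commutes with every $P_i$, i.e. $Q\in\cP'$. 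Since $\cP'=\C\,I_d$, necessarily $Q\in\{0\coma I_d\}$, and $Q=I_d$ is ruled out because it would force every $P_i=0$ and so $\tau=\sum\subim v_i^2\tr P_i=0$, against $\tau>0$ (valid since $n=\tr\k\ge d\ge1$). Thus $W=\{0\}$, $S_\v(\cP)$ is invertible, and $\tr S_\v(\cP)=\sum\subim v_i^2k_i=\tau$, i.e. $S_\v(\cP)\in\gld_\tau$.

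For clauses (2) and (3) the key is to show that $S_\v$, regarded near $\cP$ as a map into the open submanifold $\gld_\tau$ of the affine space $\cM_d(\C)^+_\tau$ (legitimate by clause (1) and continuity), is a submersion at $\cP$. Granting this, the submersion (rank) theorem gives simultaneously that $S_\v$ is open near $\cP$ --- so $R(S_\v)$ contains an open neighbourhood of $S_\v(\cP)$ in $\cM_d(\C)^+_\tau$, which is clause (2) --- and that $S_\v$ admits a smooth local cross section defined near $S_\v(\cP)$ and sending it to $\cP$, which is clause (3).

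To verify the submersion property I would compute the differential directly. Writing $\cM_d(\C)_{sa}$ for the real space of self-adjoint $d\times d$ matrices and using the standard description $T_{P_i}\gr(k_i\coma d)=\{X\in\cM_d(\C)_{sa}:P_iXP_i=(I_d-P_i)X(I_d-P_i)=0\}$ (see \cite{CPR}), every tangent vector to $\gr(k_i\coma d)$ is traceless, and
\[
dS_\v|_\cP\big((X_i)\subim\big)=\sum\subim v_i^2\,X_i\ ,
\]
so $R(dS_\v|_\cP)$ is contained in $T_{S_\v(\cP)}\gld_\tau=\{Y\in\cM_d(\C)_{sa}:\tr Y=0\}$. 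Taking orthogonal complements inside $\cM_d(\C)_{sa}$ for the trace inner product, one gets $R(dS_\v|_\cP)^\perp=\bigcap\subim\big(T_{P_i}\gr(k_i\coma d)\big)^\perp=\bigcap\subim\{Y\in\cM_d(\C)_{sa}:YP_i=P_iY\}=\cP'\cap\cM_d(\C)_{sa}$, which equals $\R\,I_d$ by the hypothesis $\cP'=\C\,I_d$. Hence $R(dS_\v|_\cP)$ has codimension $1$ in $\cM_d(\C)_{sa}$; being contained in the codimension-$1$ traceless subspace, it must coincide with it, so $dS_\v|_\cP$ is onto $T_{S_\v(\cP)}\gld_\tau$.

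An essentially equivalent route, matching the notation introduced just before the statement, is to use $\Psi_\v=S_\v\circ\Phi$ with $\Psi_\v$ as in \eqref{el fi} and $\Phi$ as in Lemma \ref{fundamental}: since that lemma shows $\Phi$ to be a surjective submersion with smooth local sections, for any $\cU_0\in\cI(\k\coma d)$ with $\Phi(\cU_0)=\cP$ one has $R(dS_\v|_\cP)=R(d\Psi_\v|_{\cU_0})$, so the submersion property of $S_\v$ at $\cP$ is equivalent to that of $\Psi_\v$ at $\cU_0$, and a smooth local section $r$ of $\Psi_\v$ yields the section $\Phi\circ r$ of $S_\v$. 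In either presentation the main obstacle is exactly the differential computation above: one must handle with care the tangent-space descriptions of the Grassmannians (or of the isometry manifolds $\cI(k_i\coma d)$) and identify the annihilator of the image of the differential with the self-adjoint part of the commutant $\cP'$; it is precisely there that the nondegeneracy hypothesis $\cP'=\C\,I_d$ is used --- both for the invertibility in (1) and for pinning down the codimension in (2)--(3).
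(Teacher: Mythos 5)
Your argument is correct. Note, however, that the paper does not prove this lemma at all: it is imported verbatim from \cite{MRS} and stamped with an immediate \QED, so there is no in-paper proof to compare against. What you have written is a complete, self-contained substitute. Part (1) is handled exactly as one would hope: the kernel projection $Q=P_{\bigcap\subim \ker P_i}$ commutes with every $P_i\,$, so the hypothesis $\cP'=\C\,I_d$ forces $Q\in\{0\coma I_d\}$, and $Q=I_d$ is absurd. For (2)--(3), the submersion argument is the right mechanism, and the one delicate step --- identifying $\bigl(\sum\subim T_{P_i}\gr(k_i\coma d)\bigr)^\perp$ inside the self-adjoint matrices with $\cP'\cap\cM_d(\C)_{sa}=\R\,I_d$, then squeezing the image against the codimension-one traceless hyperplane --- is carried out correctly: a self-adjoint matrix is trace-orthogonal to all off-diagonal self-adjoint perturbations of $P_i$ precisely when it is block-diagonal for $R(P_i)\oplus\ker P_i\,$, i.e.\ commutes with $P_i\,$. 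Your alternative phrasing through $\Psi_\v=S_\v\circ\Phi$ is consistent with the scaffolding the paper sets up just before the lemma (Lemma \ref{fundamental} and Eq.\ \eqref{el fi}), which is presumably close to how \cite{MRS} argues; either route is fine, and the direct computation on $\gr(\k\coma d)$ is arguably cleaner since it avoids lifting to the isometry manifolds.
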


\begin{num}\label{gamma diff}
The   set 
$\cI_0(\k \coma d) = \{\cW \in \cI(\k \coma d) : S_\v\circ \Phi(\cW) \in \gld\}$  is 
open in  $\cI(\k \coma d)$. Observe that its definition does not depend 
on  the sequence $\v=(v_i)_{i\in \IM}\in \R_{>0}^m\,$ of weights.  Moreover, 
the map $\gamma : \cI_0(\k \coma d) \to \PROv$ given by 
\beq\label{la gama}
\gamma (\cW ) = \{ v_i \, W_i^*\} \subim \in \PROv 
\peso{for every} \RSW \in \cI_0(\k \coma d) \ ,
\eeq
is a homeomorphism. Hence, using this map $\gamma$ 
we can endow $\PROv$ with the differential 
structure which makes $\gamma$ a diffeomorphism. With this structure, 
each space $\PROv$ becomes a submanifold of $\RS$. 
 It is in this  sense in which 
 the map $\RSO: \PROv \to \gld_\tau$  
defined in Eq. \eqref{map RSO} is smooth. Indeed, 
we have that 
\beq \label{tres cachos} 
\RSO = S_\v \circ \Phi \circ \gamma \inv  \ ,
\eeq
where $\Phi:\cI(\k\coma d)\rightarrow \gr(\k\coma d)$ is the smooth
map defined in Lemma \ref{fundamental}. 
Now we can give an answer to the problem posed in the beginning of 
this section. \EOE
\end{num}

\begin{fed}\rm 
Let $\v=(v_i)_{i\in \IM}\in \R_{>0}^m\,$ and $\RSV\in  \PROv(m,\k,d)$. 
We say that the system $\cV$ is {\bf irreducible} if
$C_\cV\igdef \{V_i^*V_i : i\in \IM\}'=\C\,I_d\,$. \EOE
\end{fed}

\pausa
In Section \ref{Exas} we show examples of reducible and irreducible systems. 
See also Remark \ref{redu}.

\begin{teo} \label{lema secciones}
Let $\v=(v_i)_{i\in \IM}\in \R_{>0}^m\,$ 
and $\tau = \suml\subim v_i ^2 \, k_i \, $. 
If we fix an {\bf irreducible} system $\cV\in \PROv(m,\k,d)$, then   
the map $\RSO: \PROv \to \gld_\tau$  defined in Eq. \eqref{map RSO} 
has a smooth local cross section  
around $S_\cV$ which sends $S_\cV$ to $\cV$. 
\end{teo}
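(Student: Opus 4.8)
The plan is to exploit the factorization $\RSO = S_\v \circ \Phi \circ \gamma^{-1}$ from Eq.~\eqref{tres cachos}, and to build a local cross section of $\RSO$ as a composition of local cross sections of the three maps involved. Since $\gamma$ is a diffeomorphism by \eqref{la gama}, it suffices to produce a smooth local cross section of $S_\v \circ \Phi : \cI_0(\k\coma d) \to \gld_\tau$ around $S_\cV$ and then postcompose with $\gamma$. Write $\cV = \gamma(\cW_0)$ for a unique $\cW_0 \in \cI_0(\k\coma d)$, and set $\cP_0 = \Phi(\cW_0) \in \gr(\k\coma d)$, so that $S_\v(\cP_0) = S_{\cW_0 \text{-type}} = S_\cV$ (more precisely, $S_\cV = \sum_i v_i^2 W_{0,i}^* W_{0,i}$, which matches the defining expression after identifying $V_i = v_i W_{0,i}^*$).

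The key observation is that irreducibility of $\cV$ translates exactly into the hypothesis $\cP_0' = \C\, I_d$ needed for Lemma~\ref{citanosa}. Indeed, $C_\cV = \{V_i^*V_i : i\in\IM\}' = \{v_i^2 P_{R(V_i^*)} : i\in\IM\}' = \{P_{R(V_i^*)} : i\in\IM\}'$, and the projections $P_{R(V_i^*)}$ are precisely the components $P_i = W_{0,i}^* W_{0,i}$ of $\cP_0$ (using $V_i = v_i W_{0,i}^*$ with $W_{0,i}$ a coisometry, so $W_{0,i}^* W_{0,i} = P_{R(V_i^*)}$). Hence $\cV$ irreducible $\iff C_\cV = \C\,I_d \iff \cP_0' = \C\,I_d$. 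This is the step I expect to require the most care: matching the two notions of commutant and keeping the isometry/coisometry bookkeeping straight, together with verifying that $\cP_0$ indeed lands in the open set where Lemma~\ref{citanosa} applies (which it does, since $S_\v(\cP_0) = S_\cV \in \gld_\tau$, so $\cW_0 \in \cI_0(\k\coma d)$).

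Granting $\cP_0' = \C\,I_d$, Lemma~\ref{citanosa} gives a smooth local cross section $\sigma : \cU \to \gr(\k\coma d)$ of $S_\v$, defined on an open neighborhood $\cU$ of $S_\cV$ in $\cM_d(\C)^+_\tau$, with $\sigma(S_\cV) = \cP_0$. By openness (part (1) of the lemma, $S_\v(\cP_0) \in \gld_\tau$), after shrinking $\cU$ we may assume $\cU \inc \gld_\tau$. Next, Lemma~\ref{fundamental} provides a smooth local cross section $s$ of $\Phi$ around $\cP_0$ toward $\cW_0$, say $s : \cN \to \cI(\k\coma d)$ on an open neighborhood $\cN$ of $\cP_0$ with $\Phi \circ s = \mathrm{id}$ and $s(\cP_0) = \cW_0$. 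Shrinking $\cU$ once more so that $\sigma(\cU) \inc \cN$, the composition $s \circ \sigma : \cU \to \cI(\k\coma d)$ satisfies $\Phi \circ s \circ \sigma = \sigma$, hence $S_\v \circ \Phi \circ s \circ \sigma = S_\v \circ \sigma = \mathrm{id}_\cU$; moreover $s\circ\sigma(S_\cV) = s(\cP_0) = \cW_0 \in \cI_0(\k\coma d)$, so by continuity $s\circ\sigma$ maps a possibly smaller neighborhood into $\cI_0(\k\coma d)$. Finally define $\rho = \gamma \circ (s\circ\sigma) : \cU \to \PROv$. Then $\rho$ is smooth, $\RSO \circ \rho = S_\v \circ \Phi \circ \gamma^{-1} \circ \gamma \circ s \circ \sigma = S_\v \circ \Phi \circ s \circ \sigma = \mathrm{id}_\cU$, and $\rho(S_\cV) = \gamma(\cW_0) = \cV$. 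This $\rho$ is the desired smooth local cross section around $S_\cV$ sending $S_\cV$ to $\cV$, which completes the proof.
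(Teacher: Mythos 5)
Your proposal is correct and takes essentially the same route as the paper: it factors $\RSO = S_\v \circ \Phi \circ \gamma^{-1}$, identifies irreducibility of $\cV$ with the commutant condition $\cP_0' = \C\, I_d$ needed in Lemma \ref{citanosa}, and composes that cross section with the one for $\Phi$ from Lemma \ref{fundamental} and the diffeomorphism $\gamma$, exactly as in the paper's $\rho = \gamma \circ \beta \circ \alpha$ (including the shrinking of neighborhoods). The only slip is notational: with $W_{0,i} \in \cI(k_i \coma d)$ an isometry and $V_i = v_i\, W_{0,i}^*$, the components of $\cP_0 = \Phi(\cW_0)$ are $W_{0,i}\, W_{0,i}^* = P_{R(V_i^*)}$, not $W_{0,i}^*\, W_{0,i} = I_{k_i}$, which does not affect the argument.
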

\proof
We have to prove that 
there exists an open neighborhood $A$ of $S_\cV$ in $\gld_\tau$ and a smooth 
map $\rho : A \to \PROv$ such that $\RSO \,(\, \rho(S)\,) = S $ for every 
$S\in A$ and $\rho(S_\cV) = \cV$. 

\pausa
Denote by  $P_i = P_{R(V_i^*)}$ \fori \, , and consider the system 
$$
\gamma\inv(\cV) = \RSU \in \cI(\k\coma d) \peso{given by} U_i = v_i\inv \, V_i^*\in I(k_i \coma d) \ \ i \in \IM \  . 
$$
Observe that $\Phi(\cU) = \cP= \{P_i\}\subim \in  \gr(\k\coma d)\,$ and 
$ S_\v(\cP) = S_\cV\,$. By our hypothesis, we know that 
$\cP' = \{V_i^*V_i : i\in \IM\}'=\C\,I_d\,$. Let $\al : A \to \gr(\k\coma d)$ be the 
smooth section for the  map $S_\v:\gr(\k\coma d)\rightarrow \cM_d(\C)_\tau ^+$
 given by Lemma \ref{citanosa}. Hence $A$
is an open neighborhood of $S_\cV= S_\v(\cP)$ in $\gld_\tau\,$, and $\al (S_\cV) = \cP$. 

\pausa
Take now the cross section $\beta : B \to \cI(\k \coma d)$ for the map 
$\Phi:\cI(\k\coma d)\rightarrow \gr(\k\coma d)$  
given by Lemma \ref{fundamental}, such that  $B$ 
is an open neighborhood of $\cP$ in $\gr(\k\coma d)$, and that $\beta (\cP) = \cU$. 

\pausa
Finally we recall the diffeomorphism  
$\gamma : \cI_0(\k \coma d) \to \PROv$
defined in Eq. \eqref{la gama},  
where $\cI_0(\k \coma d) = \{\cW \in \cI(\k \coma d) : S_\v
\circ \Phi(\cW) \in \gld\}$ is an 
open subset of  $\cI(\k \coma d)$ such that $\cU\in \cI_0(\k \coma d)$. 
Note that $\gamma (\cU) = \cV$. 
Changing the first neighborhood $A$ by some smaller open set, 
we can define the announced smooth cross section for 
the map $\RSO$ by 
$$
\rho = \gamma \circ \beta \circ \al : A \inc \gld_\tau \to \PROv  \ .
$$ 
Following our previous steps, we see that $\rho(S_\cV) = \cV$ and that 
\beq
\RSO \stackrel{\eqref{tres cachos}}{=} 
S_\v \circ \Phi \circ \gamma \inv \ \implies 
\RSO (\rho (S)\,) = S \peso{for every} S\in  A  \ . 
\QEDP 
\eeq

\begin{rem}\label{las dist}\rm
In order to compute ``local" minimizers for different 
functions defined on $\RS$ or some of its subsets, we shall 
consider two different (pseudo) metrics: 
Given $\cV=\{V_i\}_{i\in \IM}$  and $\cW=\{W_i\}_{i\in \IM}\in \RS$,  
we recall the (punctual) metric defined in \ref{top en SP}: 
$$
d_P(\cV,\,\cW)=\left(\ \sum_{i\in \IM }\|V_i-W_i\|_{_2}^2\right)\rai =
\|T_\cV-T_\cW\|_{_2}=\|T_\cV^*-T_\cW^*\|_{_2}  \ . 
$$ 
We consider also a pseudo-metric 
defined by 
$ 
d_S(\cV,\,\cW)=\|S_\cV-S_\cW\| \ . 
$

\pausa 
Let $A\inc \RS$ and $f: A \to \R$ a continuous map. Fix  $\RSV\in A$. 
Since the map $\cV \mapsto S_\cV$ is continuous, 
it is easy to see that if $\cV$ is a local $d_S$ minimizer of $f$ over $A$, then  
$\cV$ is also a local $d_P$ minimizer. The converse needs not to be true. 

\pausa
Nevertheless, it is true under some assumptions: 
Theorem \ref{lema secciones} shows that if $\cV$ is a local $d_P$ minimizer 
of $f: \PROv \to \R$, 
in order to assure that $\cV$ is also a local $d_S$ minimizer it suffices
to assume that $ \{V_i^*V_i : i\in \IM\}' = \C\, I_d \,$, i.e. that 
$\cV$ is irreducible.  
\EOE
\end{rem}

\section{Spectral pictures}
Recall that $(\R_{+}^d)^\downarrow$ is the 
set of vectors $\mu \in \R_+^d$ with non negative and decreasing entries. If all
the entries are positive (i.e., if $\mu_d >0$), we write $\mu \in (\R_{>0}^d)^\downarrow$. 
Given $S\in \matrec{d}^+$, we write $\la(S) \in (\R_{+}^d)^\downarrow$ the 
decreasing vector of eigenvalues of $S$, counting multiplicities. 
We denote by $S^\dag$ 
 the Moore-Penrose pseudo-inverse of $S$.
We shall also use the following notations: 
\ben
\item Given $x\in \C^n$ then $D(x) \in \mat $ denotes the 
diagonal matrix with main diagonal $x$. 
\item If $d\le n$ and $y \in \C^d$, we write $ (y\coma 0_{n-d})\in \cene$, where  
$0_{n-d}$ is the zero vector of $\C^{n-d}$. 
In this case, we denote by $D_n(y) = D\big( \, (y\coma 0_{n-d})\,\big) \in \cene$. 
\een 
Given  $\cA\inc \matpos$ we consider 
its {\bf spectral picture}:
$$ 
\Lambda(\cA)=\{\lambda(A):\ A\in \cA\}\subseteq (\R_+^d )^\downarrow \ ,
$$ 
We say that  $\Lambda(\cA)$ {\bf determines} 
$\cA$ whenever $A\in \cA$ if and only if 
$\lambda(A)\in \Lambda(\cA)$. It is easy to see that 
this happens 
if and only if the set $\cA$ is 
saturated with respect to unitary equivalence.

\subsection{The set of dual RS's}
\begin{fed}\rm
Let $\cV\in \RS$. We denote by 
\begin{equation}\label{defi espec} 
\Lambda(\cD(\cV)\,)=\{\lambda(S_\cW):\ \cW\in \cD(\cV)\}
\inc  (\R_{>0}^d)^\downarrow \ ,
\end{equation} that is, the spectral picture of the set of 
all dual RS's for $\cV$. 
\EOE
\end{fed}

\pausa
The following result gives a characterization of $\Lambda(\cD(\cV))$.

\begin{teo} \label{TP}
Let $\RSV \in \RS$ and $\mu\in (\R_{>0}^d)^\downarrow$. 
We denote by $n= \tr \,\k $. 
Then the following conditions are equivalent: 
\ben
\item\label{cond1} 
The vector $\mu \in \Lambda(\cD(\cV))$. 
\item\label{cond2} There exists an orthogonal projection 
$P\in \matrec{n} $ such that $\rk \,P=d$ and  
\beq\label{item2}
\lambda\left( P\, D_n(\mu )\, P\right)= 
\big(\, \lambda(S_\cV^{-1}),0_{n -d}\,\big) = 
\la (G_\cV^\dag)  \ ,
\eeq
where $G_\cV = T_\cV\, T_\cV^* \in \matnpos$ is the Gram matrix of $\cV$. 
\een
\end{teo}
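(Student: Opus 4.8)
The plan is to unwind both conditions through the parametrization of $\cD(\cV)$ by pseudoinverses of $T_\cV$, and to recognize that the spectral data of the dual $S_\cW$ is exactly the compression of a fixed diagonal matrix to a rank-$d$ subspace. First I would recall from Remark \ref{MP sost} that $\cW \in \cD(\cV)$ if and only if the synthesis operator $T_\cW^*$ is a (not necessarily Moore-Penrose) pseudoinverse of $T_\cV$; equivalently, writing $P = P_{R(T_\cV)} \in \matrec{n}$, one has $T_\cW^* = T_{\cV^\#}^* + G$ with $G|_{R(T_\cV)} \equiv 0$ (this is the first description in Eq. \eqref{pepe}). Since $S_\cW = T_\cW^* T_\cW = (T_\cW^*)(T_\cW^*)^*$, the eigenvalues of $S_\cW$ are the squares of the singular values of the operator $T_\cW^*$, i.e. the nonzero part of $\lambda\big((T_\cW^*)(T_\cW^*)^*\big)$, which has rank exactly $d$ because $T_\cW^*$ is surjective.

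The key computational step is to pass to the Gram side. Consider $H_\cW \igdef (T_\cW^*)^* T_\cW^* = T_\cW T_\cW^* \in \matnpos$; this has rank $d$ and the same nonzero eigenvalues as $S_\cW$, so $\lambda(H_\cW) = \big(\lambda(S_\cW), 0_{n-d}\big)$. The condition that $T_\cW^*$ is a pseudoinverse of $T_\cV$, namely $T_\cW^* T_\cV = I_\cH$, is equivalent to saying $T_\cV T_\cW^*$ is the identity on $R(T_\cV)$, hence (being a projection times the relevant operator) that $T_\cW T_\cV^* = Q$ for some idempotent $Q$ with $R(Q) = R(T_\cW)$ acting appropriately — more cleanly, $T_\cV T_\cW^*$ restricted to $R(T_\cV)$ is invertible with a controlled form. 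I would instead argue directly: parametrize $\cW = U \cdot \cV^\#$ with $P U^* P = P$ (second line of Eq. \eqref{pepe}); then $T_\cW = U T_{\cV^\#} = U S_\cV^{-1} T_\cV$, so $H_\cW = U S_\cV^{-1} T_\cV T_\cV^* S_\cV^{-1} U^* = U S_\cV^{-1} G_\cV S_\cV^{-1} U^*$. Now $G_\cV = T_\cV T_\cV^*$ has rank $d$ and, restricted to $R(T_\cV)$, equals $T_\cV T_\cV^*$ which is conjugate to $S_\cV$; one checks $S_\cV^{-1} G_\cV S_\cV^{-1} = G_\cV^\dag$ by a direct verification of the Moore-Penrose axioms (using $R(G_\cV) = R(T_\cV)$ and $T_\cV^* T_\cV = S_\cV$). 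Thus $H_\cW = U\, G_\cV^\dag\, U^*$ for some $U \in \matinv(\C^n)$ satisfying $P U^* P = P$.

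The remaining point is to convert "$\exists U \in \glk$ with $P U^* P = P$ and $\lambda(U G_\cV^\dag U^*) = \mu^{\mathrm{ext}}$" into "$\exists$ orthogonal projection $\tilde P$ of rank $d$ with $\lambda(\tilde P D_n(\mu) \tilde P) = \la(G_\cV^\dag)$", where $\mu^{\mathrm{ext}} = (\mu, 0_{n-d})$. For the direction \ref{cond1} $\Rightarrow$ \ref{cond2}: diagonalize $H_\cW = W^* D_n(\mu) W$ with $W$ unitary; then $D_n(\mu) = W H_\cW W^* = (WU) G_\cV^\dag (WU)^*$, and since $G_\cV^\dag \ge 0$ has rank $d$, writing its range projection transported by $(WU)$ gives a rank-$d$ idempotent; polar-decomposing $WU$ and using the constraint $PU^*P=P$ lets me replace the invertible conjugator by a unitary one at the price of passing to the compression, yielding an orthogonal projection $\tilde P$ (the range projection of the appropriate rank-$d$ piece) with $\tilde P D_n(\mu) \tilde P$ unitarily equivalent to $G_\cV^\dag$ on its support. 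The converse \ref{cond2} $\Rightarrow$ \ref{cond1} runs backwards: given such $\tilde P$, build an isometry $\cK \hookleftarrow \C^d$ onto $R(\tilde P)$, use it to manufacture an operator $A \in \lkh$ with $A A^* $ having the prescribed spectrum and $A$ a pseudoinverse of $T_\cV$ (here one must match $R(\tilde P)$ up with $R(T_\cV)$ via a unitary, which is possible precisely because both are $d$-dimensional), and then set $\cW = \cW_A$ as in \ref{diff RS}, giving $\cW \in \cD(\cV)$ with $\lambda(S_\cW) = \mu$.

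I expect the main obstacle to be the bookkeeping in the step that replaces the general invertible conjugator $U \in \glk$ (subject only to $P U^* P = P$) by an orthogonal projection: one is trading "invertible conjugation of the rank-$d$ positive matrix $G_\cV^\dag$" for "compression of a diagonal matrix to a rank-$d$ subspace", and making this precise requires carefully tracking which subspace plays which role and verifying that the freedom in $U$ off $R(T_\cV)$ is exactly enough to realize every rank-$d$ projection $\tilde P$. This is essentially a linear-algebra fact about when $\lambda(PBP)$ for $P$ a rank-$d$ projection can equal a prescribed vector (a Fan–Pall / interlacing type statement), combined with the identification $S_\cV^{-1} G_\cV S_\cV^{-1} = G_\cV^\dag$, which itself deserves a one-line check via the Moore-Penrose axioms.
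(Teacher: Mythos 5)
Your overall strategy --- identify duals with pseudoinverses of $T_\cV$, pass to the Gram matrix $G_\cW=T_\cW\,T_\cW^*$, and read condition (2) as a compression (Fan--Pall type) statement --- is the same as the paper's, and your key identity $T_{\cV^\#}\,T_{\cV^\#}^*=T_\cV\, S_\cV^{-2}\,T_\cV^*=G_\cV^\dag$ is correct (though as literally written $S_\cV^{-1}G_\cV S_\cV^{-1}$ does not typecheck, since $G_\cV$ is $n\times n$ and $S_\cV$ is $d\times d$; also $T_{\cV^\#}=T_\cV S_\cV^{-1}$, not $S_\cV^{-1}T_\cV$). The genuine gap is in the pivotal step of the forward implication. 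From $G_\cW=U\,G_\cV^\dag\, U^*$ with $U$ merely invertible, you cannot reach the spectral statement by ``polar-decomposing $WU$'': conjugation by an invertible matrix does not preserve eigenvalues, the idempotent obtained by transporting the range projection of $G_\cV^\dag$ by $WU$ is not orthogonal, and the natural orthogonal candidate fails --- e.g.\ the orthogonal projection onto $R(WU\,G_\cV^\dag)$ is the range projection of $D_n(\mu)$ itself, so it compresses $D_n(\mu)$ to $D_n(\mu)$, whose spectrum is $(\mu,0_{n-d})$ and not $\la(G_\cV^\dag)$. What closes the argument (and what the paper does in one line) is that the compression of $G_\cW$ to $R(T_\cV)$ equals $G_\cV^\dag$ \emph{exactly}: from $T_\cW^*T_\cV=I_\cH$ one gets $G_\cV\, G_\cW\, G_\cV=G_\cV$, hence $Q\,G_\cW\,Q=G_\cV^\dag$ with $Q=P_{R(T_\cV)}$; equivalently, in your parametrization, $QUQ=Q$ together with $G_\cV^\dag=Q\,G_\cV^\dag\, Q$ gives $Q\,U G_\cV^\dag U^*\,Q=G_\cV^\dag$. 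Taking the unitary $W$ with $D_n(\mu)=W G_\cW W^*$ and setting $P=WQW^*$ then yields $P\,D_n(\mu)\,P=W G_\cV^\dag W^*$, which is condition (2).

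The converse as you sketch it is incomplete at the same pressure point. Knowing only $\lambda(\tilde P D_n(\mu)\tilde P)=\la(G_\cV^\dag)$ gives unitary equivalence, not equality; so after choosing any $\cU$ with $\la(S_\cU)=\mu$ and rotating, one obtains a unitary $W$ with $W^*(Q\,G_\cU\,Q)W=G_\cV^\dag$, and one must still check that $W^*QW=P_{R(T_\cV)}$ (using $\rk Q=d$ and $R(G_\cV^\dag)=R(T_\cV)$) in order to conclude $G_\cV(W^*G_\cU W)G_\cV=G_\cV$. Even then, surjectivity of $T_\cV^*$ yields only $V_d:=T_\cU^*\,W\,T_\cV\in\cU(d)$, not the identity, so the candidate system must be corrected to $\cW=\{P_{\cK_i}W T_\cU V_d\}_{i\in\IM}$ before $T_\cW^*T_\cV=I_\cH$ holds. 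Your phrase ``match $R(\tilde P)$ with $R(T_\cV)$ via a unitary, which is possible because both are $d$-dimensional'' skips both of these verifications; without the final unitary correction the constructed system satisfies only $T_\cW^*T_\cV\in\cU(d)$ and need not be a dual of $\cV$.
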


\proof
Let $\cW\in \cD(\cV)$ 
with $\lambda(S_\cW)=\mu$. 
Then $T_\cW^*\,T_\cV=I$ and 
\begin{equation}\label{ecua comp}
G_\cV \, G_\cW \, G_\cV  = T_\cV\,(T_\cV^*\, T_\cW)\,(T_\cW^*\,T_\cV)\,T_\cV^* = 
T_\cV\,T_\cV^* = G_\cV  \  
\implies \ 
Q \, G_\cW \,  Q = G_\cV^\dagger\  , 
\end{equation} 
where $Q = G_\cV \, G_\cV ^\dag = P_{R(T_\cV)}\,$. 
Note that $\rk \, Q = \rk \, T_\cV = d$, 
since $\cV $ is a  RS. 
Also
$$
\la(G_\cW) = \lambda(T_\cW\,T_\cW^*)=(\lambda(T_\cW^*\,T_\cW),0_{n -d})=
(\lambda(S_\cW),0_{n -d}) = (\mu \coma 0_{n -d}) \ . 
$$ 
Then there exists  $U\in \matu $ such that 
\begin{equation}\label{ecua uni}
U^* \, D(\mu \coma 0_{n -d})  \, U =  U^* \, D_n(\mu ) \, U = 
U^*\ D_n\big(\,  \lambda(S_\cW) \,\big) \ U=T_\cW\,T_\cW^*  = G_\cW \ .
\end{equation} 
Let $P = U\,Q\,U^*$. Note that $\rk \, P =\rk \, Q = d$.  
Using \eqref{ecua comp} and \eqref{ecua uni} we get the  item 2 :
$$
\lambda\left( P\, D_n(\mu )\, P\right)= 
\lambda( U\,Q\,U^* \, D_n(\mu ) \, U\,Q\,U^*)\stackrel{\eqref{ecua uni}}{=}
 \la(Q \, G_\cW \,  Q)\stackrel{\eqref{ecua comp}}{=}
\lambda( G_\cV^\dagger)=(\lambda(S_\cV^{-1}) \coma 0_{n -d})\ .$$ 
Conversely, assume  that there exists the projection $P\in \matposn$ of item 2. 
Observe that there always exists $\cU\in \RS$ such that 
$\la(S_\cU)= \la(T_\cU^*\,T_\cU)=\mu$. Then 
$$
\la( G_\cU ) = \la(T_\cU\,T_\cU^*)=(\mu \coma 0_{n -d}) \in (\R_{+}^n)^\downarrow \ .
$$  
Let  $V\in \matu $ such that $V^*\, G_\cU\, V=D_n(\mu)$. 
Denote by $Q= VPV^*$. Then 
we get that 
\begin{equation}\label{ecua algo} 
\lambda(Q\, G_\cU\, Q) = \la(P \, V^*\, G_\cU\, V \, P)  =
\lambda\left( P\, D_n(\mu )\, P\right) \stackrel{\eqref{item2}}{=} 
(\lambda(S_\cV^{-1}),0_{n -d}) = \la(G_\cV^\dag)\ .
\end{equation} 
Then 
there exists $W\in \matu $ such that 
$ W^*\,(Q\, G_\cU\, Q)\, W=G_\cV^\dagger\,$. 
Observe that 
$$
\rk \, Q = d \peso{and} W^*(R(Q)\,) \Inc R(G_\cV^\dagger) = R(G_\cV) =R(T_\cV) 
 \ \implies  \ W^*QW = P_{R(T_\cV)} \ .
$$ 
Moreover, $G_\cV \, G_\cV ^\dag = G_\cV ^\dag \, G_\cV  = P_{R(G_\cV)} = 
P_{R(T_\cV)} = W^*QW $. Then 
$$
\barr{rl}
G_\cV \ & = G_\cV \, G_\cV^\dag \, G_\cV = G_\cV \, (W^*\,Q\, G_\cU\, Q\, W) \, G_\cV
\\&\\
& = 
G_\cV \,P_{R(G_\cV)}\, (W^* \,G_\cU \,W)  \, P_{R(G_\cV)} \,  G_\cV  
= G_\cV \, (W^* \,G_\cU \,W)  \,  G_\cV  \ . \earr
$$
We can rewrite this fact as 
$T_\cV \big( \, T_\cV^* \, W^* T_\cU \, T_\cU^* \, W \, T_\cV \, \big) \, T_\cV^* = 
T_\cV \, T_\cV^* \,$. Since $T_\cV^*$ is surjective, 
\beq\label{casi dual}
(T_\cV^* \, W^* T_\cU) \, (T_\cU^* \, W \, T_\cV) = I_\cH \ \implies 
V_d = T_\cU^* \, W \, T_\cV \in \cU(d) \ .
\eeq
Finally, take $\cW = \{P_{\cK_i}\, W\, T_\cU \, V_d\}_{i\in \IM}  \in \SP$.  
Observe that 
$$
S_\cW =  \sum \subim V_d^* \, T_\cU^*\,W^*\, P_{\cK_i}\, W\,   T_\cU \, V_d =
 V_d^* \, T_\cU^*\, T_\cU \, V_d = V_d^* \, S_\cU \, V_d \in \gld \ .
$$
Then $\cW \in \RS$ and $\la(S_\cW) = \la(S_\cU) = \mu $. 
Similarly,  $T_{\cW} = W \, T_\cU \, V_d\,$. By  Eq. \eqref{casi dual}, 
we deduce that $T_\cW^* \, T_\cV = V_d^* \, T_\cU^* \, W \, T_\cV = V_d^*\, V_d = I_\cH\,$, 
so that $\cW\in \cD(\cV)$.  
\QED

\begin{rem} \label{FP}
Let $\cV\in \RS$ and $\mu\in (\R_{>0}^d)^\downarrow$ as in Theorem  \ref{TP}. 
It turns out that condition \eqref{item2} can be characterized in terms of 
interlacing inequalities. 

\pausa
More explicitly, let us denote by 
\bce
$\gamma = \mu^\uparrow \in (\R_{>0}^d)^\uparrow$ \ ,  \ \ \ so that  \ \ \ 
$\gamma_i = \mu_{d-i+1}$  \ \ for every  \ \ $i \in \IN{d}\ $. 
\ece
Similarly, we denote by 
$\rho = \la(S_\cV^{-1})^\uparrow  = (\la_i(S_\cV)\inv)\subim \in (\R_{>0}^d)^\uparrow$.  K. Fan and G. Pall showed that the existence of a projection $P$ satisfying \eqref{item2} 
is equivalent to the following inequalities: 
\begin{enumerate}
\item $\mu_{d-i+1} = 
\gamma_i\geq \rho_i= \lambda_{i}(S_\cV)^{-1}$ for every  $i \in \IN{d}\,$. 
\item If $n = \tr \, \k < 2\, d$ and we denote $r = 2\, d-n \in \N$, then  
$$
\gamma_i  \ \le \ \rho _{i+n-d}   = \lambda_{i+n-d}(S_\cV)^{-1} 
= \lambda_{2\,d  -n-i+1}(S_\cV^{-1}) 
\peso{if} 1\le i \le r 
\ .
$$
\end{enumerate}
This fact together with Theorem \ref{TP} give a complete description 
of the spectral picture of the RS operators $S_\cW$ for every $\cW\in \cD(\cV)$, which we 
write as follows. 
\EOE
\end{rem}

\begin{cor}Let $\RSV \in \RS$, $n = \tr \, \k$ and fix $\mu \in  (\R_{>0}^d)^\downarrow $. Then, the set $\Lambda(\cD(\cV))$ can be characterized as follows: 
\ben 
\item If $n\ge 2\, d$, we have that 
\beq\label{mayor 2d}
\mu  \in \Lambda(\cD(\cV)) \iff 
\mu_j \ge \la_j (S_\cV\inv) = \lambda_{d-j+1}(S_\cV)\inv  \peso{for every} j \in \IN{d} \ .
\eeq
\item If $n<2\,d$, then $\mu \in  \Lambda(\cD(\cV)) \iff  \mu $ satisfies \eqref{mayor 2d} and
also the following conditions: 
\beq\label{menor 2d}
\mu_i^\uparrow = \mu_{d-i+1}  \ \le \  \lambda_{i+n-d}(S_\cV)^{-1} 
= \lambda_{2\,d  -n-i+1}(S_\cV^{-1}) 
\peso{for every}  i \le  2\, d-n  \ .
\eeq
\een
\end{cor}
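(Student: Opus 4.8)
The plan is to derive the Corollary directly from Theorem~\ref{TP} together with the Fan--Pall interlacing description recalled in Remark~\ref{FP}. The only thing that needs to be done is to translate the conditions of Remark~\ref{FP}, which are written in terms of the increasing rearrangements $\gamma = \mu^\uparrow$ and $\rho = \lambda(S_\cV^{-1})^\uparrow$, into statements about the decreasing vector $\mu$ and the eigenvalues $\lambda_j(S_\cV)$ as they appear in Eqs.~\eqref{mayor 2d} and \eqref{menor 2d}.

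First I would record the dictionary between the two orderings. For the increasing vector $\rho$ we have $\rho_i = \lambda_i(S_\cV^{-1})$ (since the eigenvalues of $S_\cV^{-1}$ in decreasing order are the reciprocals of those of $S_\cV$ in increasing order, so that the decreasing labelling of $S_\cV^{-1}$ is already the increasing rearrangement of the reciprocals); concretely $\rho_i = \lambda_i(S_\cV^{-1}) = \lambda_{d-i+1}(S_\cV)^{-1}$. For $\mu \in (\R_{>0}^d)^\downarrow$ we have $\gamma_i = \mu^\uparrow_i = \mu_{d-i+1}$, equivalently $\mu_j = \gamma_{d-j+1}$.

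Next, for item~1 of the Corollary I would assume $n \ge 2d$, so only condition~(1) of Remark~\ref{FP} is active: it reads $\gamma_i \ge \rho_i$ for all $i \in \IN d$. Substituting the dictionary, $\gamma_i = \mu_{d-i+1}$ and $\rho_i = \lambda_i(S_\cV^{-1})$, and reindexing via $j = d-i+1$ (so $i = d-j+1$ and $i$ ranges over $\IN d$ iff $j$ does), this becomes $\mu_j \ge \lambda_{d-j+1}(S_\cV^{-1}) = \lambda_j(S_\cV)^{-1}$ for every $j \in \IN d$. Comparing with how $\lambda_j(S_\cV^{-1})$ was defined ($=\lambda_{d-j+1}(S_\cV)^{-1}$, consistent with the paper's convention that $\lambda(\cdot)$ is the decreasing vector), this is exactly \eqref{mayor 2d}. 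Combined with the equivalence (\ref{cond1}) $\iff$ (\ref{cond2}) of Theorem~\ref{TP} and the Fan--Pall equivalence of Remark~\ref{FP}, this proves item~1.

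Finally, for item~2 I would assume $n < 2d$ and set $r = 2d - n$ as in Remark~\ref{FP}. Condition~(1) there is unchanged and yields \eqref{mayor 2d} as above. Condition~(2) reads $\gamma_i \le \rho_{i+n-d}$ for $1 \le i \le r$; substituting $\gamma_i = \mu_{d-i+1} = \mu_i^\uparrow$ and $\rho_{i+n-d} = \lambda_{i+n-d}(S_\cV^{-1}) = \lambda_{d-(i+n-d)+1}(S_\cV)^{-1} = \lambda_{2d-n-i+1}(S_\cV)^{-1}$, and noting that the range $1 \le i \le r$ is precisely $i \le 2d-n$, gives exactly \eqref{menor 2d}. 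Since in this regime $i+n-d$ ranges in $\{1,\dots,r\}$ which is a valid index set for $\rho$ (as $r \le d$ because $n \ge d$), no boundary issue arises. Assembling both conditions and invoking Theorem~\ref{TP} and Remark~\ref{FP} once more completes the proof. I do not expect a genuine obstacle here; the only care needed is bookkeeping with the two rearrangement conventions and checking that the index set $\{i : 1 \le i \le r\}$ in Remark~\ref{FP}(2) matches $\{i : i \le 2d-n\}$ in \eqref{menor 2d} and that all indices invoked are legitimate, which follows from $d \le n < 2d$.
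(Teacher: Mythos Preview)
Your approach is exactly the paper's: invoke Theorem~\ref{TP} and translate the Fan--Pall conditions of Remark~\ref{FP} from increasing to decreasing labels. However, there is a bookkeeping slip in your dictionary for $\rho$ that propagates to wrong final formulas. By definition $\rho = \lambda(S_\cV^{-1})^\uparrow$, and since $\lambda(\cdot)$ denotes the \emph{decreasing} vector, one has $\rho_i = \lambda_{d-i+1}(S_\cV^{-1}) = \lambda_i(S_\cV)^{-1}$ (as the paper writes in Remark~\ref{FP}), \emph{not} $\rho_i = \lambda_i(S_\cV^{-1}) = \lambda_{d-i+1}(S_\cV)^{-1}$ as you record. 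Your parenthetical justification is also off: the decreasing labelling of $S_\cV^{-1}$ is the \emph{reverse} of the increasing rearrangement of the reciprocals, not the same vector.

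Because of this, your item~1 conclusion reads $\mu_j \ge \lambda_{d-j+1}(S_\cV^{-1}) = \lambda_j(S_\cV)^{-1}$, whereas \eqref{mayor 2d} asserts $\mu_j \ge \lambda_j(S_\cV^{-1}) = \lambda_{d-j+1}(S_\cV)^{-1}$; these are not the same set of inequalities (e.g.\ for $d=2$ and $\sigma(S_\cV)=\{4,1\}$, the paper's condition allows $\mu=(2,\tfrac12)$ while yours forbids it). The same swap appears in your item~2, where you obtain $\rho_{i+n-d} = \lambda_{i+n-d}(S_\cV^{-1})$ instead of the correct $\lambda_{i+n-d}(S_\cV)^{-1}$, so your displayed expression does not match \eqref{menor 2d} either. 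Once you correct the single identity to $\rho_i = \lambda_i(S_\cV)^{-1}$ and rerun the reindexing, everything lines up with \eqref{mayor 2d} and \eqref{menor 2d} and the proof goes through as you intended.
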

\proof 
It is a direct consequence of Theorem \ref{TP} and the 
Fan-Pall inequalities described in Remark \ref{FP}. \QED

\begin{cor}\label{La conv} 
Let $\cV \in \RS$.  Then $ \Lambda(\cD(\cV))$ is a convex set.
\end{cor}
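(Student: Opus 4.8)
The plan is to deduce convexity directly from the explicit description of $\Lambda(\cD(\cV))$ given in the preceding Corollary, rather than working with the geometric parametrization of $\cD(\cV)$. Recall from Remark \ref{MP sost} that $\cD(\cV)$ itself is a convex subset of $\SP$ (the set of pseudoinverses of $T_\cV$ is convex); however, the eigenvalue map $\cW \mapsto \lambda(S_\cW)$ is far from linear, so convexity of $\cD(\cV)$ does not immediately transfer to its spectral picture. Instead I would observe that the preceding Corollary characterizes $\Lambda(\cD(\cV))$, inside the cone $(\R_{>0}^d)^\downarrow$, by a finite list of inequalities, and I would check that each of these inequalities defines a convex region.

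Concretely, first I would recall that $(\R_{>0}^d)^\downarrow$ is itself convex: the constraints $\mu_1 \ge \mu_2 \ge \dots \ge \mu_d > 0$ are linear (and one open), so their intersection is convex. Next, the inequalities in Eq. \eqref{mayor 2d}, namely $\mu_j \ge \lambda_{d-j+1}(S_\cV)^{-1}$ for each $j \in \IN{d}$, are linear in $\mu$ and hence cut out a convex set. When $n < 2d$ there are the extra conditions \eqref{menor 2d}: for $i \le 2d-n$ one requires $\mu_{d-i+1} \le \lambda_{2d-n-i+1}(S_\cV^{-1})$, which is again a linear inequality in the coordinates of $\mu$. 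Therefore, in both the case $n \ge 2d$ and the case $n < 2d$, the set $\Lambda(\cD(\cV))$ is an intersection of finitely many half-spaces (some open, some closed) with the convex cone $(\R_{>0}^d)^\downarrow$, and so it is convex.

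The only point requiring a small amount of care is to make sure I am invoking the right characterization: the preceding Corollary already establishes the equivalence between membership in $\Lambda(\cD(\cV))$ and the Fan--Pall-type inequalities, so there is no need to revisit Theorem \ref{TP} or the projection-theoretic reformulation. Thus the argument is essentially a one-line observation — $\Lambda(\cD(\cV))$ is a polyhedral slice of a convex cone — and I do not anticipate any genuine obstacle; the ``hard work'' was already done in Theorem \ref{TP} and the Fan--Pall inequalities of Remark \ref{FP}. I would write the proof as: by the preceding Corollary, $\Lambda(\cD(\cV))$ is the set of $\mu \in (\R_{>0}^d)^\downarrow$ satisfying a finite system of linear inequalities, hence it is convex. \QED
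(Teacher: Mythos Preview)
Your proposal is correct and follows essentially the same approach as the paper: both observe that by the preceding Corollary, $\Lambda(\cD(\cV))$ is cut out of the convex set $(\R_{>0}^d)^\downarrow$ by the linear inequalities \eqref{mayor 2d} and \eqref{menor 2d}, and is therefore convex. The paper's proof is just the one-line version of what you wrote.
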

\proof It is clear that the inequalities given in Eqs. \eqref{mayor 2d} and \eqref{menor 2d}
are preserved by convex combinations. Observe that also the set $(\R_{>0}^d)^\downarrow $
is convex. \QED

\begin{cor}\label{unico FFP}
Let $\RSV \in \RS$. 
If $\cW\in \cD(\cV)$ then 
\begin{equation}\label{cot inf}
\FP(\cW) \igdef \tr \, S_\cW^2  \geq \tr \, S_\cV^{-2}=\sum_{i=1}^d\lambda(S_\cV)_i^{-2}  = 
\FP(\cV^\#)\ .
\end{equation} 
Moreover,  $\cV^\#$ is the \rm unique \it 
element of $\cD(\cV)$ which attains the lower bound in \eqref{cot inf}. 
\end{cor}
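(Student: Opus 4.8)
The plan is to use the spectral-picture characterization just established, since the quantity $\FP(\cW) = \tr\, S_\cW^2 = \sum_{j=1}^d \lambda_j(S_\cW)^2$ depends only on $\lambda(S_\cW) \in \Lambda(\cD(\cV))$. First I would reduce the problem to a pointwise minimization over the set $\Lambda(\cD(\cV))$: minimizing $\FP$ over $\cD(\cV)$ is the same as minimizing $\mu \mapsto \sum_{j=1}^d \mu_j^2$ over $\mu \in \Lambda(\cD(\cV))$, because $\cV^\# \in \cD(\cV)$ realizes $\lambda(S_{\cV^\#}) = \lambda(S_\cV^{-1})$ (by Eq.~\eqref{SMP}), and conversely any $\cW \in \cD(\cV)$ with $\lambda(S_\cW) = \lambda(S_\cV^{-1})$ must actually equal $\cV^\#$ — this last uniqueness claim is the delicate part and I return to it below.

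Next I would prove the inequality. By the previous corollary, whether or not $n \ge 2d$, every $\mu \in \Lambda(\cD(\cV))$ satisfies $\mu_j \ge \lambda_j(S_\cV^{-1})$ for every $j \in \IN{d}$ (this is Eq.~\eqref{mayor 2d}). Since all entries involved are positive, squaring preserves these inequalities, so $\sum_{j=1}^d \mu_j^2 \ge \sum_{j=1}^d \lambda_j(S_\cV^{-1})^2 = \tr\, S_\cV^{-2} = \FP(\cV^\#)$, using $S_{\cV^\#} = S_\cV^{-1}$ from Eq.~\eqref{SMP}. This gives Eq.~\eqref{cot inf}.

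For the equality case, suppose $\cW \in \cD(\cV)$ attains the bound, i.e. $\sum_{j=1}^d \lambda_j(S_\cW)^2 = \sum_{j=1}^d \lambda_j(S_\cV^{-1})^2$. Combined with the coordinatewise inequalities $\lambda_j(S_\cW) \ge \lambda_j(S_\cV^{-1}) > 0$, equality of the sums of squares forces $\lambda_j(S_\cW) = \lambda_j(S_\cV^{-1})$ for every $j$, hence $S_\cW = U^* S_\cV^{-1} U$ for some unitary $U$, and in particular $\tr\, S_\cW = \tr\, S_\cV^{-1}$. Now I would exploit the defining relation $T_\cW^* T_\cV = I_\cH$: writing $\cW^\# = \{W_i S_\cW^{-1}\}$, one has $T_{\cW}^* = S_\cW (T_\cW^\#)^*$, and more usefully one computes that $T_\cV^* T_\cW$ is a projection-like object. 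The cleanest route: from $T_\cW^* T_\cV = I$ and $S_{\cV^\#} = S_\cV^{-1}$, consider $\cW - \cV^\#$; since $T_{\cV^\#}^* T_\cV = I$ as well, $G := T_\cW^* - T_{\cV^\#}^*$ satisfies $G T_\cV = 0$, i.e. $G|_{R(T_\cV)} = 0$, so $T_\cW^* = T_{\cV^\#}^* + G$ with $R(G^*) \perp R(T_\cV)$. Then $S_\cW = T_\cW^* T_\cW = S_\cV^{-1} + G G^* $ (the cross terms $T_{\cV^\#}^* \text{(adjoint of }G) $ vanish because $\ker T_{\cV^\#}^* = R(T_\cV)^\perp \supseteq R(G^*)$, hence $G (T_{\cV^\#})^{**}\cdots$ — more precisely $T_{\cV^\#} G^* $ has image... let me just say: $G T_{\cV^\#}$-type products vanish since $T_{\cV^\#}$ maps into $R(S_\cV^{-1}T_\cV^*) = R(T_\cV^*) $ and $G^*$ kills... ). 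Concretely $S_\cW = S_\cV^{-1} + G G^* \ge S_\cV^{-1}$ in the positive order, and taking traces, $\tr\, S_\cW = \tr\, S_\cV^{-1} + \tr\, GG^* = \tr\, S_\cV^{-1} + \|G\|_2^2$. Since we have shown $\tr\, S_\cW = \tr\, S_\cV^{-1}$, we get $\|G\|_2 = 0$, hence $G = 0$ and $T_\cW^* = T_{\cV^\#}^*$, which by injectivity of $\cW \mapsto T_\cW^*$ (Remark~\ref{MP sost}) gives $\cW = \cV^\#$. The main obstacle is justifying the orthogonality that makes the cross terms in $S_\cW = (T_{\cV^\#}^* + G)(T_{\cV^\#}^* + G)^*$ vanish; this follows because $R(G^*) \subseteq R(T_\cV)^\perp = \ker T_{\cV^\#}^*$ forces $T_{\cV^\#} G^* = 0$ and its adjoint $G T_{\cV^\#}^* = 0$, so the expansion collapses to $S_{\cV^\#} + GG^* = S_\cV^{-1} + GG^*$ as claimed.
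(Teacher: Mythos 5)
Your proposal is correct and follows essentially the same route as the paper: the bound comes from the Fan--Pall-type inequalities \eqref{mayor 2d}, and uniqueness from writing $T_\cW^* = T_{\cV^\#}^* + G$ with $G\,T_\cV = 0$ (hence $G\,T_{\cV^\#}=0$, since $R(T_{\cV^\#})=R(T_\cV)$), so that $S_\cW = S_{\cV^\#} + GG^*$ and the forced equality $\tr S_\cW = \tr S_{\cV^\#}$ gives $G=0$. The only blemish is a notational slip in the middle about which cross products vanish (it is $T_{\cV^\#}^*G^* = 0$ and $G\,T_{\cV^\#} = 0$), which your final sentence essentially corrects.
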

\begin{proof}
The inequality given in Eq. \eqref{cot inf} is a direct consequence of \eqref{mayor 2d}. 
With respect to the uniqueness of $\cV^\#$, fix another  $\cW\in \cD(\cV)$. Then 
 the equalities $T_\cW^*\,  T_\cV = T_{\cV^\#}^*\,  T_\cV= I$ imply that 
$T_\cW^* = T_{\cV^\#}^* + A$, for some $A\in \lkh$ that
satisfies $ A\,  T_\cV =0$. 
With respect to $\cV^\#$, note that 
 $R(T_{\cV^\#}) =  R(T_\cV \, S_\cV\inv ) =  R(T_\cV) \inc \ker A$, so that 
also $A \, T_{\cV^\#} = 0$. Thus,
\beq\label{una mas}
\barr{rl}
\tr \, S_\cW & = \|T_{\cV^\#}^* + A\|_{_2}^2 = 
\tr \,\big(  T_{\cV^\#}^* T_{\cV^\#} \,\big) + \tr \,\big(  A  A^* \,\big)
+ 2\, \Preal \, \tr \,\big( A \, T_{\cV^\#} \,\big) 
 \\& \\ & = 
\tr \, S_{\cV^\#} + \|A\|_{_2}^2 
\ . 
\earr
\eeq
On the other hand, if the lower bound in Eq. \eqref{cot inf} 
is attained $\cW$, using \eqref{mayor 2d} we can deduce that 
$\la  (S_\cW) = \la(S_{\cV^\#})$. Then also 
$\tr \, S_\cW = \tr \, S_{\cV^\#}\,$. But the previous 
equality forces that in this case  
$A=0$ and hence $\cW=\cV^\#$.
\end{proof}

\subsection{RS operators of projective systems}
In this section we shall fix the parameters $(m,\k,d)$ and 
the sequence $\v=(v_i)_{i\in \IM}\in \R_{>0}^m\,$ of weights. 
Now we give some new notations: 
First, recall that the set of projective RS's with fixed set of weights $\v$ is  
$$
\PROv = \PROv(m,\k,d) = \Big\{ \, \{V_i\}\subim \in  \PRO  : \ 
\|V_i\|_{sp} =v_i   \, \text{ for every } i\in \IN{m} \Big\} \ .
$$
We consider the set of  operators $S_\cV$ for  $\cV\in \PROv$ 
and its spectral picture:
\begin{equation}\label{el Delta} 
\OPv \igdef  \{S_\cV:\ \cV\in \PROv\} \peso{and} 
\LOPv \igdef\{\la(S): S\in \OPv\}  \inc  (\R_{>0}^d)^\downarrow  \ .
\end{equation}
We shall give a characterization of the set $\LOPv$ in terms of the 
Horn-Klyachko's theory of sums of hermitian matrices. In order 
to do this we shall describe briefly the basic facts about the spectral
characterization obtained by Klyachko \cite{Klya} and Fulton \cite{Ful0}. 
Let 
$$
\mathcal K_d^r=\big\{(j_1,\ldots,j_r)\in (\IN{d}) ^{\, r} :\   j_1<j_2\ldots<j_r \big\} \, . 
$$
For $J=(j_1,\ldots,j_r)\in \mathcal K_d^r\,$, define the
associated partition $\lambda(J)=(j_r-r,\ldots,j_1-1) \,$.
For $r\in \IN{d-1}$ denote by $LR_d^{\,r}(m)$ the set of
$(m+1)$-tuples $(J_0,\ldots,J_m)\in (\mathcal K_d^r)^{m+1}$, such
that the Littlewood-Richardson coefficient of the associated
partitions $\lambda(J_0),\ldots,\lambda(J_m)$ is positive, i.e. one
can generate the Young diagram of $\lambda(J_0)$ from those of
$\lambda(J_1),\ldots,$ $\lambda(J_m)$ according to the
Littlewood-Richardson rule (see \cite{Ful0}).

\pausa
The theorem of Klyachko gives a characterization of the spectral
picture of the set of all sums 
of $m$  matrices in $\cH(d)$ with fixed given spectra, in terms on a series of inequalities 
involving the $(m+1)$-tuples in $LR_d^{\,r}(m)$ (see  
 \cite{Klya} for a detailed formulation). We give a description of this result
in the particular case where these $m$ matrices are multiples of projections: 

\pausa
\begin{lem}\label{EL K} \rm
Fix the parameters $(m, \k , d)$ and 
$\v\in  \R_{>0}^m\,$ 
$\mu \in (\R_{+}^m)^\downarrow\,$. Then there exists a sequence 
$\{P_i\}_{i\in \IM} \in \gr(\k\coma d)$
such that $\mu = \la \left( \ \sum\subim v_i ^2 \, P_i\right) $ \sii  
\begin{equation}\label{la ec posta}
\barr{rl}
 \tr \, \mu & = \ \sum \subim \, v_i^2 \, k_i   \peso{and} \sum_{i\in J_0}\mu_i \, \leq \, 
\sum_{i \in \, \IM}\,  v_i^2\, |\,J_i\cap  \IN{k_i} \,| \ , 
\earr
\end{equation}
for every $r\in \IN{d-1}$  and every $(m+1)$-tuple $(J_0,\ldots,J_m) \in 
LR_d^{\,r}(m)$. \QED
\end{lem}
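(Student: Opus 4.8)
The statement to prove is Lemma \ref{EL K}, which specializes Klyachko's characterization of the spectrum of a sum of Hermitian matrices to the case where each summand $v_i^2 P_i$ is a nonnegative multiple of an orthogonal projection of prescribed rank $k_i$. The strategy is to recognize that prescribing $P_i \in \gr(k_i\coma d)$ is exactly prescribing that the $i$-th summand is a Hermitian matrix whose spectrum is the fixed vector $\beta^{(i)} := v_i^2\,(\uno_{k_i}\coma 0_{d-k_i}) \in (\R_+^d)^\downarrow$ (that is, $k_i$ eigenvalues equal to $v_i^2$ and $d-k_i$ eigenvalues equal to $0$), and then applying the general Horn--Klyachko theorem verbatim. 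So the first step is to set up this dictionary: a sequence $\{P_i\}_{i\in\IM}\in\gr(\k\coma d)$ with $\mu = \la(\sum_i v_i^2 P_i)$ exists if and only if there exist Hermitian matrices $H_1,\dots,H_m$ with $\la(H_i) = \beta^{(i)}$ for each $i$ and $\la(\sum_i H_i)=\mu$, since every Hermitian matrix with spectrum $\beta^{(i)}$ is of the form $v_i^2 P_i$ for a rank-$k_i$ projection $P_i$ and conversely.

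**Second step: invoke Klyachko/Fulton.** By the theorem of Klyachko (existence) together with Fulton's theorem that the inequalities are also necessary (and the list of inequalities is indexed by $LR_d^{\,r}(m)$ as recalled just before the lemma), such $H_i$ exist if and only if the trace equality $\tr\,\mu = \sum_i \tr\,\beta^{(i)} = \sum_i v_i^2 k_i$ holds together with, for every $r\in\IN{d-1}$ and every $(m+1)$-tuple $(J_0,\dots,J_m)\in LR_d^{\,r}(m)$, the inequality
\beq\label{plan ineq}
\sum_{j\in J_0}\mu_j \ \le\ \sum_{i\in\IM}\ \sum_{j\in J_i}\beta^{(i)}_j\ .
\eeq
(The standard formulation of Klyachko's inequalities reads $\sum_{j\in J_0}\lambda_j(A_0)\le\sum_{i=1}^m\sum_{j\in J_i}\lambda_j(A_i)$ when $A_0=A_1+\cdots+A_m$; here $A_0$ has spectrum $\mu$ and $A_i$ has spectrum $\beta^{(i)}$.)

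**Third step: evaluate the right-hand side.** The only computation is to simplify $\sum_{j\in J_i}\beta^{(i)}_j$. Since $\beta^{(i)}$ has its first $k_i$ entries equal to $v_i^2$ and the rest equal to $0$, and $J_i\subset\IN{d}$, we get $\sum_{j\in J_i}\beta^{(i)}_j = v_i^2\,|J_i\cap\IN{k_i}|$. Substituting this into \eqref{plan ineq} yields exactly the inequality displayed in \eqref{la ec posta}, and the trace condition matches the one stated. This establishes both directions of the ``\sii'' and finishes the proof.

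**Expected main obstacle.** There is no deep obstacle: the content is entirely the correct bookkeeping of Klyachko's theorem, so the only real care needed is (i) making sure the inequalities are quoted with the right orientation and the right indexing convention — whether entries of $\mu$, $\beta^{(i)}$ are taken in increasing or decreasing order, and whether $J$ indexes positions from the top or the bottom — so that the final inequality comes out as $\sum_{j\in J_0}\mu_j \le \sum_i v_i^2|J_i\cap\IN{k_i}|$ rather than a reversed or complemented version; and (ii) checking that the reduction in the first step is genuinely an ``if and only if'', i.e. that there is no additional constraint hidden in requiring the $P_i$ to be honest projections rather than arbitrary Hermitian matrices with spectrum $\beta^{(i)}$ — but there is none, because the orbit of $v_i^2\,\diag{\uno_{k_i}\coma 0_{d-k_i}}$ under unitary conjugation is precisely $\{v_i^2 P : P\in\gr(k_i\coma d)\}$. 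Given those checks, the proof is a two-line application of the cited results.
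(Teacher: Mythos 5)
Your proposal is correct and coincides with the paper's treatment: the paper states this lemma as an immediate specialization of the Klyachko--Fulton characterization to summands with fixed spectrum $v_i^2$ (multiplicity $k_i$) and $0$ (multiplicity $d-k_i$), which is exactly your dictionary between rank-$k_i$ projections and that unitary orbit, followed by the evaluation $\sum_{j\in J_i}\beta^{(i)}_j=v_i^2\,|J_i\cap\IN{k_i}|$. No gap; your write-up simply makes explicit the bookkeeping the paper leaves to the reader.
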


\begin{pro}\label{con suf y nec}
Fix the parameters $(m, \k , d)$ and the vector  
 $\v\in  \R_{>0}^m\,$ of weights.  
Fix also a positive matrix  $S\in \gld$.
Then, 
$$
S\in \OPv \iff \la(S) \in \LOPv \iff 
\la(S)  \ \ \mbox{satisfies Eq. \eqref{la ec posta} .}
$$
\end{pro}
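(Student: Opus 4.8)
The plan is to prove the chain of equivalences by combining the geometric description of $\PROv$ from Section \ref{RG} with Lemma \ref{EL K}. The middle equivalence $S \in \OPv \iff \la(S) \in \LOPv$ is essentially formal: by definition $\la(S) \in \LOPv$ means $\la(S) = \la(S_\cV)$ for some $\cV \in \PROv$, so we must show that $\OPv$ is saturated with respect to unitary equivalence. This follows because if $\cV = \{V_i\}\subim \in \PROv$ and $U \in \matu$, then $\cV \cdot U = \{V_i U\}\subim$ again lies in $\PROv$ (the condition $V_i V_i^* = v_i^2 I_{\cK_i}$ is untouched by $U$ on the right, and $\|V_iU\|_{sp} = \|V_i\|_{sp} = v_i$), while $S_{\cV\cdot U} = U^* S_\cV U$. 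Hence every matrix unitarily equivalent to an element of $\OPv$ is again in $\OPv$, which gives the claim.

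For the equivalence $S \in \OPv \iff \la(S)$ satisfies Eq. \eqref{la ec posta}, the key observation is that by Definition \ref{defi recons}(5), a projective system $\cV \in \PROv$ is exactly of the form $V_i = v_i U_i$ with $U_i \in L(\cH\coma \cK_i)$ a coisometry, so that $V_i^* V_i = v_i^2 P_{R(V_i^*)}$ with $P_{R(V_i^*)} \in \gr(k_i, d)$. Therefore
\[
\OPv = \Big\{ \sum\subim v_i^2 \, P_i \ : \ \{P_i\}\subim \in \gr(\k\coma d) \Big\},
\]
since conversely any family $\{P_i\}\subim \in \gr(\k\coma d)$ of projections $P_i$ of rank $k_i$ gives rise to coisometries $U_i$ with $U_i^* U_i = P_i$ and hence to a system $\cV = \{v_i U_i^*\}\subim \in \PROv$ with $S_\cV = \sum\subim v_i^2 P_i$; note one must check $S_\cV \in \gld$, i.e. that the sum is invertible, but this is automatic once $\la(S) = \mu$ lies in $(\R_{>0}^d)^\downarrow$ as assumed. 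Then $\la(S) \in \LOPv$ if and only if $\la(S) = \la\big(\sum\subim v_i^2 P_i\big)$ for some $\{P_i\}\subim \in \gr(\k\coma d)$, which by Lemma \ref{EL K} (applied with $\mu = \la(S)$) is equivalent to $\la(S)$ satisfying Eq. \eqref{la ec posta}. Combining this with the saturation argument above closes the chain.

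I expect the main (minor) obstacle to be the bookkeeping in the identification $\OPv = \{\sum\subim v_i^2 P_i : \{P_i\}\subim \in \gr(\k\coma d)\}$ — in particular verifying carefully that from a tuple of projections of the correct ranks one can build a genuine element of $\PROv$ (choosing coisometries $U_i: \cH \to \cK_i$ with $U_i^* U_i = P_i$, which exists precisely because $\rk P_i = k_i = \dim \cK_i$), and that the invertibility constraint $S \in \gld$ is compatible with the hypothesis $\la(S) \in (\R_{>0}^d)^\downarrow$. Everything else reduces to quoting Lemma \ref{EL K} and the elementary unitary-equivalence stability of $\PROv$. The argument is short: no new inequalities need to be derived, since the Horn--Klyachko inequalities are imported wholesale through Lemma \ref{EL K}.
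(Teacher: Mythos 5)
Your proof is correct and follows essentially the same route as the paper: unitary saturation of $\OPv$ via $\cV\cdot U$ gives the first equivalence, and the identification of RS operators of systems in $\PROv$ with invertible sums $\sum_{i\in\IM} v_i^2\,P_i$ over $\gr(\k\coma d)$, combined with Lemma \ref{EL K}, gives the second. The only cosmetic difference is that you construct the coisometries from the projections directly, whereas the paper invokes Lemma \ref{fundamental} and Eq. \eqref{la gama} for that same correspondence.
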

 
\proof  The set $\OPv \inc \gld$ 
is saturated by unitary equivalence. Indeed, if 
$\cV\in \PROv$ and $U \in \cU(d)$, then $\cV\cdot U \igdef \{V_i\,U\}_{i\in \IM}\in \PROv$ 
and $U^*S_\cV U = S_{\cV\cdot U}\in \OPv\,$. 
This shows the first equivalence. 
On the other hand, using Lemma \ref{fundamental} 
and Eq. \eqref{la gama}, we can assure that  
an ordered vector $\mu\in \LOPv$ if and only if 
$\mu_d>0$ and there exists a sequence of projections 
  $\cP=\{P_i\}_{i\in \IM} \in \gr(\k\coma d)$
such that $\mu = \lambda(S_\v(\cP)\, ) =  
\la \left( \ \sum\subim v_i ^2 \, P_i\right) $. 
Hence, the second equivalence follows from Lemma \ref{EL K}. 
\qed

\begin{cor}\label{comp conv}
For every set $(m, \k , d)$ of parameters and every vector  
 $\v\in  \R_{>0}^m\,$ of weights, 
\ben
\item The set  $\LOPv$  is convex. 
\item Its closure $\ov{\LOPv}$ is compact.
\item A vector $\mu \in \ov{\LOPv} \, \setminus \LOPv 
\iff \mu_d = 0$. In other words, 
\beq
\ov{\LOPv} \ \cap \ \R_{>0}^m = \LOPv \ .
\eeq
\een
\end{cor}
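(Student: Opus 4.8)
The plan is to deduce all three claims from Proposition \ref{con suf y nec}, which identifies $\LOPv$ with the solution set of the linear system in Eq. \eqref{la ec posta} intersected with $(\R_{>0}^d)^\downarrow$. The key observation is that Eq. \eqref{la ec posta} consists of one linear equality ($\tr\,\mu = \sum_i v_i^2 k_i$) together with finitely many linear inequalities of the form $\sum_{i\in J_0}\mu_i \le \sum_{i\in\IM} v_i^2|J_i\cap\IN{k_i}|$, one for each $r\in\IN{d-1}$ and each $(m+1)$-tuple in $LR_d^{\,r}(m)$; the right-hand sides are constants. Hence $\LOPv = C \cap (\R_{>0}^d)^\downarrow$ where $C$ is a (closed) convex polyhedron in $\R^d$, and $(\R_{>0}^d)^\downarrow$ is itself convex.

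For item 1, convexity is immediate: $\LOPv$ is the intersection of the convex polyhedron $C$ with the convex set $(\R_{>0}^d)^\downarrow$, so it is convex. (This is the same argument already used in Corollary \ref{La conv}.) For item 2, I would first note that $\ov{\LOPv}\subseteq C \cap (\R_{+}^d)^\downarrow$, since both $C$ and $(\R_{+}^d)^\downarrow$ are closed and contain $\LOPv$. The set $C\cap(\R_{+}^d)^\downarrow$ is closed, and it is bounded: any $\mu$ in it has nonnegative entries summing to $\tau := \sum_i v_i^2 k_i$, so $\|\mu\|_\infty \le \tau$. A closed bounded subset of $\R^d$ is compact, hence $\ov{\LOPv}$ is compact.

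For item 3, the inclusion $\ov{\LOPv}\cap\R_{>0}^d \subseteq \LOPv$ is what needs proof (the reverse inclusion is trivial since $\LOPv\subseteq\R_{>0}^d$ by definition). Suppose $\mu\in\ov{\LOPv}$ with $\mu_d>0$. Then $\mu\in C$ (as $C$ is closed) and $\mu\in(\R_{+}^d)^\downarrow$ with $\mu_d>0$, i.e. $\mu\in(\R_{>0}^d)^\downarrow$; by Proposition \ref{con suf y nec} this means $\mu\in\LOPv$. Conversely, to see that $\ov{\LOPv}\setminus\LOPv$ is nonempty only at the ``boundary'' $\mu_d=0$, it suffices to exhibit a point of $\ov{\LOPv}$ with last entry zero, or simply to observe that $\ov{\LOPv}\setminus\LOPv \subseteq (C\cap(\R_+^d)^\downarrow)\setminus\LOPv \subseteq \{\mu : \mu_d = 0\}$, which follows from the characterization just established. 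This yields $\ov{\LOPv}\cap\R_{>0}^d = \LOPv$.

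The only genuine subtlety — and the step I would treat with the most care — is item 2's claim that $\ov{\LOPv} = C\cap(\R_+^d)^\downarrow$ rather than merely $\subseteq$; for compactness the inclusion $\subseteq$ is all that is needed, so I would not belabor the reverse inclusion, but it does hold because $\LOPv$ is dense in $C\cap(\R_+^d)^\downarrow$: given $\mu$ there with $\mu_d=0$, perturb by adding a small decreasing positive vector $\eps\,w$ (with $w\in(\R_{>0}^d)^\downarrow$ chosen so that $\tr(w)$ matches the required correction, e.g. rescale to stay on the trace hyperplane) and check the strict inequalities in Eq. \eqref{la ec posta} are preserved for small $\eps$ — the inequalities indexed by $J_0\neq\IN{r}$ are strict at interior directions, and the trace equation is maintained by construction. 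Since only item 2 as literally stated (compactness) is required, I would present the bounded-and-closed argument as the proof and mention the density fact only parenthetically.
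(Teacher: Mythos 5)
Your proof is correct and follows essentially the same route as the paper: both identify $\LOPv$ with the intersection of the (compact, convex) polytope cut out by Eq.~\eqref{la ec posta} inside $(\R_{+}^d)^\downarrow$ with the open set $\R_{>0}^d$, and read off all three items from that; you are merely more explicit about why the polytope is bounded. Your parenthetical density claim ($\ov{\LOPv}$ equals the whole polytope) is exactly the content of the remark the paper places after this corollary, where it is justified via Lemma~\ref{EL K} rather than by a perturbation of the inequalities — and, as you note, it is not needed for the statement itself.
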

\begin{proof}
Denote by $\eme $ the set of vectors $\la\in (\R_{+}^d)^\downarrow$ 
which satisfies  Eq. \eqref{la ec posta}. 
It is clear that $\eme $ is compact and convex. 
But Proposition \ref{con suf y nec} assures that 
$\LOPv = \eme \cap \R_{>0}^d \inc \eme\,$. 
This proves items 2 and 3. Item 1 follows 
by the fact that also $\R_{>0}^d $ is convex. 
\end{proof}

\begin{rem}
With the notations of Corollary \ref{comp conv}, 
actually $\ov{\LOPv} = \eme$. This fact is not obvious 
from the inequalities of Eq. \eqref{la ec posta}, but can be deduced 
using  Lemma \ref{EL K}. 
Indeed, it is clear that if $\cP\in \gr(\k\coma d)$ and $S_\v(\cP)\notin \gld$, 
then  $S_\v(\cP)$ can be approximated by matrices 
$S_\v(\cQ)$ for  sequences $\cQ\in \gr(\k\coma d)$ such that
$S_\v(\cQ)>0$. Using Lemma \ref{fundamental} and 
Eq. \eqref{la gama},  this means that these matrices 
 $S_\v(\cQ)\in \OPv\,$. \EOE
\end{rem}

\section{Joint potential of projective RS's}\label{sec joint}
Fix the parameters $(m,\k,d)$. We consider the set of  dual pairs 
associated to $\PROv$: 
$$
\DRSv = \DRSv(m,\k,d) \igdef 
\Big\{ \, (\cV,\,\cW) \in  \PROv \times \RS \ : \ 
\cW\in \cD(\cV) \, \Big\} \ .
$$
We consider on $\DRSv$ the joint potential: Given $(\cV \coma \cW)\in \DRSv$, 
let 
\begin{equation}\label{defi pot conj}
\FP(\cV,\cW) \igdef\FP(\cV)+\FP(\cW)=\tr \, S_\cV^2 +\tr \, S_\cW^2  \in \R_{>0} \ .
\end{equation}
We shall describe the structure of the minimizers of the joint potential both from a spectral and a geometrical point of view. We will denote by 
\begin{equation}\label{el p} 
p_\v = p_\v(m,\k,d)  \igdef \inf_{} \, \{\, \FP(\cV,\cW) \, : \, 
(\cV,\,\cW)\in \DRSv \,\} \ .
\end{equation}

\begin{pro}\label{pro pares otimos}
For every set $(m, \k , d)$ of parameters, the following properties hold: 
\begin{enumerate}
\item The infimum $p_\v $ in Eq. \eqref{el p} is actually a {\bf minimum}. 
\item Let $\tau=\sum\subim v_i^2\,k_i\,$. For every pair $(\cV,\,\cW)\in \DRSv$ we have that 
\beq\label{pv tight}
\FP(\cV,\,\cW )\ge p_\v \geq \frac{\tau^4 +d^4}{d\, \tau^2}  \ , 
\eeq
\item This lower bound is attained if and only if $\cV$ is tight 
($S_\cV  = \frac{\tau }{d} \,I_d$) and $\cW = \frac{d}{\tau}\, \cV = \cV^\#$.
\een
\end{pro}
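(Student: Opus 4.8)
The plan is to establish the three items of Proposition~\ref{pro pares otimos} by first reducing the joint potential to a pure optimization over the spectrum of $S_\cV$, and then applying convexity plus the explicit bound from Corollary~\ref{unico FFP}. First I would observe that for a fixed $\cV\in\PROv$, Corollary~\ref{unico FFP} gives $\FP(\cW)\ge\FP(\cV^\#)=\tr\,S_\cV^{-2}$ for every $\cW\in\cD(\cV)$, with equality iff $\cW=\cV^\#$. Hence
\[
\inf_{\cW\in\cD(\cV)}\FP(\cV,\cW)=\tr\,S_\cV^2+\tr\,S_\cV^{-2},
\]
and the whole problem collapses to minimizing the function $f(S)=\tr\,S^2+\tr\,S^{-2}=\sum_{j=1}^d(\la_j^2+\la_j^{-2})$ over $S\in\OPv$, i.e.\ over $\la=\la(S)\in\LOPv$ subject to $\sum_j\la_j=\tau$. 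Since $f$ depends only on the eigenvalues, I work entirely with the spectral picture $\LOPv\subseteq(\R_{>0}^d)^\downarrow$, which by Corollary~\ref{comp conv} is convex with compact closure $\ov{\LOPv}$, and $\ov{\LOPv}\setminus\LOPv$ consists only of vectors with a zero entry.

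For item~1 (the infimum is attained), I would argue that the continuous function $g(\la)=\sum_j(\la_j^2+\la_j^{-2})$ tends to $+\infty$ as any coordinate $\la_j\to0^+$; therefore on the compact set $\ov{\LOPv}$ the infimum of $g$ is attained at an interior point, i.e.\ at some $\la^*$ with $\la^*_d>0$, hence $\la^*\in\LOPv$ by Corollary~\ref{comp conv}(3). Choosing any $\cV\in\PROv$ with $\la(S_\cV)=\la^*$ (possible by Proposition~\ref{con suf y nec}) and $\cW=\cV^\#$ realizes $p_\v$. For items~2 and~3, I would lower-bound $g$ on the constraint $\sum_j\la_j=\tau$, $\la_j>0$, \emph{ignoring} the Horn--Klyachko inequalities: by convexity of $t\mapsto t^2+t^{-2}$ on $\R_{>0}$ and Jensen's inequality, $\frac1d\sum_j(\la_j^2+\la_j^{-2})\ge(\tau/d)^2+(\tau/d)^{-2}$, so $g(\la)\ge d\big((\tau/d)^2+(d/\tau)^2\big)=\frac{\tau^4+d^4}{d\,\tau^2}$, with equality iff all $\la_j=\tau/d$, i.e.\ iff $\la=(\tau/d)\uno_d$, equivalently $S_\cV=\frac{\tau}{d}I_d$ (tight). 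In that case $\cV^\#=\cV\cdot S_\cV^{-1}=\frac{d}{\tau}\cV$ and $S_{\cV^\#}=S_\cV^{-1}=\frac{d}{\tau}I_d$, so $\FP(\cV,\cV^\#)=d(\tau/d)^2+d(d/\tau)^2=\frac{\tau^4+d^4}{d\,\tau^2}$, attaining the bound; conversely if the bound is attained then $\la(S_\cV)=(\tau/d)\uno_d$ and, by the equality case in Corollary~\ref{unico FFP}, $\cW=\cV^\#$.

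The one subtlety I anticipate is the interaction with feasibility: the bound $\frac{\tau^4+d^4}{d\,\tau^2}$ is obtained by relaxing to the simplex $\{\la>0:\sum\la_j=\tau\}$, which is \emph{larger} than $\LOPv$, so a priori the true minimum over $\LOPv$ could be strictly larger — the inequality $p_\v\ge\frac{\tau^4+d^4}{d\,\tau^2}$ is always valid, but item~3 asserts equality holds \emph{precisely} when a tight system exists. Thus the clean statement is: the lower bound is attained iff the constant vector $(\tau/d)\uno_d$ lies in $\LOPv$, i.e.\ iff $\PROv$ contains a tight system; when it does not, $p_\v$ is strictly larger and is computed by the constrained spectral optimization of the previous sections (this is exactly the content motivating $\la_\v$ in Section~7). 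So the main work is just the two convexity arguments above — the coercivity of $g$ for item~1 and Jensen for items~2--3 — together with invoking Proposition~\ref{con suf y nec} and Corollary~\ref{unico FFP}; there is no serious obstacle, only the care needed to phrase item~3 as an ``if and only if'' conditional on tightness being achievable.
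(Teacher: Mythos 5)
Your proposal is correct and follows essentially the same route as the paper: reduce to $\cW=\cV^\#$ via Corollary~\ref{unico FFP}, pass to the spectral picture $\LOPv$ and exploit its convexity and the compactness obtained by keeping $\la_d$ bounded away from $0$ for existence of the minimum, then bound $F(\la)=\sum_j \la_j^2+\la_j^{-2}$ on the trace-$\tau$ simplex to get the explicit lower bound and its equality case. The only cosmetic differences are that you use Jensen's inequality where the paper uses Lagrange multipliers for the constrained minimum, and coercivity of $F$ near the boundary where the paper restricts to $\{\la_d\ge\eps\}$; your closing caveat that the bound $\frac{\tau^4+d^4}{d\,\tau^2}$ is attained only when $\PROv$ actually contains a tight system is consistent with the paper's reading of item~3.
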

\proof
Given $(\cV,\,\cW)\in \DRSv$, Corollary \ref{unico FFP} asserts that 
 $\FP(\cV,\,\cV^\#)\le \FP(\cV,\,\cW)$ and also that equality holds only if $\cW=\cV^\#$. 
 Thus 
 \begin{equation}\label{ecua simp}
 p_\v=\inf_{\cV\in \PROv}\FP(\cV,\cV^\#)
 \stackrel{\eqref{SMP}}{=}\inf_{\cV\in \PROv} \ 
 \sum_{i=1}^d\lambda_i(S_\cV)^2+ \lambda_i(S_\cV)^{-2} \ .
 \end{equation}
Consider the {\bf strongly convex} map $F:\R_{>0}^d\rightarrow \R_{>0}$ given by $F(x)=\sum_{i=1}^d x_i^2+x_i^{-2}$, 
for $x\in \R_{>0}^d\,$.  
Observe that $\FP(\cV,\cV^\#) = F(\la(S_\cV)\,)$ for every $\cV \in \PROv$. 
By Corollary \ref{comp conv} we know that 
 $\LOPv$ is  convex subset of $(\R_{>0}^d)^\downarrow\,$, 
 and it becomes also compact under a restriction of the type 
 $\la_d \ge \eps$ (for any $\eps>0$). 
 Since a strongly convex function defined in a compact convex set 
 attains its local (and therefore global) 
 minima at a unique point, it follows that there exists a unique 
 $\la_\v=\la_\v(m,\k,d)\in \LOPv$ such that 
\beq\label{el la top}
F(\,\la_\v\,) =\min_{\lambda\in \Lambda_\v (m,\k,d)} F(\lambda) 
= p_\v\ . 
\eeq 
This proves item 1. 
Moreover, using Lagrange multipliers it is easy to see that  
the restriction of $F$ to the set 
$(\RR_{>0}^d)_\tau:=\{\x \in \RR_{>0}^d \,: \, \tr(\x)=\tau \, \}$ reaches its minimum in $\x=\frac{\tau}{d}\cdot \uno$.
Since $\LOPv\subset (\RR_{>0}^d)_\tau$ we get that
$$ 
\FP(\cV,\,\cV^\#)=F(\lambda(S_\cV)\,)\ge F(\frac{\tau}{d}\cdot \uno )
=\frac{\tau^4 +d^4}{d\, \tau^2} \peso{for every} 
\cV \in \PROv \ ,
$$ 
and this lower bound is attained if and only if $\lambda(S_\cV)=\frac{\tau}{d}\cdot \uno_d\, $. 
Note that in this case  $S_\cV=\frac{\tau}{d} \, I_d\,$, 
and therefore  $\cV^\#=\frac{d}{\tau}\, \cV$. 
\QED

\pausa
 Recall that we use in $\RS$  the metric
$
d_P(\cV,\,\cW)=(\ \suml_{i\in \IM }\|V_i-W_i\|_{_2}^2\,)\rai 
=\|T_\cV^*-T_\cW^*\|_{_2} $ and the pseudometric 
$d_S(\cV,\,\cW)=\|S_\cV-S_\cW\|$ 
for pairs 
 $\cV=\{V_i\}_{i\in \IM}$  and $\cW=\{W_i\}_{i\in \IM}\in \RS$.  

\begin{lem}\label{dp MP}
If a pair $(\cV,\,\cW)\in \DRSv$ 
is local $d_P$-minimizer of the joint potential in $\DRSv$, then $\cW = \cV^\#$. 
\end{lem}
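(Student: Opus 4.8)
If a pair $(\cV,\,\cW)\in \DRSv$ is a local $d_P$-minimizer of the joint potential $\FP$ in $\DRSv$, then $\cW = \cV^\#$.

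The plan is to reduce this to the pointwise optimality statement already proved in Corollary \ref{unico FFP}, which says that among all duals $\cW \in \cD(\cV)$ of a \emph{fixed} $\cV$, the canonical dual $\cV^\#$ is the unique minimizer of $\FP(\cW) = \tr\,S_\cW^2$. The only thing we need to check is that this minimization over $\cD(\cV)$ can be carried out ``inside an arbitrarily small $d_P$-ball'', i.e. that a local minimizer in $\DRSv$ is already a local (hence, by convexity, global) minimizer along the slice $\{\cV\}\times\cD(\cV)$.

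First I would fix the candidate minimizer $(\cV,\cW)$ and hold $\cV$ fixed, varying only the second coordinate within $\cD(\cV)$. Recall from Remark \ref{MP sost} that the map $\SP\ni\cX\mapsto T_\cX^*\in\lkh$ is $\R$-linear and injective, and that under it $\cD(\cV)$ corresponds to the affine (hence convex) set of pseudo-inverses of $T_\cV$, with $\cV^\#$ corresponding to the Moore--Penrose pseudo-inverse. In the parametrization of Eq. \eqref{pepe}, every $\cX\in\cD(\cV)$ has $T_\cX^* = T_{\cV^\#}^* + G$ with $G|_{R(T_\cV)}\equiv 0$; moreover $R(T_{\cV^\#})\inc\ker G$, so exactly as in Eq. \eqref{una mas} one gets $\tr\,S_\cX = \tr\,S_{\cV^\#} + \|G\|_{_2}^2$. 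Thus along the segment $t\mapsto \cX_t$ with $T_{\cX_t}^* = T_\cW^* + t(T_{\cV^\#}^* - T_\cW^*)$ (which stays in $\cD(\cV)$, hence in $\DRSv$, since $\cV$ is unchanged), the function $t\mapsto\FP(\cV,\cX_t) = \tr\,S_\cV^2 + \tr\,S_{\cX_t}^2$ is a quadratic polynomial in $t$ with nonnegative leading coefficient $\|T_\cW^* - T_{\cV^\#}^*\|_{_2}^2$.

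Next I would use the local-minimizer hypothesis. Since $d_P(\cX_t,\cW) = |t|\,\|T_\cW^* - T_{\cV^\#}^*\|_{_2} \to 0$ as $t\to 0$, the point $t=0$ lies in any prescribed $d_P$-neighborhood of $(\cV,\cW)$ once $t$ is small; local minimality at $(\cV,\cW)$ forces $t=0$ to be a local minimum of the quadratic $q(t) = \FP(\cV,\cX_t)$ on a neighborhood of $0$. But a real quadratic with nonnegative leading coefficient that has an interior local minimum at $t=0$ must either be constant or have its vertex at $0$; in the latter case $q$ is strictly increasing in one direction away from $0$, contradicting that $t=1$ (which gives $\cX_1 = \cV^\#$) has $q(1)\le q(0)$ by Corollary \ref{unico FFP} unless $q(1)=q(0)$. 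Either way $q(0)=q(1)$, i.e. $\FP(\cV,\cW)=\FP(\cV,\cV^\#)$, and then the uniqueness clause of Corollary \ref{unico FFP} yields $\cW=\cV^\#$.

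The main (and really the only) obstacle is the bookkeeping that the segment $\cX_t$ genuinely stays inside $\DRSv$ and that its $d_P$-distance to $(\cV,\cW)$ is controlled linearly in $t$ — both are immediate from the affine parametrization in Eq. \eqref{pepe} and the isometry-type identity $d_P(\cV,\cW)=\|T_\cV^*-T_\cW^*\|_{_2}$ from Remark \ref{las dist}, so no serious difficulty arises. One small point worth stating carefully is that the leading coefficient $\|T_\cW^*-T_{\cV^\#}^*\|_{_2}^2$ vanishes precisely when $\cW=\cV^\#$, so the ``constant $q$'' case already gives the conclusion directly; otherwise the vertex argument applies. This makes the proof essentially a two-line corollary of the earlier results, and I would present it as such.
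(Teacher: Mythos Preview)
Your approach is the same as the paper's --- parametrize the segment in $\cD(\cV)$ joining $\cW$ to $\cV^\#$ and show that local minimality at the $\cW$-endpoint forces $\cW=\cV^\#$ --- but there is a computational slip that breaks the argument as written.

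You invoke Eq.~\eqref{una mas}, which says $\tr\, S_\cX = \tr\, S_{\cV^\#} + \|G\|_{_2}^2$, and then immediately claim that $q(t)=\FP(\cV,\cX_t)=\tr\, S_\cV^2 + \tr\, S_{\cX_t}^{\,2}$ is a \emph{quadratic} in $t$ with leading coefficient $\|T_\cW^*-T_{\cV^\#}^*\|_{_2}^2$. This conflates $\tr\, S_{\cX_t}$ with $\tr\, S_{\cX_t}^{\,2}$. Writing $A=T_\cW^*-T_{\cV^\#}^*$ (so $A\,T_{\cV^\#}=0$) and $T_{\cX_t}^*=T_{\cV^\#}^*+(1-t)A$, one gets $S_{\cX_t}=S_{\cV^\#}+(1-t)^2\,AA^*$ and hence
\[
q(t)=\tr\,S_\cV^2+\tr\,S_{\cV^\#}^2+2(1-t)^2\,\tr\big(S_{\cV^\#}\,AA^*\big)+(1-t)^4\,\tr\,(AA^*)^2\,,
\]
which is \emph{quartic} in $t$. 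Your ``vertex of a quadratic'' reasoning therefore does not apply as stated.

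The fix is immediate and is exactly what the paper does: all the extra terms above are nonnegative, and $\tr\,(AA^*)^2>0$ whenever $A\neq 0$. Differentiating gives $q'(0)=-2\,\tr(S_{\cV^\#}AA^*)-4\,\tr(AA^*)^2<0$ if $A\neq 0$, so $q$ is strictly decreasing at $t=0$, contradicting local $d_P$-minimality of $(\cV,\cW)$. Hence $A=0$ and $\cW=\cV^\#$. (The paper parametrizes in the reverse direction and checks $K'(1)>0$, which is the same computation.) Your appeal to Corollary~\ref{unico FFP} at the end is then unnecessary: the derivative argument already forces $A=0$ directly.
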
 
\proof
We have shown in Eq. \eqref{pepe}
that, since $\cW\in \cD(\cV)$, then $T_\cW^* = T_{\cV^\#}^* + A$, for some $A\in \lkh$ such that
$ A\,  T_\cV =A\, T_{\cV^\#} = 0\in \op$. 
Recall from Remark \ref{MP sost} that the set $\cD(\cV)$ is convex. 
Then the line segment
$\cW_t = t \cW  + (1-t)\cV^\#\in \cD(\cV)$ satisfies that 
$T_{\cW_t}^* = T_{\cV^\#}^* + t A$ for every $t\in [0,1]$. 
Then, as in Eq. \eqref{una mas},   $S_{\cW_t} = 
S_{\cV^\#} + t^2 \, AA^*$ and 
$$
K(t) \igdef \FP (\cV\coma \cW_t ) = \FP (\cV\coma \cV^\# )+ t^4   \tr \, (AA^*)^2 + 
2 \, t^2 \tr \, T_{\cV^\#}AA^*T_{\cV^\#}^* \ ,
$$
for every $t\in [0,1]$. Observe that $K(1) = \FP (\cV\coma \cW ) $. 
 But taking one derivative of $K$, one gets that
if $A\neq0$ then $K$ is strictly increasing near $t=1$, which contradicts the 
local $d_P$-minimality for $(\cV\coma \cW)$. Therefore 
$T_{\cW_t}^* = T_{\cV^\#}^*$ and $\cW = \cV^\#$.
\QED

\begin{teo}\label{teo pares otimos} 
For every set $(m, \k , d)$ of parameters there exists  $\la_\v
= \la_\v(m,\k,d)\in (\R_{>0}^d)^\downarrow$ 
such that the following conditions are equivalent for pair $(\cV,\,\cW)\in \DRSv$: 
\ben
\item  $(\cV,\,\cW)$ is local $d_S$-minimizer of the joint potential in $\DRSv$. 
\item  $(\cV,\,\cW)$ is global minimizer of the joint potential in $\DRSv$. 
\item It holds that 
$\lambda(S_{\cV})=\la_\v$ and $\cW=\cV^\#$.
\een
\end{teo}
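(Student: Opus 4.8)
The plan is to prove the chain of implications $(2)\Rightarrow(1)$, then $(1)\Rightarrow(3)$, then $(3)\Rightarrow(2)$, with $\la_\v$ being precisely the vector produced in Proposition \ref{pro pares otimos} as the unique minimizer of the strongly convex function $F(x)=\sum_{i=1}^d x_i^2+x_i^{-2}$ over the compact convex set $\ov{\LOPv}\cap\{\la_d\ge\eps\}$. The implication $(2)\Rightarrow(1)$ is immediate from Remark \ref{las dist}: a global minimizer is in particular a local $d_S$-minimizer, since $d_S$-convergence implies we stay within any $d_S$-neighborhood.

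For $(1)\Rightarrow(3)$: suppose $(\cV,\cW)$ is a local $d_S$-minimizer. First, since a local $d_S$-minimizer is automatically a local $d_P$-minimizer (again Remark \ref{las dist}), Lemma \ref{dp MP} forces $\cW=\cV^\#$, so that $\FP(\cV,\cW)=F(\la(S_\cV))$. It remains to show $\la(S_\cV)=\la_\v$. Here is where I expect the main obstacle: I need to upgrade local $d_S$-minimality of $\cV$ (within $\PROv$) to the statement that $\la(S_\cV)$ is a local minimizer of $F$ over $\LOPv$, and then use strong convexity of $F$ on the convex set $\LOPv$ to conclude that the local minimizer is unique and equals $\la_\v$. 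The passage ``$\cV$ is a local $d_S$-minimizer $\Rightarrow$ $\la(S_\cV)$ is a local minimizer of $F$ over $\LOPv$'' requires that every $S'\in\OPv$ near $S_\cV$ in operator norm be realized as $S_{\cV'}$ for some $\cV'\in\PROv$ with $d_S(\cV,\cV')$ small; but this is exactly what an irreducibility hypothesis would give via Theorem \ref{lema secciones}. The subtle point the proof must handle is that $\cV$ is \emph{not} assumed irreducible. The way around it: if $\cV$ is reducible, decompose $\cH$ according to the nontrivial projection $Q\in C_\cV=\{V_i^*V_i\}'$ and argue that $\cV$ splits as an orthogonal sum of projective RS's on the reducing subspaces; one can then perturb each irreducible block (or perturb within the reducible directions while staying in $\PROv$, which changes $S_\cV$ by conjugation and cannot increase $\tr S_\cV^2$) to reach a nearby $\cV'$ with $\la(S_{\cV'})$ closer to $\la_\v$ along the segment in $\LOPv$ — using that $\LOPv$ is convex (Corollary \ref{comp conv}) and $F$ is strongly convex, any $\mu$ on the segment from $\la(S_\cV)$ to $\la_\v$ has $F(\mu)<F(\la(S_\cV))$ unless $\la(S_\cV)=\la_\v$. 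The realizability of such nearby $\mu$ as an RS operator $d_S$-close to $S_\cV$ is the technical crux; I would establish it by combining Lemma \ref{fundamental}, Lemma \ref{citanosa} and the diffeomorphism $\gamma$ of \eqref{la gama} applied blockwise on an irreducible refinement, noting that the $d_S$ distance only sees $S_\cV$ and is controlled by the cross sections.

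For $(3)\Rightarrow(2)$: assume $\la(S_\cV)=\la_\v$ and $\cW=\cV^\#$. Then for any $(\cV',\cW')\in\DRSv$ we have by Corollary \ref{unico FFP} that $\FP(\cV',\cW')\ge\FP(\cV',\cV'^\#)=F(\la(S_{\cV'}))\ge F(\la_\v)=p_\v=\FP(\cV,\cV^\#)=\FP(\cV,\cW)$, where the middle inequality is the defining property of $\la_\v$ as the minimizer of $F$ over $\LOPv\ni\la(S_{\cV'})$ (Proposition \ref{pro pares otimos}, Eq. \eqref{el la top}). Hence $(\cV,\cW)$ is a global minimizer.

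Putting the three implications together gives the theorem. The one genuinely delicate step, as noted, is $(1)\Rightarrow\la(S_\cV)=\la_\v$ in the reducible case; the cleanest route is probably: reduce to irreducible blocks, apply Theorem \ref{lema secciones} on each block to get local $d_S$-cross sections, observe that along these sections one can move $\la(S_\cV)$ toward $\la_\v$ inside the convex set $\LOPv$ while keeping $\cW=\cV^\#$ (so the joint potential equals $F$ of a point moving strictly downhill by strong convexity of $F$), contradicting local $d_S$-minimality unless already $\la(S_\cV)=\la_\v$. I would be careful to phrase the perturbation so that it genuinely decreases $d_S$-locally and not merely $d_P$-locally, which is why Theorem \ref{lema secciones} (rather than just Lemma \ref{dp MP}) is the right tool.
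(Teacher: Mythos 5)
Your implications $(2)\Rightarrow(1)$ and $(3)\Rightarrow(2)$ are correct and essentially what the paper does, as is the reduction $\cW=\cV^\#$ via Lemma \ref{dp MP}. The gap is precisely in the step you flag as the crux, passing from local $d_S$-minimality to $\la(S_\cV)=\la_\v$: the route you sketch through an irreducible decomposition and the cross sections of Theorem \ref{lema secciones} is neither carried out nor the right tool. The segment in $\LOPv$ from $\la(S_\cV)$ to $\la_\v$ has no reason to respect the block decomposition of $\cV$ into irreducible summands, so perturbing blockwise does not let you move $\la(S_\cV)$ along that segment; and the remark that a perturbation ``cannot increase $\tr S_\cV^2$'' is beside the point, since the potential is $\tr S_\cV^2+\tr S_\cV^{-2}$. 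Irreducibility and Theorem \ref{lema secciones} are what one needs for \emph{$d_P$}-local minimizers (that is exactly Lemma \ref{teo pares otimos tight}); they are not needed here.

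What you are missing is that $d_S$ is only a pseudo-metric that sees nothing but the RS operator: $d_S(\cV,\cV')=\|S_\cV-S_{\cV'}\|$. Hence the realizability problem you pose --- find $\cV'\in\PROv$ with $S_{\cV'}=S'$ and $d_S(\cV,\cV')$ small --- is solved by \emph{any} $\cV'$ with $S_{\cV'}=S'$, since then $d_S(\cV,\cV')=\|S_\cV-S'\|$ is automatically small. This is how the paper closes the argument: write $S_\cV=U^*D_\la\, U$ with $\la=\la(S_\cV)$, set $h(t)=t\,\la_\v+(1-t)\,\la$, which lies in $\LOPv$ by convexity (Corollary \ref{comp conv}), so that $S_t=U^*D_{h(t)}U\in\OPv$ by Proposition \ref{con suf y nec}; then choose any (not necessarily $d_P$-continuous) $\cV_t\in\PROv$ with $S_{\cV_t}=S_t$ and $\cV_0=\cV$. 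The curve $t\mapsto(\cV_t\coma\cV_t^\#)$ is automatically $d_S$-continuous, and $G(t)=\FP(\cV_t\coma\cV_t^\#)=F(h(t))$ is convex on $[0,1]$, has a local minimum at $t=0$ by hypothesis, and satisfies $G(1)=p_\v\le G(0)$; strict convexity of $F$ along a nonconstant segment forces $h$ to be constant, i.e.\ $\la(S_\cV)=\la_\v$. Without this observation your proof of $(1)\Rightarrow(3)$ does not close.
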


\begin{proof}
Take the vector $\la_\v$ defined in Eq. \eqref{el la top}. 
In the proof of Proposition \ref{pro pares otimos} we have already seen that 
a pair $(\cV,\,\cW)\in \DRSv$ is a global minimizer for $\FP \iff 
\cW= \cV^\#$ and $\la (S_\cV) = \la_\v\,$. This means that 
$2 \iff 3$.  

\pausa
Suppose now that  $(\cV,\,\cW)\in \DRSv$ is a local $d_S$-minimizer. By Remark \ref{las dist} 
we know that it is also a local $d_P$-minimizer and by Lemma \ref{dp MP} we have that 
$\cW=\cV^\#$. 
In this case, 
denote $\lambda=\lambda(S_\cV)$ and take  $U\in \cU(d)$ such that $U^*D_\lambda U=S_\cV\,$. 
Consider  the segment line 
 
$$
h(t)=t\,\la_\v+(1-t)\,\lambda \peso{for every} t\in [0,1] \ .
$$ 
Then $h(t)\in\LOPv$ 
for every $t\in [0,1]$,  since $\LOPv$ is a convex set 
(Corollary \ref{comp conv}). 
Consider the continuous curve $S_t = U^*D_{h(t)} U$ 
in $\OPv$ and a (not necessarily continuous) curve 
$\cV_t\in \PROv$ such that 
$S_0 = S_\cV\,$,  $\cV_0 = \cV$ and $S_{\cV_t} = S_t\,$ 
for every $t\in [0,1]$. 
Nevertheless, since the curve $S_t$ is continuous, we can assure that 
the map $t\mapsto \cV_t $ is $d_S$-continuous. 

\pausa
Finally, we can consider the map  $G: [0,1] \to \R $ given by 
$$
G(t) = \FP(\cV_t \coma \cV_t ^\#) = \tr \ S_{t }^2 + \tr \, S_{t }^{-2} 
= \sum_{i=1}^d  h_i(t)^2 + h_i(t)^{-2} =  
F(h(t)\,) 
$$ 
for $t\in [0,1]$, where $F$ is the map defined after Eq. \eqref{ecua simp}. 
Observe that $G(0) = \FP (\cV\coma \cV^\#)$ and $G(1) = p_\v\,$, 
by Eq. \eqref{el la top}. 
Then $G$  has local minima at $t=0$ and $t=1$. 
By computing the second derivative of $G$ in terms of the Hessian of $F$, we 
deduce that $G$ must be  constant, because otherwise it would be 
strictly convex. 
From this fact we can see that  the map $h$ is also constant, so that $\la_\v= \la$. 
Therefore $(\cV,\,\cW)= (\cV\coma \cV^\#)$ 
is a global minimizer. 
\end{proof}

\pausa 
Recall that  a system $\RSV\in  \PROv$  
is {\bf irreducible} if 
$C_\cV= \{V_i^*V_i : i\in \IM\}'=\C\,I_d\,$.

\begin{lem}\label{teo pares otimos tight} 
Fix the set $(m, \k , d)$ of parameters and the weights 
$\v=(v_i)_{i\in \IM}\in \R_{>0}^m\,$.  Assume that  
$\cV \in \PROv$ is irreducible. 
Then the following conditions are equivalent: 
\ben
\item The pair $(\cV \coma \cV^\#)$ is local $d_P$-minimizer of the joint potential in $\DRSv$. 
\item The pair  $(\cV \coma \cV^\#)$ is global minimizer of the joint potential in $\DRSv$. 
\item The system $\cV$ is tight, i.e. 
$S_{\cV} = \, \frac{\tau}{d}\, I_d \, $.
\een
Therefore in this case the vector $\la_\v$ of 
Theorem \ref {teo pares otimos} is 
$\la_\v =  \, \frac{\tau}{d}\, \uno_d\,$. 
\end{lem}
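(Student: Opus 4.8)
The plan is to prove the cycle of implications $3 \Rightarrow 2 \Rightarrow 1 \Rightarrow 3$, exploiting that Theorem \ref{teo pares otimos} already establishes the equivalence between being a local $d_S$-minimizer, being a global minimizer, and the condition $\lambda(S_\cV) = \la_\v$, $\cW = \cV^\#$. The implication $2 \Rightarrow 1$ is immediate since every global minimizer is in particular a local $d_P$-minimizer (recall from Remark \ref{las dist} that a local $d_S$-minimizer is always a local $d_P$-minimizer, but here we go the easy direction: global $\Rightarrow$ local for any metric). For $3 \Rightarrow 2$: if $\cV$ is tight, then $S_\cV = \frac{\tau}{d} I_d$, so $\lambda(S_\cV) = \frac{\tau}{d}\uno_d$, and by Proposition \ref{pro pares otimos}(3) the pair $(\cV, \cV^\#)$ attains the absolute lower bound $\frac{\tau^4 + d^4}{d\,\tau^2}$ for $\FP$ on all of $\DRSv$; hence it is a global minimizer (and in particular $\la_\v = \frac{\tau}{d}\uno_d$, since by Theorem \ref{teo pares otimos} all global minimizers share the eigenvalue list $\la_\v$).

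The substantive implication is $1 \Rightarrow 3$, and this is where the irreducibility hypothesis enters through Theorem \ref{lema secciones} and Remark \ref{las dist}. The plan is: assume $(\cV, \cV^\#)$ is a local $d_P$-minimizer of $\FP$ on $\DRSv$. First, since $\cV$ is irreducible, Theorem \ref{lema secciones} gives a smooth local cross section $\rho$ of the map $\RSO : \PROv \to \gld_\tau$ around $S_\cV$ with $\rho(S_\cV) = \cV$. This is precisely the mechanism, explained in Remark \ref{las dist}, by which a local $d_P$-minimizer of a function on $\PROv$ becomes a local $d_S$-minimizer: if some $S'$ near $S_\cV$ in $\gld_\tau$ gave a smaller value of the potential $\FP(\,\cdot\,, (\,\cdot\,)^\#)$ than $\cV$, then $\rho(S')$ would be a point of $\PROv$, $d_P$-close to $\cV$ (by continuity of $\rho$), with strictly smaller joint potential, contradicting the local $d_P$-minimality. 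Hence $(\cV, \cV^\#)$ is a local $d_S$-minimizer, and then Theorem \ref{teo pares otimos} ($1 \Rightarrow 3$) forces $\lambda(S_\cV) = \la_\v$ and, combined with $3 \Rightarrow 2$ above applied in reverse — or more directly, with Proposition \ref{pro pares otimos} — we still need to identify $\la_\v$ with $\frac{\tau}{d}\uno_d$.

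To close the loop and conclude tightness, I would argue as follows once we know $(\cV, \cV^\#)$ is a global minimizer: by Proposition \ref{pro pares otimos}(2), $\FP(\cV, \cV^\#) = p_\v \geq \frac{\tau^4 + d^4}{d\,\tau^2}$, and Proposition \ref{pro pares otimos}(3) says this lower bound is attained precisely when $\cV$ is tight and $\cV^\# = \frac{d}{\tau}\cV$. But a priori $p_\v$ could be strictly larger than $\frac{\tau^4+d^4}{d\tau^2}$ if no tight system exists in $\PROv$; so I cannot conclude tightness from global minimality alone — this is exactly the point where irreducibility must do more work. The resolution is that irreducibility guarantees, via Theorem \ref{lema secciones}, that $S_\cV$ is an interior point of $\OPv$ in the sense that $\RSO$ is locally surjective onto a neighbourhood in $\gld_\tau$; equivalently, $\lambda(S_\cV)$ is an interior point of $\LOPv$ relative to $(\R_{>0}^d)^\downarrow \cap (\R_{>0}^d)_\tau$. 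Since $F(x) = \sum x_i^2 + x_i^{-2}$ restricted to the affine slice $\{\tr x = \tau\}$ has its unique unconstrained-on-the-slice minimum at $\frac{\tau}{d}\uno_d$, and $\lambda(S_\cV)$ is a local minimizer of $F$ over $\LOPv$ at an interior point of that slice, the first-order condition forces $\lambda(S_\cV) = \frac{\tau}{d}\uno_d$, i.e. $S_\cV = \frac{\tau}{d}I_d$ and $\cV$ is tight. This also yields $\la_\v = \frac{\tau}{d}\uno_d$ as the final assertion.

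The main obstacle, as indicated, is the last step: a local $d_P$-minimizer need not a priori ``see'' directions in $\OPv$ that are not realized by nearby elements of $\PROv$, so without irreducibility one cannot upgrade the minimality to a statement about $\lambda(S_\cV)$ being a local minimizer of $F$ over $\LOPv$. The whole argument hinges on Theorem \ref{lema secciones} providing the smooth cross section that transports $d_S$-neighbourhoods back into $d_P$-neighbourhoods; once that is in hand, the convexity and strict convexity facts about $F$ and $\LOPv$ collected in Section 5 and in Proposition \ref{pro pares otimos} do the rest routinely.
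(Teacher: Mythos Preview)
Your argument is correct and rests on the same core mechanism as the paper's: the smooth local cross section from Theorem~\ref{lema secciones} transports perturbations in $\gld_\tau$ back to $d_P$-perturbations in $\PROv$, so local $d_P$-minimality of $(\cV,\cV^\#)$ forces $S_\cV$ to locally minimize $S\mapsto \tr S^2+\tr S^{-2}$ on $\gld_\tau$, which pins $S_\cV$ at $\frac{\tau}{d}I_d$.

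The difference is purely in execution. The paper proves $1\Rightarrow 3$ directly, without invoking Theorem~\ref{teo pares otimos}: assuming $S_\cV$ is not a multiple of the identity, it picks two distinct eigenvalues $\alpha<\beta$, checks that
\[
g(t)=(\alpha+t)^2+(\alpha+t)^{-2}+(\beta-t)^2+(\beta-t)^{-2}
\]
has $g'(0)=2(\alpha-\beta)-2(\beta^{-1}-\alpha^{-1})<0$, builds the corresponding curve $M(t)$ in $\gld_\tau$ along this direction, lifts it through the cross section to a $d_P$-continuous curve in $\PROv$ with strictly decreasing joint potential, and obtains a contradiction. Your route instead first passes through Theorem~\ref{teo pares otimos} to upgrade to a $d_S$-local (hence global) minimizer with $\lambda(S_\cV)=\la_\v$, then you correctly observe that this alone does not yield tightness, and finally fall back on the interior-point/strict-convexity argument --- which is precisely the abstract form of the paper's explicit two-eigenvalue perturbation and already suffices by itself. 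The detour through Theorem~\ref{teo pares otimos} is therefore superfluous: once you have the cross section and know that $F$ restricted to the trace slice has its unique critical point at $\frac{\tau}{d}\uno_d$, you can conclude $1\Rightarrow 3$ immediately, as the paper does.
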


\proof 
Since $C_\cV = \C\, I_d\,$, we can apply 
Theorem \ref{lema secciones}. 
Then the map $\RSO: \PROv \to \gld_\tau$  defined in Eq. \eqref{map RSO} 
has a smooth local cross section  
around $S_\cV$ which sends $S_\cV$ to $\cV$.
Assume that there exists no $\sigma\in \R_{>0}$ such that 
$S_\cV = \sigma\,I_d\,$. In this case there 
exist $\alpha,\,\beta\in \sigma(S_\cV)$ such that $\beta>\alpha>0$.
Consider the map $g:[0,\frac{\beta-\alpha}{2}]\rightarrow \R_{>0}$ given by 
$$
 g(t)
=(\alpha+t)^2+(\alpha+t)^{-2} + (\beta-t)^2+(\beta-t)^{-2}\ .
$$ 
Then  $g'(0)=2(\alpha-\beta)-2(\frac{1}{\beta}-\frac{1}{\alpha})<0$,   
which shows that we can construct 
a continuous curve 
$M:[\,0 \coma \eps \,]\to \matinvd^+_\tau $ 
such that 
$M(0) = S_\cV \,  $ and 
$$
\barr{rl}
 \tr \,M(t)^2 +\tr\, M(t)^{-2} & 
 < \ \tr \, S_\cV ^2 + \tr \, S_\cV ^{-2} = \FP(\cV \coma \cV^\#)
 \peso{for every}  
 t\in(0 \coma  \eps\, ] \ .
 \earr
$$ 
Hence, using the continuous local cross section mentioned before, we can construct 
a $d_P$-continuous curve $\eme : [0\coma \delta \,] \to \PROv$ 
such that $\RSO \circ \eme = M$, $\eme(0)= \cV$ and  
$$
\barr{rl} 
\FP (\eme(t) \coma \eme(t)^\#) = \tr \, M(t) ^2+\tr M(t) ^{-2} &  <  \ 
\FP(\cV \coma \cV^\#)
\peso{for}  t\in( 0\coma \delta \,]  \ . 
\earr
$$
This shows that $(\cV \coma \cV^\#)$ is not a 
local $d_P$-minimizer of the joint potential in $\DRSv$. 
We have proved that $1\implies 3$. Note that
$3\implies 2$ follows from \eqref{pv tight} and $2\implies 1$ is trivial. 
\QED

\begin{rem} \label{redu}
It is easy to see that, if the parameters $(m\coma \k\coma d)$ allow the existence 
of at least one irreducible projective RS, then the set of irreducible systems 
becomes open and dense in $\PROv(m\coma \k\coma d)$. Nevertheless, it is not 
usual that the minimizers are irreducible, even if they are tight 
(see Remark \ref {cor resumen} and Examples \ref{E1} and \ref{E2}).  

\pausa
On the other hand, if the system $\cV\in \PROv$ 
is reducible, there exists a system 
$\cQ= \{Q_j\}_{j\in \IN{p}}$ of minimal projections of 
the unital $C^*$-algebra $C_\cV\,$ (with $p>1$). This means that 
\bit 
\item Each $Q_j \in C_\cV\,$, and $Q_j^2 = Q_j^* = Q_j\,$. 
\item $\cQ$ is a system of projections: $Q_j \, Q_k = 0 $ if $j\neq k$  
and  $\sum_{j\in \IN{p}} Q_j = I_\cH\,$. 
\item Minimality: The algebra $C_\cV\,$ has no proper sub projection of any $Q_j\,$. 
\eit 
By compressing the system $\cV$ to each subspace $\cH_j = R(Q_j)$
in the obvious way, it can be shown that every $\cV\in \PROv$ 
is an ``orthogonal sum" of irreducible subsystems. 
\end{rem}
\pausa
Another system of projections associated with $\cV$ are the 
spectral projections of $S_\cV\,$: If  
$\sigma (S_\cV) = \{\sigma_1\, , \dots, \,\sigma _r\}$, 
we denote these projections by 
$$
P_{\sigma_j} = 
P_{\sigma_j}(S_\cV) \igdef P_{\ker\, (S-\sigma_j \, I_d)}\in \matpos \ ,  \peso{for} j \in \IN{r} \ .
$$ 
Recall that $S_\cV \,  P_{\sigma_j} = \sigma _j \, P_{\sigma_j}\,$ and 
$\sum_{j=1}^r  \, P_{\sigma_j} = I_d\,$, 
so that $S_\cV = \sum_{j=1}^r \sigma _j \, P_{\sigma_j}\, $. 
\EOE

\begin{teo}\label{sum irr}
Fix $\v=(v_i)_{i\in \IM} \in \R_{>0}^m\,$. 
Let $(\cV,\,\cW)\in \DRSv$ be a $d_P$-local minimizer of the joint potential in $\DRSv$ with $\cV=\{V_i\}_{i\in \IM}\,$. Then 
\ben
\item The RS operator $S_\cV\in C_\cV\,= \{V_i^*V_i:\ i\in \IM\}'$. 
\item 
If 
$\sigma(S_\cV)=\{\sigma_1,\ldots,\sigma_r\}$, then  also 
$
P_{\sigma_i} = P_{\sigma_i}(S_\cV)\in C_{\cV} $ 
for every  $i\in \IN{r} \,$.
\een
\end{teo}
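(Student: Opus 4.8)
The plan is to reduce both items to a single variational observation: a $d_P$-local minimizer $(\cV,\cW)$ of $\FP$ cannot be perturbed through a nearby system $\cV_U = \{V_i \, U\}_{i\in\IM}$, with $U$ close to $I_d$, without increasing the potential. First I would invoke Lemma~\ref{dp MP} to replace $\cW$ by $\cV^\#$, so that the potential along such a perturbation is $\FP(\cV_U,\cV_U^\#) = \tr\, S_{\cV_U}^2 + \tr\, S_{\cV_U}^{-2}$, and note that $S_{\cV_U} = U^* S_\cV\, U$ for every $U\in\glh$ (since $\sum_i U^*V_i^*V_i\, U = U^*S_\cV\, U$). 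The key point is that $\{V_i\, U\}_{i\in\IM}$ lies in $\PROv$ for $U$ unitary, but more is true: if $C_\cV' $ contains a one-parameter group that does \emph{not} commute with $S_\cV$, then moving along it changes $\la(S_\cV)$ while staying inside $\PROv$, and one can decrease $\FP$. So the strategy is to show that $S_\cV$ must commute with the von Neumann algebra generated by the $V_i^*V_i$, i.e. $S_\cV\in C_\cV$ — but that is backwards; rather, the correct reading is that $S_\cV$ commutes with everything in $C_\cV'' = \{V_i^*V_i : i\in\IM\}''$, equivalently $S_\cV\in (C_\cV'')' = C_\cV\,$ (using von Neumann's double commutant and that $C_\cV$ is a unital $*$-subalgebra of $\mat$). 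Item~2 then follows immediately, since the spectral projections $P_{\sigma_i}(S_\cV)$ are norm-limits of polynomials in $S_\cV$, hence also lie in the (closed) algebra $C_\cV$.

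Concretely, for item~1 I would argue by contradiction: suppose $S_\cV\notin C_\cV$, so there is some $j_0$ with $[S_\cV, V_{j_0}^*V_{j_0}]\neq 0$. Equivalently, writing $P_{\sigma_1},\dots,P_{\sigma_r}$ for the spectral projections of $S_\cV$, at least one $V_{j_0}^*V_{j_0}$ fails to commute with some $P_{\sigma_k}$. Pick a skew-Hermitian $X\in\mat$ that does not commute with $S_\cV$ but \emph{does} lie in $C_\cV'$ — this is where the obstruction sits: a priori there is no reason for such an $X$ to exist unless $C_\cV'$ is large, and when $C_\cV = \C\, I_d$ (the irreducible case) we have $C_\cV' = \mat$, so any $X$ works, whereas when $C_\cV$ is large, $C_\cV'$ is small and the only candidates commute with $S_\cV$ if $S_\cV\in C_\cV''=C_\cV$... which is exactly the desired conclusion. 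So the right dichotomy is: either $S_\cV\in C_\cV$ (done), or $C_\cV'$ strictly contains the algebra generated by $S_\cV$, in which case we can choose skew-Hermitian $X\in C_\cV'$ with $[X,S_\cV]\neq 0$. Then $U_t = e^{tX}\in\cU(d)$ gives $\cV_t = \cV\cdot U_t\in\PROv$ and $S_{\cV_t} = U_t^* S_\cV\, U_t = e^{-tX}S_\cV\, e^{tX}$, a nontrivial unitary orbit curve through $S_\cV$. Because $F(\la) = \sum_i \la_i^2 + \la_i^{-2}$ is strictly Schur-convex and the eigenvalues of $S_{\cV_t}$ are constant along a unitary orbit, this particular curve does \emph{not} decrease $\FP$ — so I must instead combine it with the radial/trace-preserving directions inside $\OPv$ furnished by Corollary~\ref{comp conv} and the smooth section machinery, exactly as in the proof of Lemma~\ref{teo pares otimos tight}: perturb $\la(S_\cV)$ within the convex set $\LOPv$ toward $\frac{\tau}{d}\uno_d$ along a direction tied to the non-commuting part, strictly lowering $F$, and lift this to a $d_P$-continuous curve in $\PROv$ via the cross section, contradicting local $d_P$-minimality unless $[S_\cV,V_i^*V_i] = 0$ for all $i$.

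The main obstacle I anticipate is precisely making the lifting rigorous \emph{without} assuming irreducibility: Theorem~\ref{lema secciones} only gives smooth local cross sections of $\RSO:\PROv\to\gld_\tau$ at irreducible points, so for a reducible minimizer $\cV$ one cannot directly lift an arbitrary curve in $\gld_\tau$ to $\PROv$. The fix is to use the decomposition of Remark~\ref{redu}: write $\cV$ as an orthogonal sum of irreducible subsystems $\cV^{(j)}$ relative to the minimal central projections $Q_j$ of $C_\cV$, observe that $\FP(\cV,\cV^\#) = \sum_j \FP(\cV^{(j)}, (\cV^{(j)})^\#)$ up to the reconstruction-dimension bookkeeping, apply Theorem~\ref{lema secciones} to each irreducible block to get cross sections block-by-block, and conclude that each block's RS operator commutes with that block's $V_i^*V_i$'s — and since the $Q_j$ are central in $C_\cV$, assembling the blocks shows $S_\cV = \bigoplus_j S_{\cV^{(j)}}$ commutes with every $V_i^*V_i$, i.e. $S_\cV\in C_\cV$. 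With item~1 in hand, item~2 is immediate: $C_\cV$ is a closed unital $*$-subalgebra of $\mat$ containing $S_\cV$, hence it contains $p(S_\cV)$ for every polynomial $p$, and by functional calculus it contains each spectral projection $P_{\sigma_i}(S_\cV)$, which finishes the proof.
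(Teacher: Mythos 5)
The strategy in your final paragraph is essentially the one the paper uses: reduce to $\cW=\cV^\#$ via Lemma \ref{dp MP}, decompose $\cV$ into blocks using projections of $C_\cV$ as in Remark \ref{redu}, check that each compressed pair is still a $d_P$-local minimizer in its smaller parameter space (by additivity of the potential over the blocks and by completing a perturbed block with the untouched orthogonal complement), apply the irreducible case to force each block to be tight, and reassemble. Your derivation of item 2 from item 1 (spectral projections are polynomials in $S_\cV$ and $C_\cV$ is a closed unital subalgebra) is fine, and in fact slightly cleaner than the paper's, which obtains item 2 first by grouping the $Q_j$ according to their tight constants and then item 1 as $S_\cV=\sum_k\sigma_k P_{\sigma_k}$.

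Two concrete problems remain. First, you decompose along the minimal \emph{central} projections of $C_\cV$, whereas the argument needs the minimal projections of $C_\cV$ (which is what Remark \ref{redu} actually supplies). If $Q$ is only minimal in the center and $Q\,C_\cV\, Q\cong \cM_n(\C)\otimes I_m$ with $n>1$, the compressed system on $R(Q)$ has commutant isomorphic to $\cM_n(\C)\otimes I_m\neq \C\, I$, so it is \emph{not} irreducible and neither Theorem \ref{lema secciones} nor Lemma \ref{teo pares otimos tight} applies to that block; the argument stalls exactly where you need it. For a genuinely minimal $Q_j$ one has $Q_j\,C_\cV\, Q_j=\C\, Q_j\,$, and a projection $R\le Q_j$ commuting with every $Q_jV_i^*V_iQ_j$ automatically commutes with every $V_i^*V_i$ (because $Q_jV_i^*V_i(I_d-Q_j)=0$), so minimality forces $R\in\{0,Q_j\}$ and the block is irreducible; the irreducible case then gives $S_\cV Q_j=\alpha_j Q_j\,$, which is what you reassemble. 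Second, the first two-thirds of your writeup (the one-parameter groups $e^{tX}$ and the ``radial directions'') is a dead end that you yourself abandon: unitary conjugation of $\cV$ preserves $\la(S_\cV)$, so it can never detect non-commutativity through the potential. The proof lives entirely in your last paragraph, once ``central'' is removed.
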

\proof
Recall that $\cV\in \PROv \inc \RS$ and hence $0\notin \sigma(S_\cV)$. 
On the other hand, we have already seen in 
Lemma \ref{dp MP} that $\cW$ must be $\cV^\#$. 
Let 
$\cQ=\{Q_j\}_{j\in \IN{p}}$ be a 
system of minimal projections of the unital $C^*$-algebra $C_\cV\,$, 
as in Remark \ref{redu}.

\pausa
Fix $j\in \IN{p}\,$ and denote by $\ese_j = R(Q_j)$.   
For every $i\in \IM$ put 
$\cT_i =V_i(\ese_j) \inc \cK_i\,$, $t_i = \dim \ete_i$  and 
$W_i=V_iQ_j \in L(\H_j \coma \cT_i) \,$. 
Since $Q_j\in C_\cV$ then each matrix $v_i \inv \, W_i^*$ is  an isometry, so that 
the compression of $\cV$ given by 
$\RSW \in \PROv(m\coma \t \coma s_j)$, where $\t = (t_1 \coma \dots \coma t_m)$ and 
$s_j = \dim \ese_j\,$. 
Recall that $S_\cV$ commutes with $Q_j\, $. This implies that 
 $\cW^\# $ is the same type of compression to $\RSv(m\coma \t \coma s_j)$ 
 of the system $\cV^\#$. 

\pausa
A straightforward computation shows that the pair 
$(\cW,\,\cW^\#)\in \DRSv(m\coma \t \coma s_j)$ is still
 a $d_P$-local minimizer of the joint potential 
in $\DRSv(m\coma \t \coma s_j)$. 
Indeed, the key argument is that one can ``complete" other systems in 
$\PROv(m\coma \t \coma s_j)$ near $\cW$ (and acting in $\ese_j$) 
with the fixed orthogonal 
complement $\{V_i(I_d -Q_j)\}\subim \,$, getting systems in $\PROv (m\coma \k\coma d)$ near $\cV$.  
It is easy to see that all the computations involved in the joint potential 
work independently on each orthogonal subsystem.  This shows the minimality of $(\cW,\,\cW^\#)$.

\pausa
Observe that $W_i^*W_i = Q_j V_i^*V_i Q_j = V_i^*V_i Q_j $ \fori . 
Therefore, the minimality of $Q_j$ in $C_\cV$ shows that 
the system $\cW$ satisfies that $C_\cW = \C \, I_{\ese_j}\,$.  
Hence, we can apply Lemma  \ref{teo pares otimos tight}  on $\ese_j\,$, and get 
that $S_\cW = \alpha_j\, I_{\ese_j}\,$ for some $\al_j>0$. But when we return to $\op$, we get that 
$S_\cV\,Q_j = \sum\subim  V_i^*V_i Q_j 
= \sum\subim  W_i^*W_i = S_\cW =\alpha_j\,Q_j\,$. In particular,  $\al_j \in \sigma (S_\cV)$.

\pausa
We have proved  that for every $j\in \IN{p}$ 
there exists $\al_j \in \sigma(S_\cV)$ such that 
$S_\cV\,Q_j=\alpha_j\,Q_j$  and hence each projector  $Q_j\leq P_{\alpha_j}=P_{\alpha_j}(S_\cV) \,$. 
Using  that $\sum_{j\in \IN{p}}Q_j=I_d$ we see that each 
\beq\label{part irr}
P_{\sigma_k} =\sum_{j\in J_k}Q_j\in C_\cV\ ,  \peso{where}
J_k=\{j\in \IN{p}:\ \alpha_j=\sigma_k\} \ . 
\eeq
Therefore also $S_\cV = \sum_{k\in \IN{r}} \sigma _k \, P_{\sigma_k}
\in C_\cV\, $.
\QED

\begin{rem}\label{cor resumen} \rm 
Theorem \ref{sum irr} assures that if $(\cV\coma \cV^\#)$ is a $d_P$-local 
minimizer of the joint potential in $\DRSv\,$, then  $\cV$ is 
an orthogonal sum 
of {\bf tight} systems in the following sense: 

\pausa
If $\sigma(S_\cV)=\{\sigma_1,\ldots,\sigma_r\}$, and we denote 
$\cH_j = R(P_{\sigma_j})= 
\ker\, (S-\sigma_j \, I_d)\,$ for every  $j\in \IN{r}\,$, 
then  $\cH =  \bigoplus_{j\in \IN{r}} \, \cH_j\,$. 
By Theorem \ref{sum irr} each $P_{\sigma_j} \in C_\cV\,$. 
Then, putting $d_j = \dim \cH_j\,$, 
\bce
 $\cK_{i \coma j} =V_i(\cH_j) \inc \cK_i\,$ \coma  $k_{i \coma j} = \dim \cK_{i \coma j}$  
 \ \ and  \ \ $\k^j = (k_{1 \coma j} \coma \dots \coma k_{m \coma j})$  \ ,
\ece
\fori \  and  $j\in \IN{r}\,$, 
we can define  the the  tight compression of $\cV$ to each $\cH_j\, $: 
$$
\cV^j = \{V_i\, P_{\sigma_j}\}_{i\in \IN{m}} \in \PROv (m\coma \k^j \coma d_j )
\peso{for} j \in \IN{r} \ .
$$ 
Indeed, since $P_{\sigma_j} \in C_\cV$ then $\cV^j$ is projective. Also 
$S_{\cV^j} = S_\cV \, P_{\sigma_j} = \sigma_j \, P_{\sigma_j}\,$, 
which means that $\cV^j$ is $\sigma_j$ - tight. 
Observe that the decomposition of each $\cV^j$ into irreducible tight systems 
(as in Remark \ref{redu}) follows from  the orthogonal decomposition  of $\cH_j$ given in  Eq. \eqref {part irr}.

\pausa
In particular, every $\cV\in \PROv$ such that $\la(S_\cV) =\la_\v $ 
(the unique vector of Theorem  \ref{teo pares otimos}) must 
have this structure, because in this case $(\cV\coma \cV^\#)$ is 
a $d_S$ (hence also $d_P$) local minimizer  of the joint potential in $\DRSv\,$. 
Observe that the structure of all 
global minimizers $\cV$ is almost the same: Since 
$\la(S_\cV)= \la_\v\,$, the number $r$ of tight components, 
the sizes $d_j$ and 
the tight constants $\sigma_j$ for each space $\H_j$ coincide 
for every such minimizer $\cV$. 

\pausa
Note that, if such a 
$\cV$ is not tight, then it can not be irreducible. On the other hand,  
its dual $\cV^\# $ can only be projective if 
$V_i \, P_{\sigma_j} = 0$ or $V_i$ for every 
$i \in \IM$ and $j \in \IN{r}\,$.  \EOE
\end{rem}

\section{Examples and conclusions}\label{Exas}

\pausa
The following two examples are about irreducible systems. 
\begin{exa}\label{E1} 
Let $d = k_1 + k_2\, $ and $\k = (k_1\coma k_2)$. Assume that $k_1 >k_2\,$. 
We shall see that, in this case, 
there is no irreducible (Riesz) systems in $\cP\cR\cS (2\coma \k \coma d)$. 
Observe that the situation is the same whatever the weights $(v_1\coma v_2)$ are.

\pausa
Indeed, if  $\cV = (V_1\coma V_2) \in \cP\cR\cS_\uno (2\coma \k \coma d)$, 
let  $\ese_i = R(V_i^*)$ and $P_i = P_{\ese_i}= V_i^*V_i$ for $i = 1,2$.  Then 
$\C^d = \ese_1 \oplus \ese_2\,$ (not necessarily orthogonal). 
Observe that $\dim S_1 =\dim S_2\orto = k_1\,$ and $2\, k_1 >d$. Hence 
$\ete= \ese_1 \cap \ese_2\orto \neq \trivial $. Since $P = P_\ete \le P_1$ 
and $P\le I_d- P_2\,$, then 
$P \in C_\cV$ and  $0 \neq P \neq I_d\,$. Therefore 
 $C_\cV  \neq \C\, I_d\,$.

\pausa
In particular, if the decomposition $\C^d = \ese_1 \oplus \ese_2\,$ 
is orthogonal, then $S_\cV = P_1 + P_2 = I_d\,$. So,  in this case $\cV$ 
is tight and reducible. 
\EOE
\end{exa}

\begin{exa}\label{E2} 
If $m \ge d$ and $\k = \uno_m\,$, then $\PRO(m\coma \k \coma d)$ is the set of 
$m$-vector frames for the space 
$\C^d$.  In this case $\RSF \in \PRO$ is reducible $\iff$ there exists
$J\inc \IM$ such that 
$\vacio \neq J \neq \IM$ and the subspaces $\gen\{f_i : i \in J\}$ and 
$\gen\{f_j : j \notin J\}$ are orthogonal. 

\pausa
Indeed, if $A = A^*$, then $A \in C_\cF\,  \iff\, $ every $f_i$ is an eigenvector of $A$. 
But different eigenvalues of $A$ must have orthogonal subspaces of eigenvectors. Observe that 
in this case the set of irreducible systems is an open and dense subset of $\PROv$, 
since it is the intersection of $2^m-2$ open dense sets (one for each fixed nontrivial 
$J\inc \IM$). 
\EOE
\end{exa}

\begin{num}\label{con mayo} {\bf Minimizers and majorization:} 
Theorem \ref{teo pares otimos} states that there exists a vector 
$\la_\v = \la_\v(m,\k,d)\in (\R_{>0}^d)^\downarrow$  such that a system 
$\cV\in \PROv (m,\k,d)$ satisfies that 
$(\cV,\,\cV^\#)$ is a global minimizer of the joint potential in $\DRSv$
\sii $\lambda(S_{\cV})=\la_\v\,$. 
This vector is found as the unique minimizer of the map 
$F(\la)=\sum_{i=1}^d \la_i^2+\la_i^{-2}$ on the convex set $\Lambda(\OPv)\,$.

\pausa
In all the examples where  $\la_\v$ could be explicitly computed, it 
satisfied a stronger condition, in terms of majorization 
(see \cite [Cap. II]{[Bat]} for definitions and basic properties).   
We shall see that in these examples there is a vector $\la \in \Lambda (\OPv\,)$ such that 
\beq\label{lav may}
\mbox{$\la \prec \la(S_\cV) $  \ for every  \ $\cV\in \PROv\,$  \ (the symbol 
\ $\prec$  \ means majorization) .}
\eeq 
Observe that such a vector $\la \in \Lambda (\OPv\,)$ 
must be the unique  minimizer  for  $F$ on $\Lambda (\OPv\,)$, since the map $F$ is  
permutation invariant and  convex. Hence $\la =\la_\v \,$. 
Moreover, those cases where $\la_\v $ satisfies Eq. \eqref {lav may}
have some interesting properties regarding the structure 
of minimizers of the joint potential. For example, 
that $\la_{\,t\v}(m,\k,d)=\,t^2 \la_{\v}(m,\k,d)$ for $t>0$, 
a fact that is not evident at all from the properties of these vectors.
\EOE
\end{num}
\begin{conj} \label{la conj}
For every set of parameters $(m,\k,d)$ and $\v\in \R_{>0}^d\,$, 
the vector  $\la_\v(m,\k,d)$ of Theorem \ref{teo pares otimos}  satisfies the
 majorization minimality of Eq. \eqref {lav may} 
on $\Lambda(\OPv)\,$. \EOE
\end{conj}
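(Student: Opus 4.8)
The plan is to reduce Conjecture~\ref{la conj} to a combinatorial statement about the Klyachko polytope and to locate the obstruction there. By Corollary~\ref{comp conv} and the Remark following it, $\ov{\LOPv}=\eme$, the compact convex polytope in $(\R_{+}^d)^\downarrow$ defined by $\tr\,\mu=\tau$ together with the inequalities in Eq.~\eqref{la ec posta}. The function $F(\la)=\sum_i\la_i^2+\la_i^{-2}$ is permutation invariant and strictly convex, hence strictly Schur--convex; so if $\eme$ admits a minimum $\la$ for the majorization order, then $F(\la)<F(\mu)$ for every $\mu\in\eme$ that is not a permutation of $\la$, and since distinct elements of $(\R_{+}^d)^\downarrow$ are never permutations of one another, $\la$ is the unique minimizer of $F$ on $\LOPv$, whence $\la=\la_\v$ (the vector of Eq.~\eqref{el la top}). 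Such a $\la$ indeed lies in $\LOPv$: choosing any $\nu\in\LOPv\inc\R_{>0}^d$, from $\la\prec\nu$ one gets $\la_d\ge\nu_d>0$, and then Corollary~\ref{comp conv}(3) applies. Thus the conjecture is equivalent to the assertion that \emph{the polytope $\eme$ has a minimum element with respect to majorization.}

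Next I would pin down what that minimum must be. For ordered trace--$\tau$ vectors, $\la\prec\mu$ holds iff $t_r(\la)\ge t_r(\mu)$ for all $r\in\IN{d-1}$, where $t_r(x)=\sum_{i=r+1}^d x_i$ is the sum of the $d-r$ smallest coordinates; on $(\R_{+}^d)^\downarrow$ each $t_r$ is linear. Hence $\eme$ has a $\prec$--minimum iff the functionals $t_1,\ldots,t_{d-1}$ are \emph{simultaneously} maximized at one point of $\eme$. Since $t_r(\mu)$ is the smallest of the sums $\sum_{i\in K}\mu_i$ over $|K|=d-r$, the only inequality of Eq.~\eqref{la ec posta} that directly bounds $t_r$ from above is the one whose first block is the final segment $\{r+1,\ldots,d\}$; writing
\[
b_r\igdef\min\Big\{\ \sum_{i\in\IM} v_i^2\,|\,J_i\cap\IN{k_i}\,|\ :\ (\{r+1,\ldots,d\},J_1,\ldots,J_m)\in LR_d^{\,d-r}(m)\ \Big\},\qquad b_0\igdef\tau,\ \ b_d\igdef 0,
\]
every $\mu\in\eme$ obeys $t_r(\mu)\le b_r$. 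Therefore the vector $\la$ with $\la_r=b_{r-1}-b_r$ satisfies $t_r(\la)=b_r\ge t_r(\mu)$ for all $\mu\in\eme$, i.e. $\la\prec\mu$ for all $\mu\in\eme$, and it is the sought minimum as soon as $\la\in\eme$. So the conjecture would follow from two purely Littlewood--Richardson facts, with no reference to reconstruction systems: (A) the sequence $(b_{r-1}-b_r)_{r\in\IN{d}}$ is non--increasing, so $\la\in(\R_{+}^d)^\downarrow$; and (B) this $\la$ satisfies \emph{all} of the inequalities in Eq.~\eqref{la ec posta}, in particular those indexed by tuples whose first block is not a final segment, so that $\la\in\eme$.

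Once Eq.~\eqref{lav may} is known, the homogeneity $\la_{t\v}(m,\k,d)=t^2\,\la_\v(m,\k,d)$ mentioned in~\ref{con mayo} drops out at once: both $\tau$ and the right--hand sides in Eq.~\eqref{la ec posta} are quadratic in $\v$, so $\v\mapsto t\v$ rescales $\eme$ to $t^2\eme$ and hence its $\prec$--minimum to $t^2$ times itself.

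The main obstacle is precisely (A) and (B). A generic compact convex polytope of ordered trace--$\tau$ vectors need \emph{not} have a majorization minimum --- the faces maximizing the several $t_r$ may have empty common intersection --- so soft convexity is not enough, and one must exploit the specific facet structure of the Klyachko polytope, especially the way the inequality systems for different sizes of $J_0$ interlock. I expect (B) to be the hard part: showing that $\la$, assembled from the tightest \emph{final--segment} bounds $b_r$, never violates a Horn inequality attached to an intermediate subset. This looks to require either an induction along the Horn/Belkale--Knutson--Tao recursion describing the LR--cone, or a direct combinatorial identification of the $\prec$--extremal vertex of $\eme$. As a first test I would verify (A)--(B) by hand for $d=3$ and small $m$, where Example~\ref{E1} already shows that the geometry is delicate even in low dimensions.
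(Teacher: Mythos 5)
This statement is labeled a \emph{conjecture} in the paper: the authors give no proof, only supporting evidence in Section \ref{Exas} (vector frames via \cite{MR}, Riesz systems, the case $m=2$, and one explicit $d=4$ example), in each case exhibiting the majorization-minimal vector by hand. Your proposal does not close it either. The soft part of your reduction is correct: $F$ is symmetric and strictly convex, hence strictly Schur-convex; a $\prec$-minimum of $\eme=\ov{\LOPv}$, if one exists, has $\la_d\ge\nu_d>0$ for any $\nu\in\LOPv$ (equal traces plus $\la\prec\nu$), hence lies in $\LOPv$ by Corollary \ref{comp conv} and must equal $\la_\v\,$; conversely a $\prec$-minimum of $\LOPv$ is one of its closure because the inequalities defining $\prec$ are closed. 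The homogeneity remark is also fine. But the entire content of Conjecture \ref{la conj} is then concentrated in your claims (A) and (B), which you explicitly leave open. A generic compact convex subset of ordered trace-$\tau$ vectors does \emph{not} possess a majorization minimum --- the linear functionals $t_1,\dots,t_{d-1}$ need not be simultaneously maximized at one point --- and proving that the Klyachko polytope $\eme$ is an exception is exactly the open problem. So there is a genuine gap: the proposal is a (reasonable) reformulation, not a proof.

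There is also a defect in how you set up the candidate. Every admissible tuple with $|J_0|=d-r$ bounds $t_r(\mu)$ from above, since $t_r(\mu)\le\sum_{i\in J_0}\mu_i$ for $\mu$ decreasing; your $b_r$ retains only the final-segment tuples and is therefore merely an upper bound for $M_r\igdef\max_{\mu\in\eme}t_r(\mu)$, possibly a strict one. If a $\prec$-minimum $\la^*$ exists it must satisfy $t_r(\la^*)=M_r$ for every $r\in\IN{d-1}\,$, so whenever $b_r>M_r$ your vector $\la$ has $t_r(\la)=b_r>M_r$ and cannot lie in $\eme\,$; claim (B) would then fail even though the conjecture might be true. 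The robust formulation is that Conjecture \ref{la conj} is equivalent to the $d-1$ linear programs $\max_{\eme}t_r$ admitting a common optimizer, and identifying that point inside the Horn/Littlewood--Richardson facet structure (or running the induction you allude to) is the substance of the problem; your reduction organizes the attack sensibly but leaves that substance untouched, consistent with the paper's decision to state the result only as a conjecture.
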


\begin{exa}\label{ejem para vectores} 
Given $\v=\v^\downarrow\in \R_{>0}^m\,$ and $d\le m$,  
the 
 $d$-irregularity of $\v$ is the 
index 
$$
\barr{rl}
r &= \ r_d(\v) \igdef \max\,  \big\{\, j\in \IN{d-1} \,:\ (d-j)\, v_j^2
\,> \, \sum_{i=j+1}^m v_i^2 \ \big\} \ , 
\earr
$$ 
or $r=0$ if this set is empty. 
In \cite[Prop. 2.3]{MR} (see also \cite[Prop. 4.5]{Illi}) 
it is shown that for any set of parameters 
$(m\coma\uno_m\coma d)$ and every $\v=\v^\downarrow \in \R_{>0}^m\,$,
there is $c\in \R$ such that
$$
\la_\v(m,d) \igdef (v_1^2 \coma \ldots \coma v_r^2 \coma c \, \uno_{d-r} )\in \Lambda(\OPv(m,\uno_m \coma d)\,)
$$ 
and it satisfies  Eq. \eqref{lav may}. Therefore 
$\la_\v(m,d)=\la_\v(m,\uno_m\coma d)$ by  \ref{con mayo}.
Thus, in the case of vector frames,  Conjecture \ref{la conj} is known to be true. 
 \EOE
\end{exa}

\pausa
In the following examples we shall compute explicitly the 
the vector $\la_\v$ and the global minimizers of the joint potential 
in $\PROv\,$. 
Since  we shall use Eq. \eqref {lav may} as our main tool (showing Conjecture 
\ref{la conj} in these cases),  we need a technical 
result about majorization, similar to  
 \cite[Lemma 2.2]{P}. 
Recall that the symbol $\prec_w$ means 
weak majorization.  
\begin{lem}\label{lem may} \rm
Let $\alpha \coma \gamma \in \R^n$, 
$\beta\in \R^m$ and $b\in \RR$ 
such that $b \le \min_{k \in \In} \gamma_k\,$.
Then, if 
$$
\tr\, (\gamma \coma b\,\uno_m) \, \le \, \tr\,(\alpha \coma \beta) 
\peso{and} \gamma \, \prec_w \, \alpha \ 
\implies \ (\gamma \coma b\,\uno_m) \, \prec_w \, (\alpha \coma \beta) \ . 
$$
Observe that we are not assuming that $(\alpha \coma \beta) = 
(\alpha \coma \beta)^\downarrow$. 
\end{lem}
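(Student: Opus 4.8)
The plan is to prove the weak majorization $(\gamma\coma b\,\uno_m)\prec_w(\alpha\coma\beta)$ directly from the definition: for every $k$, the sum of the $k$ largest entries of $(\gamma\coma b\,\uno_m)$ is at most the sum of the $k$ largest entries of $(\alpha\coma\beta)$. I would first reduce to the case where both tuples are sorted in decreasing order (weak majorization is permutation invariant on each side), so write $\gamma=\gamma^\downarrow$, $\alpha=\alpha^\downarrow$, $\beta=\beta^\downarrow$, and note that since $b\le\min_k\gamma_k=\gamma_n$, the decreasing rearrangement of $(\gamma\coma b\,\uno_m)$ is exactly $(\gamma_1\coma\dots\coma\gamma_n\coma b\coma\dots\coma b)$ with the $b$'s at the end. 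Denote by $S_k(x)$ the sum of the $k$ largest entries of a vector $x$; the goal is $S_k\big((\gamma\coma b\uno_m)\big)\le S_k\big((\alpha\coma\beta)\big)$ for all $k\in\IN{n+m}$.

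**First** I would handle $k\le n$: here $S_k\big((\gamma\coma b\uno_m)\big)=S_k(\gamma)=\sum_{i=1}^k\gamma_i$ (since $b\le\gamma_n\le\gamma_i$). Now $S_k(\gamma)\le S_k(\alpha)$ is precisely the hypothesis $\gamma\prec_w\alpha$ for indices $k\le n$. And $S_k(\alpha)\le S_k\big((\alpha\coma\beta)\big)$ trivially, because adjoining more entries can only increase (or keep) the sum of the $k$ largest. That settles $k\le n$.

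**The main obstacle** is the range $n<k\le n+m$, where the $b$-entries start contributing on the left side. Here $S_k\big((\gamma\coma b\uno_m)\big)=\tr\,\gamma+(k-n)\,b = \tr\,(\gamma\coma b\uno_m) - (n+m-k)\,b$. On the right side, since there are $n+m$ entries total, $S_k\big((\alpha\coma\beta)\big)=\tr\,(\alpha\coma\beta)-\big(\text{sum of the }n+m-k\text{ smallest entries of }(\alpha\coma\beta)\big)$. So the desired inequality becomes
$$
\tr\,(\gamma\coma b\uno_m) - (n+m-k)\,b \ \le\ \tr\,(\alpha\coma\beta) - \sigma_{n+m-k}\coma
$$
where $\sigma_j$ denotes the sum of the $j$ smallest entries of $(\alpha\coma\beta)$. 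Using the trace hypothesis $\tr\,(\gamma\coma b\uno_m)\le\tr\,(\alpha\coma\beta)$, it suffices to show $\sigma_j\le j\,b$ for $j=n+m-k\in\{0\coma\dots\coma m-1\}$; equivalently that the average of the $j$ smallest entries of $(\alpha\coma\beta)$ is at most $b$. This need not hold for arbitrary $\alpha,\beta$, so here is where $\gamma\prec_w\alpha$ must be used: I would argue that if the $j$ smallest entries of $(\alpha\coma\beta)$ had total exceeding $j\,b$, then in particular the $n+m-j=k$ largest entries would have total strictly less than $\tr\,(\alpha\coma\beta)-j\,b$; combine with the weak majorization bound $S_n(\gamma)=\tr\,\gamma\le S_n(\alpha)\le S_n\big((\alpha\coma\beta)\big)$ and the fact that $j\le m-1<m$ so the $k\ge n+1$ largest entries include all of a "$\gamma$-dominating" block — more carefully, split $S_k\big((\alpha\coma\beta)\big)\ge S_n(\alpha)+(\text{next }k-n\text{ largest})\ge \tr\,\gamma + (k-n)\cdot(\text{those entries' min})$ and bound that minimum below by $b$ using that fewer than $m$ entries remain below it. This comparison of the "tail" of $(\alpha\coma\beta)$ against the constant $b$, fueled by the trace and weak-majorization hypotheses, is the crux; once $\sigma_{n+m-k}\le (n+m-k)\,b$ is established the inequality closes immediately. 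Finally I would remark, as the statement itself notes, that no sortedness of $(\alpha\coma\beta)$ was assumed since $S_k$ and hence $\prec_w$ are invariant under permuting entries within each tuple.
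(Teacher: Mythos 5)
Your handling of $k\le n$ is fine, but the range $n<k\le n+m$ --- which you correctly identify as the crux --- contains a genuine gap. You reduce the desired inequality to the claim $\sigma_j\le j\,b$ (the $j$ smallest entries of $(\alpha,\beta)$ sum to at most $j\,b$), concede it can fail in general, and then propose to rescue it using $\gamma\prec_w\alpha$. But the claim fails even when \emph{all} hypotheses of the lemma hold: take $n=1$, $m=2$, $\gamma=(0)$, $b=0$, $\alpha=(5)$, $\beta=(3,3)$; then $\tr(\gamma, b\,\uno_2)=0\le 11$ and $\gamma\prec_w\alpha$, yet $\sigma_1=3>0=b$. (The conclusion of the lemma is still true there --- only your sufficient condition is false.) Your fallback, bounding the minimum of the ``next $k-n$ largest'' entries of $(\alpha,\beta)$ below by $b$, fails for the same reason: nothing prevents every entry of $\beta$ from lying strictly below $b$ (e.g.\ $\gamma=(0)$, $b=0$, $\alpha=(20)$, $\beta=(-5,-5)$ satisfies all hypotheses). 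No pointwise comparison between the entries of $(\alpha,\beta)$ and $b$ is available; only the aggregate trace inequality is.

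The missing idea is an interpolation between the two endpoint inequalities you already possess: at $j=0$ the bound $\tr\gamma\le S_n\big((\alpha,\beta)\big)$ follows from $\gamma\prec_w\alpha$, and at $j=m$ the bound $\tr\gamma+mb\le\tr(\alpha,\beta)$ is the trace hypothesis. Since $j\mapsto \tr\gamma+jb$ is affine while $j\mapsto S_{n+j}\big((\alpha,\beta)\big)$ is concave in $j$ (its consecutive increments are the decreasingly ordered entries), validity at the two endpoints forces validity for every intermediate $j$. The paper implements exactly this by inserting the averaged vector $\rho=\frac{\tr\beta}{m}\,\uno_m$ and checking
$\sum_{i\in\IN{k}}(\gamma^\downarrow, b\,\uno_m)_i\le\sum_{i\in\IN{k}}(\alpha^\downarrow,\rho)_i\le\sum_{i\in\IN{k}}(\alpha^\downarrow,\beta^\downarrow)_i$
for all $k$: the first inequality is the affine interpolation just described, and the second holds because the top $j$ entries of $\beta$ average at least $\tr\beta/m$. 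As written, your argument does not close this step.
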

\proof Let $h = \tr\, \beta $ and 
$\rho = \frac hm \, \uno_m\,$. 
Then it is easy to see that 
$$
\barr{rl}
\sum_{i\in \IN{k}} (\gamma^\downarrow \coma b\,\uno_m)_i &\le \ 
\sum_{i\in \IN{k}} (\alpha^\downarrow \coma \rho)_i  \ \le \  
\sum_{i\in \IN{k}}(\alpha^\downarrow \coma \beta^\downarrow) _i 
\peso{for every} k \in \IN{n+m}\ .
\earr
$$ 
Since  $(\gamma^\downarrow \coma b\,\uno_m) = 
(\gamma \coma b\,\uno_m)^\downarrow$, 
 we can conclude that 
  $(\gamma \coma b\,\uno_m)  \prec_w  (\alpha \coma \beta)$. 
 \QED
\begin{exa}\label{E m R} 
Assume that $\tr \k = d$. Then the elements of 
$\PRO_\v(m\coma \k \coma d)$ are {\bf Riesz} systems. 
Assume that the weights are ordered in such a way that $\v = \v^\downarrow$. We shall see that  
the vector 
$\la = (v_1^2\,\uno_{k_1}\coma \dots\coma v_m ^2\,\uno_{k_m})\prec \lambda (S_\cV)$ 
for every $\cV\in \PRO_\v(m\coma \k \coma d)$. Hence $\la $ satisfies 
 Eq. \eqref{lav may}, and $\la_\v = \la$ by  \ref{con mayo}.

\pausa
Indeed, given $\RSV\in \PRO_\v\,$, consider the projections 
$P_i = v_i^{-2}\, V_i^*V_i\,$  
and denote by $\ese_i = R(P_i)$ \fori. 
Then $S_\cV = \sum \subim v_i^2\, P_i\,$ and $\C^d = 
\bigoplus _{i \in \IM} \ese_i\,$ where the direct sum is not 
necessarily orthogonal. 
Let 
\bce 
$\ese = \bigoplus _{i \in \IN{m-1}} \ese_i\inc \C^d\ $ \ ,  \ \ 
$P = P_\ese$  \ \ and  \ \ $Q = I_d -P =  P_{\ese\orto}\,$ .
\ece
Consider the restriction 
 $A = \suml _{i =1}^{m-1} 
 \, v_i^2 \, P_i \in L(\ese)^+$. 
It is well known that the pinching matrix
$$
M = 
P \, S_\cV \, P + Q\, S_\cV \,Q = 
\bm {cc}  A  + v_m^2\, PP_2P &0\\0& v_m^2\, QP_m Q\em 
\barr{ll} \ese\\\ese\orto \earr 
$$
satisfies that $\la(M) \prec \la (S_\cV)$. 
Using an inductive argument on $m$ 
(the case $m=1$ is trivial),  
for the Riesz system $\cV_0= \{V_i\big|_\ese\}_{i \in \IN{m-1}}$ (for $\ese$) 
such that $S_{\cV_0} = A$, we can assure that 
$$
\gamma = (v_1^2\,\uno_{k_1}\coma \dots\coma v_{m-1} ^2\,\uno_{k_{m-1}})\prec \lambda (A)
\prec_w \la \big( \, A  + v_m^2\, PP_2P \,\big) =
\al  \peso{in} \R^{d-k_m} \ .
$$ 
Since $v_m \le v_{m-1}\,$,   Lemma \ref{lem may}  
assures that  
$\la = (\gamma \coma v_m^2\, \uno_{k_m})  
\prec (\al \coma \beta) = \la(M)$, 
where $\beta = \la (v_m^2\, QP_m Q) \in \R^{k_m}$. Hence, we have proved that  $\la  \prec \la(S_\cV)$. 

\pausa
Recall a system $\cV\in \PROv $ is a minimizer 
\sii $ \la (S_\cV) = \la_\v= \la   $. Now, it is easy to see that  
$ \la (S_\cV) = \la_\v   $  \sii  the 
projections $P_i $ are mutually orthogonal. 
\EOE
\end{exa}

\begin{exa} Assume that  the parameters $(m\coma \k \coma d)$ satisfy that 
$$
\mbox{$m =2$  \ \ and  \ \ $\tr \k = k_1 + k_2 >d$ \ ,  \ \ 
but  \ \ $k_1\neq d \neq k_2\ $.} 
$$
Fix $\v= (v_1\coma v_2)$  with $v_1 \ge v_2$. For the space 
$\PROv(2\coma \k \coma d)$ the vector $\la_\v$ of 
Theorem \ref{teo pares otimos} and all the global minimizers 
of the joint potential can be computed: 
Denote by 
$$r_0 = k_1 + k_2 -d \quad , \quad r_1 = k_1 - r_0\peso{and}
r_2 = k_2 -r_0 \ .
$$ 
We shall see that  the vector $\mu= (\,(v_1^2 +v_2 ^2)\, \uno_{r_0} 
\coma v_1 ^2\, \uno_{r_1}\coma v_2 ^2\, \uno_{r_2}) $ satisfies 
Eq. \eqref{lav may}, so that $\la_\v(2\coma \k \coma d) = \mu$ 
by \ref{con mayo}. Moreover, 
the minimizers are those  systems $\cV = (V_1 \coma V_2) \in \PRO_\v$ 
such that the two projections $P_i = v_i^{-2}\,V_i^*V_i$ (for $i = 1,2$) commute. 

\pausa
Indeed, if $\ese_i = R(P_i) = R(V_i^*) $ for $i = 1,2$, then $\eme _0 = \ese_1\cap \ese_2$ 
has $\dim \eme_0 = r_0\, $. Also $\eme_i = \ese_i \ominus \eme_0$ have $\dim \eme_i = r_i$ 
for $i = 1,2$. Hence 
$\C^d = \eme_0 \perp (\eme_1 \oplus  \eme_2)$ and 
$$
S_\cV = v_1^2\, P_1 + v_1^2\, P_2\, = \,(v_1^2 +v_2 ^2)\, 
P_{\eme_0} + v_1^2\, P_{\eme_1} + v_1^2\, P_{\eme_2} \ .
$$ 
Note that $\eme_1 \perp  \eme_2 \iff  
 P_1\,P_2 = P_2 \,P_1 = P_{\eme_0}\,$. In this case  
$\la(S_\cV) = \mu$. Otherwise, still 
$S_\cV\big|_{\eme_0} = 
(v_1^2 +v_2 ^2) \, I_{\eme_0} $ and $S_\cV(\eme_1 \oplus  \eme_2) =
\eme_1 \oplus  \eme_2 \, $. Hence, if 
we denote by $T = S_\cV\big|_{\eme_1 \oplus  \eme_2} = 
(v_1^2\, P_{\eme_1} + v_1^2\, P_{\eme_2})\big|_{\eme_1 \oplus  \eme_2} 
\in \mathcal{G}\textit{l}(\eme_1 \oplus  \eme_2)^+
\,$, then 
$\|T\|_{sp} \le v_1^2 +v_2 ^2$ and 
$$
S_\cV = \bm{cc} (v_1^2 +v_2 ^2)\, I_{r_0} & 0 \\ 0& T\em   \barr{ll}\eme_0 \\
\eme_0\orto\earr  \peso{with}
\la(S_\cV) = (\,(v_1^2 +v_2 ^2)\, \uno_{r_0} \coma \la (T)\, ) \in (\R_{>0}^d)^\downarrow \ . 
$$
Using Example \ref{E m R} for the space $\eme_1 \oplus  \eme_2\,$, 
we can deduce that  $(v_1^2\, \uno_{r_1}\coma v_2^2\, \uno_{r_2}) 
\prec \la(T)$. Therefore 
also $\mu = (\,(v_1^2 +v_2 ^2)\, \uno_{r_0} 
\coma v_1 ^2\, \uno_{r_1}\coma v_2 ^2\, \uno_{r_2})\prec 
(\,(v_1^2 +v_2 ^2)\, \uno_{r_0}   \coma \la (T)\, ) = \la(S_\cV)$. 
\EOE
\end{exa}

\begin{exa} Let $m=3$, $d=4$, $\k = (3\coma 2\coma 2) $ and $\v= \uno_3\,$. 
Denote by $\cE = \{e_i : i\in \IN{4}\}$ the canonical basis of $\C^4$. Then 
$\la_\uno(3\coma \k \coma 4) = (2\coma 2\coma \frac 32\coma \frac 32\,)$ 
and a minimizer is given by any system $\cV = 
\{V_i\}_{i\in \, \IN{3}} \in \PRO_\uno$ such that 
the subspaces $\ese_i = R(V_i^*)$ for $i \in \IN{3}$ are  
$$
\ese_1 = \gen\{e_1\coma e_2\coma e_3\} \quad , \quad  
\ese_2 = \gen \big\{\, e_1\coma w_2\, \big\}
\peso{and} 
\ese_3 = \gen\big\{\, e_2\coma w_3 \, \big\} \ , 
$$
where $w_2 = \frac{-e_3}{2} + \frac{\sqrt{3}\, e_4}{2}$ and 
$w_3= \frac{-e_3}{2} - \frac{\sqrt{3}\, e_4}{2}\ $. 
The fact that 
$\la(\ese_\cV) = (2\coma 2\coma \frac 32\coma \frac 32\,)$
for such a system $\cV$  is a direct computation. 
On the other hand, 
if $\cW= \{W_i\}_{i\in \, \IN{3}}\in \PRO_\uno(3\coma \k \coma 4)\,$, then there exist
unit vectors $x_2 \in R(W_1^*)\cap R(W_2^*)$ and $x_3 \in R(W_1^*)\cap R(W_3^*)$. 

\pausa
Denote by $\ete = \gen \{x_2\coma x_3\}$. If $\dim \ete = 1$ then 
$\la_1(S_\cW) \ge \api S_\cW\, x_2\coma x_2\cpi = 3$ and 
$\la_2(S_\cW) \ge 1$.  If $\dim \ete = 2$,  
using that $\ete\inc R(W_1^*)$ and 
 $x_i \in R(W_i^*)$ for $i=2,\,3$, we get 
$$
\barr{rl}
\la_1(S_\cW) + \la_2(S_\cW) &\ge \ 
 \sum_{i\in \, \IN{3}} \tr \big( P_\ete \, W_i^*W_i \, P_\ete\big)  \ge 
 \tr P_\ete + \tr P_{\gen\{x_2\}} + \tr P_{\gen\{x_3\}} = 4 \ . 
 \earr
$$
In any case, we have shown that  $(2\coma 2)\prec_w 
\al = (\la_1(S_\cW) \coma \la_2(S_\cW)\,)$. 
Therefore, using Lemma \ref{lem may}
we get that $(2\coma 2\coma \frac 32\coma \frac 32\,) \prec 
\la(S_\cW)\,$. Now,  apply  \ref{con mayo}. 

\pausa
The minimizers $\cV\in \PROv$ such that 
$\la(S_\cV) = (2\coma 2\coma \frac 32\coma \frac 32\,)$
have some interestig properties. For example they are the sum of 
two tight systems, $\cV^\#$ is not projective, and the 
involved projections do not commute. More precisely, the cosine of 
the Friedrich angles of their images 
are $c(\ese_i \coma \ese_j ) = \frac 12 $
for every $i \neq j$. 
\EOE
\end{exa}

\fontsize {9}{9}\selectfont

\noi {\bf Pedro Massey, Mariano Ruiz and Demetrio Stojanoff}

\noi Depto. de Matem\'atica, FCE-UNLP,  La Plata, Argentina
and IAM-CONICET  

\noi e-mail: massey@mate.unlp.edu.ar , maruiz@mate.unlp.edu.ar,  demetrio@mate.unlp.edu.ar

\end{document}